\documentclass[11pt]{article}
\usepackage[textwidth=150mm,textheight=220mm,centering]{geometry}
\usepackage{tikz}
\usepackage{xcolor}
\usepackage{mathrsfs}
\usepackage{amsfonts,amssymb}
\usepackage{amsmath}
\usepackage{cite}
\numberwithin{equation}{section}
\allowdisplaybreaks[4]
\newtheorem{Theorem}{Theorem}[section]
\newtheorem{Proposition}[Theorem]{Proposition}
\newtheorem{Corollary}[Theorem]{Corollary}
\newtheorem{Lemma}[Theorem]{Lemma}
\newtheorem{Example}[Theorem]{Example}

\def\bbZ{\mathbb{Z}}
\def\bbR{\mathbb{R}}

\begin{document}
\title{Hardy-Littlewood-Sobolev Inequality on Mixed-Norm Lebesgue Spaces\thanks{This work was partially supported by the
National Natural Science Foundation of China (11525104, 11531013, 11761131002, 11801282 and 11926342).}}

\author{Ting Chen\quad and \quad Wenchang Sun\\
\small School of Mathematical Sciences and LPMC,  Nankai University, Tianjin~300071,   China \\
Emails:\,
t.chen@nankai.edu.cn;\quad sunwch@nankai.edu.cn}

\date{}
\maketitle

\textbf{Abstract}.\,
We consider the Hardy-Littlewood-Sobolev inequality on mixed-norm Lebesgue spaces.
We give a complete characterization of
indices $\vec p$ and $\vec q$ such that the Riesz potential is bounded from $L^{\vec p}$ to $L^{\vec q}$,
including all the endpoint cases.
As a result, we get the mixed-norm
Hardy-Littlewood-Sobolev inequality.

\textbf{Keywords}.\,\,
Riesz potentials; Fractional integrals; mixed norms; Hardy-Littlewood-Sobolev inequality.

\section{Introduction and The Main Results}
Let $n$ be a positive integer and  $0<\lambda<n$. The  Riesz potential
of a measurable function $f$ on $\bbR^n$  is defined by
\[
   \int_{\bbR^n} \frac{f(y)}{|x-y|^{\lambda}} dy.
\]
The boundedness of the Riesz potential  can be found in many textbooks, e.g.,
see
\cite[Theorem 1.2.3]{Grafakos2014m},
\cite[Proposition 7.8]{Muscalu2013i}
or
\cite[Chapter 5.1]{Stein1970}.

\begin{Proposition}\label{prop:fractional}
Let   $0<\lambda<n$
and
$1\le p,q \le \infty$.
Then  the Riesz potential is bounded from
  $L^p(\bbR^n)$  to $L^q(\bbR^n)$ if and only if $1<p<q<\infty$ and 
$\lambda=n/p'+n/q$.
\end{Proposition}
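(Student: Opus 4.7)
The plan is to treat necessity and sufficiency separately. For necessity, I would first pin down the scaling relation $\lambda=n/p'+n/q$ by a dilation argument: replacing $f(y)$ by $f(y/t)$ multiplies the $L^p$ norm by $t^{n/p}$ and the Riesz potential at $x$ becomes $t^{n-\lambda}(I_\lambda f)(x/t)$, whose $L^q$ norm is $t^{n-\lambda+n/q}\|I_\lambda f\|_q$. Matching exponents of $t$ forces $n/p=n-\lambda+n/q$, i.e.\ $\lambda=n/p'+n/q$. Necessity of $p>1$ follows by plugging in a non-negative radially decreasing $L^1$-bump $f$ and observing that $I_\lambda f(x)\sim |x|^{-\lambda}$ at infinity, which fails to lie in $L^q$ for the Sobolev exponent $q=n/(n-\lambda)$; equivalently one gets only a weak-type but not strong-type bound at $p=1$. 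Necessity of $q<\infty$ (equivalently $p>1$ on the dual side) is analogous, using a function like $f(y)=|y|^{-n/p}(\log 1/|y|)^{-\alpha}\chi_{\{|y|<1/2\}}$ with $\alpha$ chosen so that $f\in L^p$ but $I_\lambda f(0)=\infty$.

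For sufficiency, I would use Hedberg's pointwise inequality. Split
\[
I_\lambda f(x)=\int_{|x-y|<R}\frac{f(y)}{|x-y|^\lambda}\,dy+\int_{|x-y|\ge R}\frac{f(y)}{|x-y|^\lambda}\,dy.
\]
The first piece is bounded by a constant times $R^{n-\lambda}Mf(x)$ via a dyadic annular decomposition, where $Mf$ is the Hardy-Littlewood maximal function. The second piece is bounded by H\"older by a constant times $R^{n-\lambda-n/p}\|f\|_p$ (this is where the condition $\lambda>n/p$, equivalent to $q<\infty$, enters). Optimizing in $R$ yields
\[
|I_\lambda f(x)|\lesssim (Mf(x))^{p/q}\|f\|_p^{1-p/q}.
\]
Combined with the weak $(p,p)$ bound for $M$ (valid for $p>1$), this gives the weak-type $(p,q)$ inequality $\|I_\lambda f\|_{q,\infty}\lesssim\|f\|_p$ at every admissible $p$.

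To upgrade from weak-type to strong-type, I would apply the Marcinkiewicz interpolation theorem between two weak-type estimates corresponding to two nearby pairs $(p_0,q_0)$ and $(p_1,q_1)$ with $p_0<p<p_1$ on the scaling line $n/p'+n/q=\lambda$; both endpoints lie in the admissible range since $1<p<\infty$, so such $p_0,p_1$ exist. The main obstacle is really the sharp pointwise bound in Hedberg's step — one needs the dyadic annular estimate carried out carefully so that the constants depend only on $n,\lambda,p$. Since Proposition \ref{prop:fractional} is the classical unweighted HLS inequality, one may alternatively appeal directly to \cite[Theorem 1.2.3]{Grafakos2014m} or \cite[Chapter 5.1]{Stein1970}, but the argument above is the one I would reproduce if a self-contained proof is required.
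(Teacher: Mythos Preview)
Your argument is correct and self-contained, whereas the paper does not prove Proposition~\ref{prop:fractional} at all: it is stated as a classical fact with references to \cite[Theorem 1.2.3]{Grafakos2014m}, \cite[Proposition 7.8]{Muscalu2013i}, and \cite[Chapter 5.1]{Stein1970}. Your necessity arguments (dilation for the scaling relation, an $L^1$ bump to exclude $p=1$, a logarithmic singularity to exclude $q=\infty$) and Hedberg's pointwise inequality for sufficiency are exactly the standard textbook approach, so there is no substantive divergence to discuss. Two minor points: in the parenthetical after the far-piece estimate, the condition should read $\lambda>n/p'$ (not $\lambda>n/p$), since the kernel $|x-y|^{-\lambda}\chi_{\{|x-y|\ge R\}}$ lies in $L^{p'}$ precisely when $\lambda p'>n$, and this is what is equivalent to $q<\infty$; your exponent $n-\lambda-n/p=n/p'-\lambda$ is correct, only the stated inequality is mistyped. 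Second, the detour through weak-type and Marcinkiewicz is unnecessary: since $M$ is of \emph{strong} type $(p,p)$ for $p>1$, Hedberg's bound $|I_\lambda f|\lesssim (Mf)^{p/q}\|f\|_p^{1-p/q}$ already gives $\|I_\lambda f\|_q^q\lesssim \|Mf\|_p^p\,\|f\|_p^{q-p}\lesssim\|f\|_p^q$ directly.
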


An equivalent statement of the boundedness of the Riesz potential
is the Hardy-Littlewood-Sobolev inequality. Specifically,
for any $f\in L^p$ and $g\in L^q$ with $1<p,q<\infty$ and $1/p+1/q>1$, we have
\[
   \left|\int_{\bbR^n} \frac{f(y)g(x)dxdy}
   {|x-y|^{n/p'+n/q' }}\right|
   \lesssim \|f\|_{L^p} \|g\|_{L^q}.
\]

In this paper, we focus on the Hardy-Littlewood-Sobolev inequality on
mixed-norm Lebesgue spaces.

Given a multi-index  $\vec p = (p_1,\ldots,p_m)$
with $0<p_i\le \infty$,
the mixed-norm Lebesgue space  $L^{\vec p}(\bbR^{n_1}\times \ldots \times
\bbR^{n_m}\!)$ consists of all measurable functions $f\!(x_1,\ldots,x_m)$ defined on $\bbR^{n_1}\times \ldots \times
\bbR^{n_m}$ for which
\[
  \|f\|_{L^{\vec p}} :=   \Big\|  \| f\|_{L_{x_1}^{p_1}} \cdots \Big\|_{L_{x_m}^{p_m}}<\infty.
\]
For convenience,  we also write the $L^{\vec p}$
norm as
$\|\cdot\|_{L_{x_m}^{p_m}(\ldots (L_{x_1}^{p_1}))}$ or $\|\cdot\|_{L_{(x_1,\ldots,x_m)
}^{(p_1,\ldots,p_m)}}$.

Mixed-norm Lebesgue spaces are widely used in the study of
partial differential equations.
Many works have been
done since Benedek and Panzone~\cite{Benedek1961} introduced
the mixed-norm Lebesgue spaces.
It was shown that mixed-norm Lebesgue spaces
share similar properties with the ordinary Lebesgue spaces. Many classical operators are bounded on these spaces. We refer the readers to
\cite{Benedek1962,
Fernandez1987,
Hart2018,
Hormander1960,
Kurtz2007,
Rubio1986,
Stefanov2004,
Torres2015} for details.
Other generalizations on mixed-norm spaces, which include
Hardy spaces, Trieble-Lizorkin spaces, Besov spaces and Sobolev spaces can be
found in \cite
{
BandaliyevSerbetci2018,
Boggarapu2017,
CarneiroOliveiraSousa2019,
ChenSun2017,
Ciaurri2017,
Cleanthous2017,
Cordoba2017,
Georgiadis2017,
Ho2018,
HuangLiuYangYuan2018,
HuangLiuYangYuan2019,
Johnsen2015,
KarapetyantsSamko2018,
Lechner2018,
LiStinga2017,
Sandik2018,
WeiYan2018}.
See also a survey paper \cite{HuangYang2019} on aspects of mixed-norm spaces.

Given $0<\lambda<N_m$ and $f\in L^{\vec p}$,
where
\[
    N_m = n_1+\ldots +n_m,
\]
 the Riesz potential of $f$ is defined by
\[
  I_{\lambda}f(x_1,\ldots,x_m) =
           \int_{\bbR^{N_m}} \frac{f(y_1,\ldots,y_m)}{(\sum_{i=1}^m |x_i-y_i|)^{\lambda}} dy_1\ldots dy_m.
\]
A natural problem arises: find all indices $\vec p$ and $\vec q$
such that $I_{\lambda}$ is bounded from $L^{\vec p}$ to $L^{\vec q}$.

In 1961,
  Bededek and   Panzone    \cite{Benedek1961} showed that for
$\vec p=(p_1,\ldots,p_m)$ and $\vec q=(q_1,\ldots,q_m)$
satisfying $1<p_i<q_i<\infty$, $1\le i\le m$,
$I_{\lambda}$ is bounded from $L^{\vec p}$ to $L^{\vec q}$, where $\lambda = \sum_{i=1}^m (n_i/q_i+n_i/p'_i)$.

In 1974, Adams and Bagby \cite{AdamsBagby1974} showed that
if
\[
  \begin{cases}
  1\le p_i\le q_i\le \infty, & 1\le i\le m-1,\\
   1<p_m<q_m<\infty
  \end{cases}
\]
or
\[
  \begin{cases}
   1\le p_i\le q_i\le \infty, & 1\le i\le v,\\
   1<p_v<q_v<\infty, & \\
   1<p_i=q_i<\infty, &v+1\le i\le m
  \end{cases}
\]
for some $1\le v<m$,
then $I_{\lambda}$ is bounded from $L^{\vec p}$ to $L^{\vec q}$.

For the necessary conditions, they showed that whenever
$1<p_i<\infty$ and $q_i\ge 1$ for all $1\le i\le m$,
it is necessary that $p_i\le q_i$.
Moreover, for the case $m=2$, they also illustrated that $I_{\lambda}$ is not bounded if $\vec p$ and $\vec q$ meet one of the followings,

(i)\,\,  $q_1=\infty$ and $p_2=q_2$.

(ii)\,\, $p_1\le q_1 <p_2\le q_2=\infty$.

In this paper, we give a complete characterization of indices
$\vec p$ and $\vec q$ such that $I_{\lambda}$ is bounded from $L^{\vec p}$ to $L^{\vec q}$.

Before stating the result, we introduce some notations.

Define $\Gamma_{\lambda,m}$ recursively as follows.
$\Gamma_{\lambda,1}$ consists of all vectors
$(p_1,q_1)$ for which $1<p_1<q_1<\infty$ and $1/p_1=1/q_1 + (n_1-\lambda)/n_1$.

For $m\ge 2$,  $\Gamma_{\lambda,m}$ consists of all
vectors $(\vec p, \vec q)$ which satisfy the homogeneity condition
 \begin{equation}\label{eq:homo:m}
  \frac{n_1}{p_1}+\ldots + \frac{n_m}{p_m}
  =
  \frac{n_1}{q_1}+\ldots + \frac{n_m}{q_m}
  +N_m-\lambda,
\end{equation}
 $1\le p_i\le q_i\le \infty$ for all $1\le i\le m$,
and  one of the following conditions,
\begin{enumerate}
\item[(T1)]
$1<p_m<q_m<\infty$,

\item[(T2)]
 $ p_m=1$, $q_m<\infty$ and there is some $1\le i_1\le m-1$ such that
  $1< p_{i_1}<q_{i_1}$,
  $p_i=1$ or  $p_i=q_i $   for all
   $i_1+1\le i\le m-1$, and
   $q_m\ge p_i$ for all
   $i_1 \le i\le m $,

\item[(T3)]
 $p_m>1$, $q_m=\infty$
  and there is some $1\le i_2\le m-1$ such that
    $p_{i_2}<q_{i_2}<\infty$,
    $q_i=\infty$ or $q_i=p_i $ for all $i_2+1\le i\le m-1$,
    and $p_m\le q_i$ for all $i_2 \le i\le m $,

\item[(T4)]
  $p_m=1$, $q_m=\infty$ and $(p_1,\ldots,p_{m-1},q_1,\ldots,q_{m-1})\in \Gamma_{\lambda,m-1}$,

\item[(T5)]
  $1<p_m=q_m<\infty$ and $(p_1,\ldots,p_{m-1},q_1,\ldots,q_{m-1})\in \Gamma_{\lambda-n_m,m-1}$.

\end{enumerate}

Our main result is the following.

\begin{Theorem}\label{thm:mixed fractional}
Suppose that $0<\lambda<N_m$, $\vec p=(p_1,\ldots, p_m)$
and $\vec q = (q_1, \ldots, q_m)$
with $0<p_i, q_i\le \infty$, $1\le i\le m$.
Then  $I_{\lambda}$ is bounded from $L^{\vec p}$
to $L^{\vec q}$ if and only if $(\vec p, \vec q)\in \Gamma_{\lambda,m}$.
\end{Theorem}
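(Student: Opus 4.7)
The plan is to argue by induction on $m$, taking Proposition~\ref{prop:fractional} as the base case $m=1$ and exploiting the recursive definition of $\Gamma_{\lambda,m}$ via the rules (T1)--(T5) in the inductive step. Both directions of the equivalence must be carried through at each step.

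For \emph{sufficiency}, I would treat the five rules one at a time. Case (T5), with $p_m=q_m$, admits the cleanest reduction: for fixed $(x_1,\ldots,x_{m-1})$ the operator $I_\lambda$ acts in $y_m$ as a convolution against the kernel $(a+|x_m-y_m|)^{-\lambda}$ with $a=\sum_{i<m}|x_i-y_i|$, whose $L^1_{y_m}$ norm is comparable to $a^{n_m-\lambda}$ (since $\lambda>n_m$ whenever $\Gamma_{\lambda-n_m,m-1}$ is nonempty); Young's inequality in $x_m$ together with Minkowski's integral inequality collapses the problem to an $(m-1)$-variable Riesz potential of order $\lambda-n_m$, closed by the inductive hypothesis for $\Gamma_{\lambda-n_m,m-1}$. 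Case (T4), with $(p_m,q_m)=(1,\infty)$, is analogous: dropping $|x_m-y_m|$ from the kernel bounds $|I_\lambda f|$ pointwise by an $(m-1)$-variable Riesz potential of order $\lambda$ applied to $\int|f|\,dy_m$, reducing to $\Gamma_{\lambda,m-1}$. The generic case (T1), with $1<p_m<q_m<\infty$, is more subtle: the naive splitting $(\sum|x_i-y_i|)^{-\lambda}\le(\sum_{i<m}|x_i-y_i|)^{-\mu}|x_m-y_m|^{\mu-\lambda}$ followed by one-dimensional HLS in $x_m$ and an $(m-1)$-dimensional estimate does not close, because the matching $\mu$ typically forces $(\vec p',\vec q')\notin\Gamma_{\mu,m-1}$ (for instance when some $q_i=\infty$ with $i<m$); a finer argument is needed, combining a vector-valued HLS in the $m$-th slot with a case analysis on the regime of each preceding slot (strict HLS, diagonal $p_i=q_i$, or endpoint $p_i\in\{1,\infty\}$), or a pointwise bound of $I_\lambda f$ by a strong maximal function to which a mixed-norm estimate can be applied. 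Cases (T2) and (T3), the one-sided endpoints with a distinguished interior index $i_1$ or $i_2$, rely on weak-type estimates and Marcinkiewicz-style interpolation in the mixed-norm setting, with the side conditions $q_m\ge p_i$ and $p_m\le q_i$ supplying exactly the room that the interpolation requires.

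For \emph{necessity}, the homogeneity identity~\eqref{eq:homo:m} is forced by the dilation $f_t(y)=f(ty)$ through a comparison of $t$-exponents, and the coordinate-wise monotonicity $p_i\le q_i$ by dilation in only the $i$-th block of variables. To exclude every remaining $(\vec p,\vec q)$ outside $\Gamma_{\lambda,m}$ still compatible with these, I would test $I_\lambda$ against tensor products of characteristic functions of balls, or of Gaussians, at carefully tuned relative scales; whenever one of the combinatorial conditions embedded in (T2)--(T5) fails, the ratio $\|I_\lambda f\|_{L^{\vec q}}/\|f\|_{L^{\vec p}}$ can be driven to infinity along an appropriate limit, recovering and generalizing the Adams--Bagby counterexamples to arbitrary $m$. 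The principal technical obstacle I foresee lies in the sufficiency of (T1)--(T3): these force the proof to handle, simultaneously and in the correct order of mixed norms, one slot in the strict HLS regime alongside several slots with $p_i=q_i$ or $p_i\in\{1,\infty\}$, without access to the inductive hypothesis in the form one might naively hope for. Matching a counterexample to each excluded configuration on the necessity side is a parallel combinatorial burden.
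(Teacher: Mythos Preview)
Your assessment of the difficulty is inverted. You call (T5) the ``cleanest reduction'' and (T1) ``more subtle'', but in the paper it is the reverse: (T1) is dispatched in a few lines by iterated Minkowski and Young (take the innermost norm $L^{q_1}_{x_1}$ first, reduce to an $(m-1)$-variable kernel, continue until only $x_m$ remains, then apply the scalar Hardy--Littlewood--Sobolev inequality), while (T5) is by far the hardest case and occupies most of the sufficiency proof.

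Your (T5) argument has a genuine gap: the norm ordering is wrong. In $L^{\vec q}$ the $L^{q_m}_{x_m}$ norm is \emph{outermost}, so to use Young's inequality in $x_m$ with an $L^1$ kernel you must compute the $L^{q_m}_{x_m}$ norm \emph{first}. Doing so (Minkowski over the remaining $y$-variables, then Young) produces a bound for
\[
\big\|\,\|I_\lambda f\|_{L^{q_m}_{x_m}}\,\big\|_{L^{(q_1,\ldots,q_{m-1})}_{(x_1,\ldots,x_{m-1})}}
\quad\text{in terms of}\quad
\big\|\,\|f\|_{L^{p_m}_{y_m}}\,\big\|_{L^{(p_1,\ldots,p_{m-1})}_{(y_1,\ldots,y_{m-1})}},
\]
but neither side is the correct mixed norm, and Minkowski only lets you swap the order of norms when $q_m\ge q_i$ (on the left) and $p_m\le p_i$ (on the right) for all $i<m$, which is not assumed. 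The paper circumvents this with substantial machinery you do not mention: a Fefferman--Stein inequality for partial maximal functions on mixed-norm spaces (Proposition~\ref{prop:SF}), real interpolation of mixed-norm spaces together with the reiteration theorem (Theorem~\ref{thm:mixed Lp}, Proposition~\ref{prop:vector a}), and a Calder\'on--Zygmund theorem for operator-valued kernels on mixed-norm Lebesgue spaces (Proposition~\ref{prop:singular T}) to pass from one value of $p_m=q_m$ to all of $(1,\infty)$.

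Two smaller points on the necessity side. Single-coordinate dilation does not yield $p_i\le q_i$, because $(\sum_j|x_j-y_j|)^{-\lambda}$ has no clean scaling under dilation in one block alone; the paper instead uses a translation argument (Lemma~\ref{Lm: translation}), comparing $\|I_\lambda(f+f(\cdot-z))\|_{L^{\vec q}}$ with $\|f+f(\cdot-z)\|_{L^{\vec p}}$ as $|z_i|\to\infty$ to force $2^{1/q_i}\le 2^{1/p_i}$. And the side conditions in (T2)--(T3), such as $q_m\ge p_i$ for $i_1\le i\le m$, are ruled out not with characteristic functions or Gaussians but with logarithmically weighted power functions tuned to sit just inside $L^{\vec p}$ (Lemma~\ref{Lm:pi}); tensor-product test functions are too crude to detect these borderline failures.
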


As a consequence,
we get the Hardy-Littlewood-Sobolev inequality for
mixed-norm Lebesgue spaces.

\begin{Theorem}\label{thm:HLS}
Let $\vec p=(p_1,\ldots, p_m)$
and $\vec q = (q_1, \ldots, q_m)$,
where $0<p_i, q_i  \le \infty$, $1\le i\le m$.
Then the  Hardy-Littlewood-Sobolev inequality
\[
 \Big | \int_{\bbR^{2N_m}}\!\!\!\! \frac{f(y_1,\ldots,y_m)g(x_1,\ldots,x_m)
     dy_1\ldots dy_m dx_1\ldots dx_m}
     {(\sum_{i=1}^m|x_i-y_i|)^{\sum_{i=1}^m (n_i/p'_i+n_i/q'_i)}}\Big|
 \lesssim \|f\|_{L^{\vec p}} \|g\|_{L^{\vec q}}
\]
is true
for any $f\in L^{\vec p}$ and $g\in L^{\vec q}$
if and only if $(\vec p, \vec q)\in \Omega_m$,
where $\Omega_m$ is defined recursively as follows.

$\Omega_1$ consists of all vectors
$(p_1,q_1)$ for which $1<p_1, q_1<\infty$ and $1/p_1+ 1/q_1 >1$.

For $m\ge 2$,  $\Omega_m$ consists of all
vectors $(\vec p, \vec q)$ which satisfy
 $1\le p_i, q_i\le \infty$,
 $1/p_i+1/q_i\ge 1$
 for all $1\le i\le m$,
and  one of the following conditions,
\begin{enumerate}
\item
$1<p_m, q_m<\infty$ and
$1/p_m + 1/q_m>1$,

\item
 $ p_m=1$, $q_m>1$, and there is some $1\le i_1\le m-1$ such that
  $ p_{i_1}>1$, $1/p_{i_1}+1/q_{i_1}>1$,
$p_i=1$ or $1/p_i +1/q_i=1$
for all $i_1+1\le i\le m-1$,
and $1/p_i + 1/q_m \ge 1$ for all $i_1 \le i\le m $,

\item
$p_m>1$, $q_m=1$,
  and there is some $1\le i_2\le m-1$ such that
    $q_{i_2}>1$, $1/p_{i_2}+1/q_{i_2}>1$,
    $q_i=1$ or $ 1/p_i+1/q_i=1$ for all $i_2+1\le i\le m-1$,
    and $1/p_m + 1/q_i\ge 1$ for all $i_2\le i\le m$,

\item
  $p_m=q_m=1$  and  $(p_1,\ldots,p_{m-1},q_1,\ldots,q_{m-1})\in \Omega_{m-1}$,

\item
$1< p_m, q_m<\infty$, $1/p_m+1/q_m=1$ and $(p_1,\ldots,p_{m-1},q_1,\ldots,q_{m-1})\in \Omega_{m-1}$.

\end{enumerate}
\end{Theorem}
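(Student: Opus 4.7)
The strategy is to reduce Theorem \ref{thm:HLS} to Theorem \ref{thm:mixed fractional} via duality. Write $\vec{q}' := (q'_1,\ldots,q'_m)$ and set
\[
   \mu = \sum_{i=1}^m \Big(\frac{n_i}{p'_i} + \frac{n_i}{q'_i}\Big),
\]
so the bilinear form on the left-hand side of the HLS inequality equals $\int_{\bbR^{N_m}}(I_\mu f)(x)\,g(x)\,dx$. Under the standing assumptions $1\le p_i,q_i\le\infty$ and $1/p_i+1/q_i\ge 1$ (outside which both sides fail routinely), the standard duality for mixed-norm Lebesgue spaces (Benedek--Panzone) shows that the HLS inequality holds uniformly for $f\in L^{\vec p}$ and $g\in L^{\vec q}$ if and only if $I_\mu\colon L^{\vec p}\to L^{\vec{q}'}$ is bounded. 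The forward direction is H\"older's inequality; the reverse recovers $\|I_\mu f\|_{L^{\vec{q}'}}$ as the supremum of the bilinear form over the unit ball of $L^{\vec q}$, the extremal $g$ being built coordinate-by-coordinate via sign functions in the directions where $q_i=\infty$.

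By Theorem \ref{thm:mixed fractional}, this boundedness is equivalent to $(\vec p,\vec{q}')\in\Gamma_{\mu,m}$. It therefore suffices to prove, by induction on $m$, the equivalence
\[
   (\vec p,\vec{q}')\in\Gamma_{\mu,m} \quad\Longleftrightarrow\quad (\vec p,\vec q)\in\Omega_m.
\]
Substituting $\lambda=\mu$ and $q_i\mapsto q'_i$ in \eqref{eq:homo:m} collapses the homogeneity condition to the tautology $\sum_{i=1}^m n_i(1/p_i+1/p'_i)=N_m$; this is precisely why $\Omega_m$ carries no explicit homogeneity constraint. Similarly, $p_i\le q'_i$ is the same as $1/p_i+1/q_i\ge 1$. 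The base case $m=1$ is immediate: $\Gamma_{\mu,1}$ requires $1<p_1<q'_1<\infty$, which is exactly $1<p_1,q_1<\infty$ together with $1/p_1+1/q_1>1$, i.e.\ the definition of $\Omega_1$.

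For the inductive step, one matches the five alternatives (T1)--(T5) of $\Gamma_{\mu,m}$ with conditions (1)--(5) of $\Omega_m$. Under $q_i\mapsto q'_i$: the strict inequality $p<q'$ reads as $1/p+1/q>1$; the equality $p=q'$ reads as $1/p+1/q=1$; the endpoint $q'=\infty$ reads as $q=1$. These translations immediately convert (T1)--(T3) into conditions (1)--(3). For (T4), $p_m=1$ and $q'_m=\infty$ force $n_m/p'_m=n_m/q'_m=0$, so $\mu$ restricted to the first $m-1$ coordinates is precisely the exponent needed to invoke the inductive hypothesis on $\Omega_{m-1}$. For (T5), the relation $1/p_m+1/q_m=1$ yields $n_m/p'_m+n_m/q'_m=n_m$, whence $\mu-n_m$ is the exponent required by $\Omega_{m-1}$ for the reduced tuple, and the induction closes.

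The only delicate point is the duality extraction of $\|I_\mu f\|_{L^{\vec{q}'}}$ when several $q_i$ equal $\infty$; this is handled by pairing $I_\mu f$ successively against $\pm 1$-valued sign functions in the $\infty$-coordinates and standard $L^{q_i}$--$L^{q'_i}$ extremizers in the remaining ones, layer by layer. Everything else is a bookkeeping translation of cases already carried out in verifying the base case and the five alternatives above.
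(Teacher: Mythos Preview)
Your approach is essentially the same as the paper's: reduce the HLS inequality to the boundedness of $I_\mu\colon L^{\vec p}\to L^{\vec q\,'}$ by duality, invoke Theorem~\ref{thm:mixed fractional}, and verify by a case-by-case translation that $(\vec p,\vec q)\in\Omega_m$ is the same as $(\vec p,\vec q\,')\in\Gamma_{\mu,m}$. The paper's proof in \S5.3 takes this route almost verbatim, though it treats the $\Omega_m\leftrightarrow\Gamma_{\mu,m}$ dictionary as evident and does not spell it out.

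There is one place where your emphasis is reversed relative to the paper, and it hides the only genuinely new argument needed. You dispose of the range $p_i<1$ (or $q_i<1$) with the phrase ``both sides fail routinely,'' but this is exactly the step the paper isolates and proves. The difficulty is that when $p_{i_0}<1$, the duality $L^{\vec q}\leftrightarrow L^{\vec q\,'}$ you rely on to pass between HLS and the operator bound is not available to rule things out; instead, the paper fixes the remaining variables, observes that the HLS inequality would produce a nonzero bounded linear functional on $L^{p_{i_0}}$, and derives a contradiction from $(L^{p_{i_0}})^*=\{0\}$ for $p_{i_0}<1$. This is short but not entirely automatic, and your write-up should include it (and its symmetric counterpart for $q_{i_0}<1$). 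Once $p_i,q_i\ge 1$ is established, your duality-plus-dictionary argument goes through as written; the ``delicate point'' you flag about $q_i=\infty$ is in fact handled by the Benedek--Panzone norming identity the paper already cites, so no extra work is needed there.
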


To prove Theorem~\ref{thm:mixed fractional},
the main ingredients include
the
Calder\'on-Zygmund theory for
  mixed-norm Lebesgue spaces
and
 the theory of interpolation spaces,
where the reiteration theorem and various formulas  for  interpolation spaces play
an important role.

The paper is organized as follows.
In Section 2, we introduce the Fefferman-Stein inequality
for mixed-norm Lebesgue spaces.
In Section 3, we introduce the Bochner space
and give the difference between Bochner spaces
and mixed-norm Lebesgue spaces.
In Section 4, we study the interpolation between  mixed-norm Lebesgue
spaces.
We prove that for a two-variable measurable function, it is measurable with respect to one variable whenever the
norm of an interpolation space is applied to the other variable,
based on which we show that
some interpolation formulas for Bochner spaces
are also true for mixed-norm Lebesgue spaces.
In Section 5, we give proofs  for the main results.

\textbf{Notations}.
In this paper, $L^{(p,q)}$ denotes the mixed-norm Lebesgue space
$L^q(L^p)$, while $L^{p,q}$ denotes the  Lorentz space.

If there is a constant $C$ such that $A\le CB$, then we write
$A\lesssim B$. If $A\lesssim B$ and $B\lesssim A$, then we write
$A\approx B$.

\section{The Fefferman-Stein Inequality}
In this section, we introduce the Fefferman-Stein
inequality for the partial Hardy-Littlewood  maximal function.

Given a locally integral function $f$,
the Hardy-Littlewood maximal function $Mf$ is defined
by
\[
  Mf(x) = \sup_{ Q\ni x} \frac{1}{|Q|}
    \int_{Q} |f(y)| dy,
\]
where the supremum is taken over all cubes in $\bbR^n$
whose sides parallel to the axes.

It is well known that $M$ is bounded on $L^p$ when $1<p\le \infty$.
Fefferman and Stein proved a vector-valued version~\cite{FeffermanStein1971}.
Specifically,
for $1<p\le \infty$ and $1<q<\infty$,
\[
\Big\|
  \|\{M f_j:\, j\in \bbZ \}\|_{\ell_{\bbZ}^{p}}\Big\|_{L^q}
\le C_{  p, q}
\Big\|
  \|\{ f_j:\, j\in \bbZ \}\|_{\ell_{\bbZ}^{p}}\Big\|_{L^q},
\]
where $C_{p,q} = (1+1/(p-1))(q+1/(q-1))$.

Bagby~\cite{Bagby1975}  extended this inequality to a more general case.
For $1<p_i\le \infty$, $1\le i\le m$ and $1<q<\infty$,
it was shown that
\begin{equation}\label{eq:SF:e1}
\Big\|
  \|\{M f_j:\, j\in \bbZ^m\}\|_{\ell_{\bbZ^m}^{(p_1,\ldots,p_m)}}\Big\|_{L^q}
\le C_{\vec p, q}
\Big\|
  \|\{ f_j:\, j\in \bbZ^m\}\|_{\ell_{\bbZ^m}^{(p_1,\ldots,p_m)}}\Big\|_{L^q},
\end{equation}
where $C_{\vec p, q}$ is a constant depending on $\vec p$ and $q$.
Note that the above result was stated for the case
$1<p_i<\infty$ in \cite{Bagby1975}. But the proof is valid
for all $1< p_i\le \infty$, $1\le i\le m$.
See also \cite{Fernandez1987} for other generalizations of the Fefferman-Stein
inequality.

Here we present a continuous version of (\ref{eq:SF:e1}).

\begin{Proposition}\label{prop:SF}
Suppose that $1<p_i\le \infty$ and $1<q_j<\infty$, $1\le i\le m$
and $1\le j\le k$. We have
\begin{align}
  & \| M_{y_1}\ldots M_{y_k} f(x_1,\ldots,x_m,y_1,\ldots,y_k)\|_{
  L_{(x_1,\ldots,x_m,y_1,\ldots,y_k)}^{
    (p_1,\ldots,p_m,q_1,\ldots,q_k)}}\nonumber \\
  &\le C_{\vec p, \vec q}
  \| f(x_1,\ldots,x_m,y_1,\ldots,y_k)\|_{
  L_{(x_1,\ldots,x_m,y_1,\ldots,y_k)}^{
     (p_1,\ldots,p_m,q_1,\ldots,q_k)}}, \label{eq:Fefferman-Stein}
\end{align}
 where $C_{\vec p, \vec q}$ is a constant
and $M_y$ means the maximal function with respect to the
variable $y$, i.e.,
\[
  M_y f(x,y) = (M f(x, \cdot))(y).
\]
\end{Proposition}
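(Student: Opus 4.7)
The plan is to reduce Proposition~\ref{prop:SF} to Bagby's discrete inequality (\ref{eq:SF:e1}) via discretization of the $z$-variables, and then iterate the resulting single-variable estimate by induction on $k$.

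First, I would prove the following continuous analogue of (\ref{eq:SF:e1}) for a single maximal function: for any measurable $h(z_1,\ldots,z_M,y)$ with $z_i\in\bbR^{d_i}$, $y\in\bbR$, and indices $1<r_i\le\infty$, $1<s<\infty$,
\[
 \|M_y h\|_{L^{(r_1,\ldots,r_M,s)}_{(z,y)}}\le C\,\|h\|_{L^{(r_1,\ldots,r_M,s)}_{(z,y)}}.
\]
To derive this, I would partition each $\bbR^{d_i}$ into dyadic cubes $\{Q_{j_i}^{(N)}\}$ of side $2^{-N}$ and consider the averaged approximation $h^{(N)}(z,y)=\sum_{\vec j}h^{(N)}_{\vec j}(y)\,\chi_{Q^{(N)}_{\vec j}}(z)$, where $h^{(N)}_{\vec j}(y)$ is the mean of $h(\cdot,y)$ over the product cube $Q^{(N)}_{\vec j}=\prod_i Q^{(N)}_{j_i}$. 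A direct computation using the product structure yields
\[
 \|h^{(N)}(\cdot,y)\|_{L^{(r_1,\ldots,r_M)}_z}=2^{-N\sum_i d_i/r_i}\,\|\{h^{(N)}_{\vec j}(y)\}\|_{\ell^{(r_1,\ldots,r_M)}_{\vec j}},
\]
and the identical identity holds for $M_y h^{(N)}$, since $M_y$ commutes with multiplication by functions of $z$. Applying (\ref{eq:SF:e1}) to the sequence $\{h^{(N)}_{\vec j}(\cdot)\}$ and dividing through by the common factor therefore transfers the discrete inequality to $h^{(N)}$ with a constant independent of $N$. Letting $N\to\infty$, using Lebesgue's differentiation theorem for a.e.\ convergence $h^{(N)}\to h$, Fatou's lemma on the left, and dominated convergence on the right (with the strong maximal function in $z$ as dominator, itself bounded on the mixed $L^{(r_1,\ldots,r_M)}_z$ norm via another application of (\ref{eq:SF:e1})), recovers the single-variable estimate.

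Second, I would induct on $k$. The case $k=0$ is trivial. For the inductive step, since the operators $M_{y_j}$ act on disjoint variables and hence commute, $M_{y_1}\cdots M_{y_k}f=M_{y_k}(M_{y_1}\cdots M_{y_{k-1}}f)$. Because $L^{q_k}_{y_k}$ is the outermost layer of the target mixed norm, the single-variable estimate (applied with $z=(x_1,\ldots,x_m,y_1,\ldots,y_{k-1})$) gives
\[
 \|M_{y_k}(M_{y_1}\cdots M_{y_{k-1}}f)\|_{L^{(p_1,\ldots,p_m,q_1,\ldots,q_k)}}\le C\,\|M_{y_1}\cdots M_{y_{k-1}}f\|_{L^{(p_1,\ldots,p_m,q_1,\ldots,q_k)}}.
\]
Since $M_{y_j}$ for $j<k$ does not touch $y_k$, fixing $y_k$ reduces the right side to the $(k-1)$-fold iterated maximal applied to $(x,y_1,\ldots,y_{k-1})\mapsto f(x,y_1,\ldots,y_{k-1},y_k)$; the inductive hypothesis provides a pointwise-in-$y_k$ bound, and taking the outer $L^{q_k}_{y_k}$ norm closes the induction.

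The main technical obstacle is the $N\to\infty$ limit in the first step: although $h^{(N)}\to h$ a.e., $M_y h^{(N)}$ need not converge pointwise to $M_y h$---only a Fatou-type lower bound is immediate. I would handle this by first verifying the inequality for bounded compactly supported $h$ (or, equivalently, simple functions supported on finite unions of dyadic rectangles, for which the discretization becomes exact once $N$ is sufficiently large) and then extending to general $h$ by a density/approximation argument that exploits the sublinearity of $M_y$.
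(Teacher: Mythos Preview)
Your induction on $k$ matches the paper's exactly. The difference is entirely in the single-variable step $k=1$. The paper does \emph{not} go through Bagby's discrete inequality (\ref{eq:SF:e1}); instead it runs a separate induction on $m$, treating the endpoint $p_1=\infty$ directly, handling small $p_1$ via the weighted Fefferman--Stein inequality $\int (Mg)^{p_1}w\lesssim\int g^{p_1}Mw$ together with duality in the outer mixed norm, and filling in the intermediate $p_1$ by interpolation. Your route --- discretizing the inner variables so that (\ref{eq:SF:e1}) applies verbatim to the coefficient sequence, then passing to the limit --- is a genuinely different and arguably more direct reduction, since it treats the continuous inequality as a formal consequence of the cited discrete one rather than reproving it from scratch.

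One caveat: your proposed dominated-convergence step on the right, with the strong maximal function in $z$ as majorant, is circular as stated. Boundedness of the iterated maximal function $M_{z_M}\cdots M_{z_1}$ on $L^{(r_1,\ldots,r_M)}_z$ is precisely the $k=1$ case of the proposition with the roles of inner and outer variables reshuffled, so you cannot invoke it before the proposition is proved. Fortunately this is unnecessary: the dyadic averaging $h\mapsto h^{(N)}$ is a composition of conditional expectations, each of which is a contraction on the relevant $L^{r_i}$ and commutes (via Minkowski) with the inner norms, so $\|h^{(N)}\|\le\|h\|$ directly. Alternatively, your final paragraph already contains the clean fix: for nonnegative $h$ take nonnegative simple $h_n\uparrow h$, so $M_y h_n\uparrow M_y h$ by monotone convergence in each average, and the norm inequality passes to the limit by monotone convergence. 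Either way the argument closes without the strong maximal function.
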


\begin{proof}
First, we consider the case $k=1$. In this case,
(\ref{eq:Fefferman-Stein}) becomes
\begin{align}
  \| M_y f(x_1,\ldots,x_m,y)\|_{L_{(x_1,\ldots,x_m,y)}^{(p_1,\ldots,p_m,q)}}
  \le C_{\vec p, q}
  \|  f(x_1,\ldots,x_m,y)\|_{L_{(x_1,\ldots,x_m,y)}^{(p_1,\ldots,p_m,q)}}.
  \label{eq:SF:e2}
\end{align}
We prove  (\ref{eq:SF:e2}) by induction on $m$.
Note that it is true for $m=0$.
Assume that it is true for some $m\ge 0$.
Let us consider the case $m+1$.

Fix some $1<p_2,\ldots,p_m\le \infty$ and $1<q<\infty$.
For $p_1=\infty$, we see from the inductive assumption that
\begin{align}
   \| M_y f(x_1,\ldots,x_{m+1},y)\|_{L_{(x_1,\ldots,x_{m+1},y)
  }^{(p_1,\ldots,p_{m+1},
      q)}}\nonumber 
&\le
    \left \| M_y \|f(x_1,\ldots,x_{m+1},y)\|_{L_{x_1}^{\infty}}
       \right \|_{L_{(x_2,\ldots,x_{m+1},y)}^{(p_2,\ldots,p_{m+1},
      q)}} \nonumber \\
 & \lesssim
  \|  f(x_1,\ldots,x_{m+1},y)\|_{L_{(x_1,\ldots,x_{m+1},y)}^{
   (p_1,\ldots,p_{m+1},q)}}.
  \label{eq:SF:e3}
\end{align}

On the other hand, for any $1<p_1< \min\{p_2,\ldots,p_{m+1},q\}$,
denote
$\vec u = (p_2/p_1$, $\ldots$, $p_{m+1}/p_1$,
      $q/p_1)$.
Note that for any $g\in L^{p_1}$
and non-negative measurable functions $w$, we have
\begin{equation}\label{eq:ea:a10}
  \Big( \int_{\bbR^n} |Mg(x)|^{p_1} w(x) dx  \Big )^{1/p_1}
  \le C_{p_1}
  \Big( \int_{\bbR^n} |g(x)|^{p_1} Mw(x) dx  \Big )^{1/p_1},
\end{equation}
where $C_{p_1}$ is a constant (see \cite[Theorem 2.16]{Duoandikoetxea2001}).

For any $f\in L_{(x_1,\ldots,x_{m+1},y)}^{(p_1,\ldots,p_{m+1},
      q)}$, we have
\begin{align}
 & \| M_y f(x_1,\ldots,x_{m+1},y)\|^{p_1}_{
    L_{(x_1,\ldots,x_{m+1},y)}^{(p_1,\ldots,p_{m+1},
      q)}}\nonumber \\
&=
    \left \| \|M_y f(x_1,\ldots,x_{m+1},y)\|^{p_1}_{L_{x_1}^{p_1}}
        \right\|_{L_{(x_2,\ldots,x_{m+1},y)}^{\vec u}} \nonumber \\
&= \! \sup_{\|h\|_{L^{\vec u\ \!\!'}}=1}
    \int \!\! | M_yf(x_1, \ldots, x_{m+1},y)|^{p_1}
        h(x_2, \ldots, x_{m+1},y) dx_1\!\ldots\! dx_{m+1} dy .
        \label{eq:ea:a11}
\end{align}
Applying Fubini's theorem and (\ref{eq:ea:a10}), we get
\begin{align}
&      \int | M_yf(x_1,\ldots,x_{m+1},y)|^{p_1}
        |h(x_2,\ldots,x_{m+1},y)| dx_1\ldots dx_{m+1} dy \nonumber \\
 &\lesssim
        \int | f(x_1,\ldots,x_{m+1},y)|^{p_1}
       M_y |h(x_2,\ldots,x_{m+1},y)| dx_1\ldots dx_{m+1} dy
       \nonumber \\
 &=   \int \| f(x_1,\ldots,x_{m+1},y)\|^{p_1}_{L_{x_1}^{p_1}}     M_y |h(x_2,\ldots,x_{m+1},y)| dx_2\ldots dx_{m+1} dy\nonumber
    \\
 &\le \left\|
\| f(x_1,\ldots,x_{m+1},y)\|^{p_1}_{L_{x_1}^{p_1}}
  \right\|_{L_{(x_2,\ldots,x_{m+1},y)}^{\vec u}}
       \|M_yh(x_2,\ldots,x_{m+1},y)\|_{L^{\vec u\ \!\!'}} ,
        \label{eq:ea:a17}
\end{align}
where we use H\"older's inequality in the last step.
By the inductive assumption,
\[
 \|M_yh(x_2,\ldots,x_{m+1},y)\|_{L^{\vec u\ \!\!'}}
 \lesssim
 \| h \|_{L^{\vec u\ \!\!'}}.
\]
It follows from (\ref{eq:ea:a11}) and (\ref{eq:ea:a17}) that
\begin{align}
 & \| M_y f(x_1,\ldots,x_{m+1},y)\|_{L_{(x_1,\ldots,x_{m+1},y)}^{(p_1,\ldots,p_{m+1},
      q)}}\nonumber \\
&\lesssim  \sup_{\|h\|_{L^{\vec u\ \!\!'}}=1}
\left\|
\| f(x_1,\ldots,x_{m+1},y)\|^{p_1}_{L_{x_1}^{p_1}}
  \right\|^{1/p_1}_{L_{(x_2,\ldots,x_{m+1},y)}^{\vec u}}
       \| h \|_{L^{\vec u\ \!\!'}}^{1/p_1}
        \nonumber \\
&= \|f\|_{L^{(p_1,\ldots,p_{m+1},q)}}.\nonumber
\end{align}
By the interpolation theorem for mixed-norm Lebesgue spaces
\cite[Theorem 7.2]{Benedek1961},
we get that   (\ref{eq:SF:e3}) is true for all
$1< p_1,\ldots,p_{m+1}\le \infty$ and $1<q<\infty$.
By induction, (\ref{eq:SF:e2}) is true for all $m\ge 0$.

Now assume that (\ref{eq:Fefferman-Stein}) is true for some $k\ge 1$.
For the case $k+1$, we see from the inductive assumption
that for any $y_{k+1}$,
\begin{align*}
  & \| M_{y_1}\ldots M_{y_k} M_{y_{k+1}} f(x_1,\ldots,x_m,y_1,\ldots,y_k,y_{k+1})\|_{
  L_{(x_1,\ldots,x_m,y_1,\ldots,y_k )}^{
    (p_1,\ldots,p_m,q_1,\ldots,q_k )}}  \nonumber
    \\
 &
  \lesssim
 \|  M_{y_{k+1}} f(x_1,\ldots,x_m,y_1,\ldots,y_k,y_{k+1})\|_{
  L_{(x_1,\ldots,x_m,y_1,\ldots,y_k  )}^{
    (p_1,\ldots,p_m,q_1,\ldots,q_k )}}.
\end{align*}
It follows from (\ref{eq:SF:e2}) that
\begin{align*}
  & \| M_{y_1}\ldots M_{y_k} M_{y_{k+1}} f(x_1,\ldots,x_m,y_1,\ldots,y_k,y_{k+1})\|_{
  L_{(x_1,\ldots,x_m,y_1,\ldots,y_k,y_{k+1} )}^{
    (p_1,\ldots,p_m,q_1,\ldots,q_k,q_{k+1} )}}  \nonumber
    \\
 &
  \lesssim
  \Big \|
 \|  M_{y_{k+1}} f(x_1,\ldots,x_m,y_1,\ldots,y_k,y_{k+1})\|_{
  L_{(x_1,\ldots,x_m,y_1,\ldots,y_k  )}^{
    (p_1,\ldots,p_m,q_1,\ldots,q_k )}} \Big\|_{L_{y_{k+1}}^{q_{k+1}}}\\
&\lesssim
 \|   f \|_{  L^{
    (p_1,\ldots,p_m,q_1,\ldots,q_k,q_{k+1})}}.
\end{align*}
By induction,  (\ref{eq:Fefferman-Stein}) is true for any $m\ge 0$ and $k\ge 1$.
This completes the proof.
\end{proof}

\section{Bochner and Mixed-Norm Lebesgue Spaces}
Let $X$ be a quasi-normed vector space.
For any $0< p\le \infty$, we denote by $\mathcal L^p(X)$  the Bochner space
consisting of all strongly measurable functions $f$ which
are defined on $\bbR^n$ and take values in a separable subspace
of $X$ such that
$x\mapsto \|f(x)\|_X$ belongs to the ordinary $L^p$ space.

Recall that a function $f$: $\bbR^n\rightarrow X$ is strongly measurable \cite{Hytonen2016}
if there exists a sequence of functions
of the form $f_k = \sum_{i=1}^{r} a_i \chi^{}_{E_i}$,
where $a_i\in X$, $E_i$ are pairwise disjoint measurable
sets in $\bbR^n$ and are of finite measures, $1\le i\le r$ and $r$
is a positive integer, such that
$\lim_{k\rightarrow \infty} f_k(x) = f(x)$ for almost
all $x\in\bbR^n$.

To avoid confusion, we use the following convention:
$\mathcal L^p(X)$ (with a calligraphic letter $\mathcal L$ ) stands for the  Bochner spaces,
while all other compositions of function spaces,
such as $L^p(X)$ or $L^{p,q}(X)$, stand for mixed-norm spaces,
where $X$ is a quasi-normed vector space
consisting of some measurable functions defined on $\bbR^N$.

Suppose that $0<p\le \infty$
and $\vec u = (u_1,\ldots,u_m)$ with $0 <u_i\le\infty$, $1\le i\le m$.
We point out that the Bochner space $\mathcal L^p(L^{\vec u})$ is different from the mixed-norm
Lebesgue space $L^p(L^{\vec u})$.
In fact, we see from their definitions that the mixed-norm function space $L^p(L^{\vec u})$ consists of measurable functions $f(x,y)$ defined on $\bbR^{n_1+\ldots+n_m}\times \bbR^n$,
while the
Bochner space $\mathcal L^p(L^{\vec u})$
consists of functions $f(x,y)$ for which
$y\mapsto f(\cdot,y)$ is strongly measurable
from $\bbR^n$ to $L^{\vec u}$.
In other words, they
consist of functions with different measurability.
So they are different.

The following are two  examples.

\begin{Example}\label{Ex:ex1}\upshape
Set   $f = \chi^{}_{\{0\le x\le y\le 1\}}$.
It is easy to see that $f$ is measurable
and $f\in L^{p}(L^{\infty})$. However, since the range of
$y\mapsto f(\cdot,y)$ is $\{\chi^{}_{[0,y]}:\, 0\le y\le 1\}$, which is a non-separable subset of $L^{\infty}$,
we have $f\not\in \mathcal L^{p}(L^{\infty})$.
\end{Example}

\begin{Example}\upshape
The example by Sierpinski \cite[Chapter 8]{Rudin1987}, which depends on  the continuum hypothesis,
shows
that
there is a function $f$ on $[0,1]^2$
such that for every $x\in [0,1]$, $f(x, y)=1$
for almost all $y\in [0,1]$,
while for every $y\in [0,1]$, $f(x, y)=0$
for almost all $x\in [0,1]$.
Set $f_k\equiv 0$. Then for any $y\in [0,1]$,
$\| f_k(\cdot,y) - f(\cdot,y)\|_{L_{x}^{q}}=0$.
Therefore, $y\mapsto f(\cdot,y)$ is a strongly measurable
function  and equals to $0$ in $\mathcal L^{p}(L^{q})$.

On the other hand, since
\[
  \int_0^1 dy \int_0^1 f(x,y)dx =0
  \ne 1 =\int_0^1 dx \int_0^1 f(x,y)dy,  
\] 
$f$ is not measurable on $[0,1]^2$
and therefore $f\not\in L^p(L^q)$ for any $0<p,q\le \infty$.
\end{Example}

Recall that elements in $\mathcal L^p(L^{\vec u})$
are indeed equivalent classes of functions, i.e.,
$f=g$ whenever $\|f-g\|_{\mathcal L^p(L^{\vec u})}=0$.
%
The above examples 
show that functions  in $\mathcal L^p(L^{\vec u})$ might be 
not measurable.
Moreover, we have   the following characterization of Bochner spaces,
which was proved in \cite{Benedek1961}
for the case $p, u_i\ge 1$. But the arguments also work for
the general case $p, u_i>0$.

\begin{Proposition}[{\cite[\S9]{Benedek1961}}]
\label{prop:characterization vv Lebesgue}
Suppose that $0<p\le \infty$
and $\vec u = (u_1,\ldots,u_m)$ with $0 < u_i\le \infty$, $1\le i\le m$. Then for any $f\in \mathcal L^p(L^{\vec u})$,
 there is  some $f_0\in L^p(L^{\vec u})$ such that
 for almost all $y\in \bbR^n$,
$
  f(x,y) = f_0(x,y)$ for almost all $x\in\bbR^{n_1+\ldots+n_m}$.
Consequently, $\|f-f_0\|_{\mathcal L^p(L^{\vec u})}=0$.

Whenever $0<u_i<\infty$ for all $1\le i\le m$,
for any $f_0\in L^p(L^{\vec u})$, there is some $
f\in \mathcal L^p(L^{\vec u})$ such that
$f=f_0$, a.e. Hence $\|f-f_0\|_{L^p(L^{\vec u})}=0$.
\end{Proposition}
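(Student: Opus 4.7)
The plan is to follow the Benedek--Panzone argument of \cite{Benedek1961}, keeping careful track of where the hypotheses $p, u_i \ge 1$ are actually used, and verifying that each step still goes through when $p, u_i > 0$ (so that $L^{\vec u}$ is only quasi-normed). The two implications need rather different ideas.

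For the first implication, I would start with $f \in \mathcal{L}^p(L^{\vec u})$ and use the definition of strong measurability to fix simple $L^{\vec u}$-valued functions $f_k = \sum_{i=1}^{r_k} a_i^{(k)} \chi^{}_{E_i^{(k)}}$ with $f_k(y) \to f(y)$ in $L^{\vec u}$ for almost every $y \in \bbR^n$. Choosing measurable representatives of each $a_i^{(k)}$, form the jointly measurable functions $\tilde{f}_k(x,y) = \sum_i a_i^{(k)}(x)\,\chi^{}_{E_i^{(k)}}(y)$ on $\bbR^{N_m+n}$. The decisive step is to extract a single subsequence $\{k_j\}$ and a full-measure set $E \subset \bbR^n$ such that $\tilde{f}_{k_j}(\cdot,y)$ converges pointwise almost everywhere in $x$ for every $y \in E$ simultaneously. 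Because $\tilde f_k - \tilde f_{k'}$ is a simple function of $y$ taking values in $L^{\vec u}$, the map $y \mapsto \|\tilde{f}_k(\cdot,y) - \tilde{f}_{k'}(\cdot,y)\|_{L^{\vec u}}$ is an elementary step function and in particular measurable. Egorov's theorem applied to the slice norms, combined with the standard subsequence-to-almost-everywhere principle in $L^{\vec u}$ (valid for all $0 < u_i \le \infty$), produces such a subsequence. Defining $f_0(x,y)$ to be this pointwise limit where it exists and $0$ otherwise gives a jointly measurable function whose $L^{\vec u}$-slice norm equals $\|f(y)\|_{L^{\vec u}}$ for a.e.\ $y$; hence $f_0 \in L^p(L^{\vec u})$ and $\|f - f_0\|_{\mathcal{L}^p(L^{\vec u})} = 0$.

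For the second implication, assume $0 < u_i < \infty$ for all $i$, which guarantees that $L^{\vec u}$ is separable. Fix $f_0 \in L^p(L^{\vec u})$ and a countable dense family $\{g_j\} \subset L^{\vec u}$ of simple functions. Because each $u_i$ is finite, Fubini applied to the jointly measurable function $|f_0(x,y) - g_j(x)|^{u_1}$, followed by successive integration in the remaining $x$-variables, shows that $y \mapsto \|f_0(\cdot,y) - g_j\|_{L^{\vec u}}$ is measurable for every $j$. Hence $y \mapsto f_0(\cdot,y)$ is Borel measurable as a map into a separable metric space, and the simple-function approximations required by the Bochner definition are produced by the usual nearest-$g_j$ quantization on measurable partitions of $\bbR^n$ (intersected with an exhausting sequence so the simple functions are supported on sets of finite measure when needed). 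This yields an $f \in \mathcal{L}^p(L^{\vec u})$ with $f = f_0$ almost everywhere.

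The main obstacle is the uniformization in the first direction: convergence in $L^{\vec u}$ only guarantees almost-everywhere convergence along a subsequence slice by slice, and a priori each slice $y$ would call for its own subsequence. The Egorov-style argument on the measurable slice norms, together with the subsequence principle in the quasi-normed space $L^{\vec u}$, is what allows one subsequence to work simultaneously on a set of full measure in $y$; once this uniformization is in hand, everything else is routine verification that the Benedek--Panzone manipulations survive in the quasi-Banach range.
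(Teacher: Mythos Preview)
The paper does not supply its own proof of this proposition; it states the result, cites \cite[\S9]{Benedek1961} for the case $p,u_i\ge 1$, and asserts in the surrounding text that ``the arguments also work for the general case $p,u_i>0$.'' Your proposal is precisely a fleshed-out version of that assertion: you rerun the Benedek--Panzone argument and flag the only genuine issue (uniformizing the subsequence in the first implication so that the $x$-a.e.\ convergence holds simultaneously for a.e.\ $y$), handling it via Egorov on slice norms and the rapid-Cauchy/subsequence principle in $L^{\vec u}$, which is valid for all $0<u_i\le\infty$; the second implication via separability and Pettis-type measurability is standard and correct. So your approach is correct and coincides with what the paper intends.
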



The following result can be proved with almost the same arguments as
those in the proof of \cite[Proposition 5.5.6]{Grafakos2014}
or \cite[Lemma 1.2.19]{Hytonen2016}.

\begin{Proposition}\label{prop:density}
Suppose that $0<p< \infty$
and $\vec u = (u_1,\ldots,u_k)$ with $0< u_i<\infty$, $1\le i\le k$. Then functions of the form
\begin{equation}\label{eq:ec:e35}
\sum_{i=1}^r a_i(x) \chi^{}_{E_i}(y)
\end{equation}
are dense   in $L^p(L^{\vec u})$,
where
$a_i(x)\in L^{\vec u}$, $E_i$ are pairwise disjoint measurable
sets in $\bbR^n$ with finite measures,
and $r\ge 1$ is an integer.

Moreover, the conclusion is also true whenever $L^p$ is replaced
by $L^{\vec p}$ with $\vec p = (p_1,\ldots,p_m)$,
$0<p_i<\infty$ for all $1\le i\le m$, and
$L^{\vec u}$ is replaced by any separable quasi-normed vector space of measurable functions defined on $\bbR^N$.
\end{Proposition}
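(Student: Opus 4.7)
The plan is to transport the standard density-of-simple-functions result from the Bochner space $\mathcal L^p(L^{\vec u})$ to the mixed-norm space $L^p(L^{\vec u})$, using the correspondence in Proposition~\ref{prop:characterization vv Lebesgue} as the bridge. Since $0 < u_i < \infty$ for every $i$, the space $L^{\vec u}$ is separable; fix a countable dense subset $\{v_j\}_{j\ge 1}\subset L^{\vec u}$. Given $f\in L^p(L^{\vec u})$, Proposition~\ref{prop:characterization vv Lebesgue} furnishes a representative $\tilde f\in \mathcal L^p(L^{\vec u})$ coinciding with $f$ almost everywhere, so that $F: y\mapsto \tilde f(\cdot,y)$ is strongly measurable into $L^{\vec u}$ and $y\mapsto \|F(y)\|_{L^{\vec u}}$ lies in $L^p(\bbR^n)$.

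For each $k\ge 1$, I would then define the measurable partition
\[
 A_{j,k} = \Big\{y\in \bbR^n:\ j=\min\{\ell\ge 1: \|v_\ell - F(y)\|_{L^{\vec u}}<1/k\}\Big\},\qquad j\ge 1,
\]
and truncate to finitely many indices of finite measure by setting $A'_{j,k} := A_{j,k}\cap \{y:\|F(y)\|_{L^{\vec u}}\le k\}\cap B(0,k)$ for $1\le j\le N_k$, with $N_k$ taken large enough that $\bigcup_{j\le N_k} A'_{j,k}$ exhausts $\bbR^n$ up to a set on which $\|F(y)\|_{L^{\vec u}}$ has arbitrarily small $L^p$-norm. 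Put $F_k(y) = \sum_{j=1}^{N_k} v_j\, \chi^{}_{A'_{j,k}}(y)$. By construction, $\|F_k(y) - F(y)\|_{L^{\vec u}}\to 0$ pointwise for a.e.\ $y$ and $\|F_k(y)\|_{L^{\vec u}}\le \|F(y)\|_{L^{\vec u}}+1/k$, so dominated convergence in $L^p(\bbR^n)$ yields $\|F_k - F\|_{\mathcal L^p(L^{\vec u})}\to 0$. The jointly measurable function $f_k(x,y) := \sum_{j=1}^{N_k} v_j(x)\chi^{}_{A'_{j,k}}(y)$ is of the required form \eqref{eq:ec:e35}, and $\|f_k - f\|_{L^p(L^{\vec u})} = \|F_k - F\|_{\mathcal L^p(L^{\vec u})}$ again by Proposition~\ref{prop:characterization vv Lebesgue}, so $f_k\to f$ in $L^p(L^{\vec u})$.

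The extension to the setting where $L^p$ is replaced by $L^{\vec p}$ with $0<p_i<\infty$ and $L^{\vec u}$ by an arbitrary separable quasi-normed space $X$ of measurable functions on $\bbR^N$ proceeds by the same construction. The separability of $X$ is used only to produce the countable dense set $\{v_j\}$; the final convergence step replaces scalar dominated convergence in $L^p$ by its mixed-norm analogue, which follows by iterating the usual dominated convergence theorem with respect to each variable in turn, using that every $p_i<\infty$.

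The only real obstacle is the joint-measurability issue at the start: the natural output of the approximation scheme is the $L^{\vec u}$-valued simple function $F_k$, which lives in the Bochner space, whereas the density statement is asserted in the mixed-norm space and demands a genuinely jointly measurable function $f_k(x,y)$. Proposition~\ref{prop:characterization vv Lebesgue} is the precise tool needed to pass freely between these two viewpoints — identifying $f$ with $F$ in one direction, and $F_k$ with $f_k$ in the other — after which the remainder of the argument is a routine adaptation of the classical Bochner-space proofs in \cite[Proposition 5.5.6]{Grafakos2014} and \cite[Lemma 1.2.19]{Hytonen2016}.
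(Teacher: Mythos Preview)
Your approach is exactly what the paper intends: it gives no proof of its own but defers to the standard simple-function approximation in Bochner spaces from \cite[Proposition 5.5.6]{Grafakos2014} and \cite[Lemma 1.2.19]{Hytonen2016}, and your use of Proposition~\ref{prop:characterization vv Lebesgue} as the bridge between $L^p(L^{\vec u})$ and $\mathcal L^p(L^{\vec u})$ is the natural way to carry that argument over to the mixed-norm setting.

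One technical point to tighten: the dominated-convergence step as written does not quite go through. The bound $\|F_k(y)\|\le \|F(y)\|+1/k$ only yields a majorant for $\|F_k-F\|$ of the form $\|F\|+c$, and constants are not in $L^p(\bbR^n)$; concretely, on $\bigcup_{j\le N_k}A'_{j,k}\subset B(0,k)$ you have $\|F_k-F\|<1/k$, whose $L^p$-contribution is of order $k^{\,n/p-1}$ and need not vanish. Likewise, your choice of $N_k$ (making the leftover small in $L^p$) does not by itself force the pointwise a.e.\ convergence you assert. The standard cure is to add the further restriction $\|F(y)\|_{L^{\vec u}}\ge 1/k$ to the definition of $A'_{j,k}$, so that $\|F_k(y)-F(y)\|_{L^{\vec u}}\le \|F(y)\|_{L^{\vec u}}$ for \emph{every} $y$, and to choose $N_k$ so that the leftover set has measure $<2^{-k}$ (Borel--Cantelli then gives pointwise convergence); alternatively, first reduce to $F$ bounded and supported in a fixed ball before running the approximation. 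With that adjustment the argument is complete and matches the paper's indicated route.
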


%
%

%

\section{Interpolation between Mixed-Norm Lebesgue Spaces}

The theory of interpolation spaces is a powerful tool
in the study of the boundedness of operators. Here we introduce some fundamental results on  real interpolation spaces  which is based on the
  $K$-functional~\cite{Lions1964}.
 And we refer to \cite{Bergh1976,Cwiel1974,Hytonen2016,Janson1988,Milman1981}
for more details on this topic.

Let $X_0$ and $X_1$ be two quasi-normed vector spaces, both of which are  continuously embedded in a Hausdorff topological
vector space.
We call $(X_0,X_1)$  a pair of interpolation couples.

Fix some interpolation couple $(X_0, X_1)$.
For any $t>0$ and $x\in X_0+X_1$,
we define the $K$-functional $K(t,x;X_0,X_1)$ by
\[
  K(t,x;X_0,X_1) = \inf\{\|x_0\|_{X_0}+t\|x_1\|_{X_1}:\, x=x_0+x_1,x_0\in X_0, x_1\in X_1\}.
\]
For $0< \theta<1$ and $0< p\le \infty$, the interpolation space $(X_0, X_1)_{\theta,p}$ is defined by
\[
  (X_0, X_1)_{\theta,p} =\{x \in  X_0+X_1:  \|x\|_{\theta,p}  <\infty\},
\]
where
\begin{equation}\label{eq:theta p}
  \|x\|_{\theta,p} = \|\{2^{-n\theta}K(2^n,x;X_0,X_1):\,n\in\bbZ\}\|_{\ell^{p}}.
\end{equation}

Note that for any $t>0$,  $x\mapsto K(t,x;X_0,X_1)$ is a quasi-norm on $X_0+X_1$.
Moreover, we see from the definition of the $K$-functional
that $ K(t,x; X_0,X_1 )$ is increasing with respect to
$t$ and for any $s,t>0$,
\begin{equation}\label{eq:ec:e31}
  K(t,x;X_0,X_1)
  \le \max\{1, \frac{t}{s}\} K(s,x;X_0,X_1 ).
\end{equation}
Hence $K(t,x;X_0,X_1) $ is continuous with respect to
$t$.

Let $(X_0, X_1)$ and
$(Y_0, Y_1)$ be interpolation couples.
We call  $T: X_0 + X_1 \rightarrow Y_0 + Y_1$  a
\emph{quasi-linear operator} if for any $x_0+x_1\in X_0 + X_1$ with $x_i\in X_i$, $i=0,1$,
there exist $y_0\in Y_0$ and $y_1\in Y_1$ such that
$T(x_0+x_1) = y_0+y_1$ and
\begin{equation}\label{eq:C}
 \|y_i\|_{Y_i} \le C_i \|x_i\|_{X_i}, \qquad i=0,1
\end{equation}
for some constants $C_0, C_1$.

It is easy to check that if $T$ is a bounded linear operator
from $X_i$ to $Y_i$, $i=0,1$,
then it is quasi-linear from $X_0+X_1$ to $Y_0+Y_1$.

For any $1< p_0, p_1\le \infty$,
the Hardy-Littlewood maximal function $M$ is a quasi-linear operator
on $L^{p_0}+L^{p_1}$.
To see this, take some $f_i\in L^{p_i}$, $i=0,1$. Let
\[
   g_i = \frac{Mf_i}{Mf_0 + Mf_1} \cdot M(f_0+f_1),\qquad i=0,1.
\]
Then $g_0+g_1 = M(f_0+f_1)$. Since $M(f_0+f_1) \le Mf_0 + Mf_1 $,
we have
$\|g_i\|_{L^{p_i}}\le \|Mf_i\|_{L^{p_i}}\lesssim \|f_i\|_{L^{p_i}}$, $i=0,1$.

The following result shows that interpolation spaces
are very useful in the study of the boundedness of operators.
A proof for linear operators acting on Banach spaces can be found in \cite[Theorem C 3.3]{Hytonen2016}.
For the case when $T$ is a quasi-linear operator and
$X_i, Y_i$ are quasi-normed vector spaces, see
Sagher~\cite[Theorem 16]{Sagher1972} or
\cite[\S3.13, Exercise 16]{Bergh1976}.

\begin{Proposition}
\label{prop:interpolation}
  Let $(X_0, X_1)$ and
$(Y_0, Y_1)$ be interpolation couples.
Suppose that $T: X_0 + X_1 \rightarrow Y_0 + Y_1$ is
 a quasi-linear
operator.
Then for all $0< \theta<1$ and $0<p\le \infty$,
the operator $T$ maps $(X_0,X_1)_{\theta,p}$
into $(Y_0,Y_1)_{\theta,p}$ and we have
\begin{equation}\label{eq:p:e5}
\|T\|_{(X_0,X_1)_{\theta,p}\rightarrow (Y_0,Y_1)_{\theta,p}} \le C_0^{1-\theta}C_1^{\theta},
\end{equation}
where $C_0$ and $C_1$ are constants defined in (\ref{eq:C}).
\end{Proposition}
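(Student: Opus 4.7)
The plan is to transfer the quasi-linear hypothesis into a pointwise comparison of $K$-functionals and then read the conclusion off from the definition (4.3).

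First I would fix $x\in (X_0,X_1)_{\theta,p}$ and any admissible decomposition $x=x_0+x_1$ with $x_i\in X_i$. By the definition of quasi-linearity, there exist $y_i\in Y_i$ with $Tx=y_0+y_1$ and $\|y_i\|_{Y_i}\le C_i\|x_i\|_{X_i}$. Therefore
\[
\|y_0\|_{Y_0}+t\|y_1\|_{Y_1}
\le C_0\|x_0\|_{X_0}+tC_1\|x_1\|_{X_1}
= C_0\Bigl(\|x_0\|_{X_0}+\tfrac{tC_1}{C_0}\|x_1\|_{X_1}\Bigr).
\]
Taking the infimum over all such decompositions yields the key pointwise bound
\[
K(t,Tx;Y_0,Y_1)\le C_0\, K\!\left(\tfrac{C_1}{C_0}t,\,x;X_0,X_1\right),\qquad t>0.
\]

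Next I would insert this into (4.3). The cleanest route is to work with the equivalent continuous $K$-method quasi-norm
\[
\|x\|_{\theta,p}\approx \Bigl(\int_0^\infty (t^{-\theta}K(t,x;X_0,X_1))^p\,\tfrac{dt}{t}\Bigr)^{1/p},
\]
whose equivalence with the dyadic form in (4.3) follows from the monotonicity and quasi-Lipschitz control (3.31) of $K$ in $t$. The substitution $u=(C_1/C_0)t$ leaves $dt/t$ invariant and pulls out the factor $(C_1/C_0)^{\theta}$ from $t^{-\theta}$; combined with the $C_0$ already in front, this gives $\|Tx\|_{\theta,p}\le C_0^{1-\theta}C_1^{\theta}\|x\|_{\theta,p}$. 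The $p=\infty$ case is identical with $\sup$ in place of the integral.

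The main obstacle is reconciling the dyadic discretization in (4.3) with the continuous change of variables, since the shifted points $(C_1/C_0)\cdot 2^n$ are not powers of $2$. I would handle this by invoking the equivalence between the discrete and continuous $K$-method norms, which follows from (3.31): indeed, (3.31) implies that $t\mapsto t^{-\theta}K(t,x)$ oscillates by at most a bounded factor between consecutive dyadic scales, so the dyadic $\ell^p$ sum and the continuous $L^p(dt/t)$ integral are comparable with constants depending only on $\theta$. Once the computation is done on the continuous side, the sharp ratio $C_0^{1-\theta}C_1^{\theta}$ is retained, since both sides of the final estimate are measured in the same norm and the equivalence constants cancel. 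For the quoted references (Sagher, Bergh-Löfström) the continuous definition is taken as primary, so the substitution argument is immediate there.
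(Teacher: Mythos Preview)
Your argument is the standard one and matches the references the paper cites (Sagher, Bergh--L\"ofstr\"om); the paper itself does not prove this proposition but simply defers to those sources. The key step---the pointwise inequality $K(t,Tx;Y_0,Y_1)\le C_0\,K\bigl((C_1/C_0)t,x;X_0,X_1\bigr)$ followed by the rescaling $u=(C_1/C_0)t$ inside the continuous $L^p(dt/t)$ norm---is exactly how it is done there.

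One point to correct: your claim that ``the equivalence constants cancel'' when transferring the sharp bound from the continuous norm back to the dyadic norm (\ref{eq:theta p}) is not right. If $a\|\cdot\|_c\le \|\cdot\|_d\le b\|\cdot\|_c$, then $\|Tx\|_c\le C_0^{1-\theta}C_1^{\theta}\|x\|_c$ yields only $\|Tx\|_d\le (b/a)\,C_0^{1-\theta}C_1^{\theta}\|x\|_d$, and $b/a\ne 1$ in general. The exact constant in (\ref{eq:p:e5}) is really a statement for the continuous $K$-method norm, which is the definition used in the cited references; with the dyadic form (\ref{eq:theta p}) one obtains it only up to a universal factor depending on $\theta$. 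This is immaterial for every application in the paper, where only boundedness (or at most the order of magnitude in $C_0,C_1$) is used, but your explanation of why the constants should cancel is incorrect as stated.
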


To apply the above proposition, we have to characterize
the interpolation spaces $(X_0,X_1)_{\theta,p}$
and $(Y_0,Y_1)_{\theta,p}$.

The celebrated Lions-Peetre formula \cite{Lions1964}
says that for any $1\le p_0\ne p_1 \le \infty$ and
interpolation couple $(X_0, X_1)$ of Banach spaces,
we have
\[
  \big( \mathcal L^{p_0}(X_0),\mathcal  L^{p_1}(X_1)\big)_{\theta,p}
   =\mathcal  L^p \big((X_0, X_1)_{\theta,p}\big)
\]
with equivalent norms,
where $0<\theta<1$ and $1/p = (1-\theta)/p_0 + \theta/p_1$.
And Sagher \cite[Theorem 19]{Sagher1972} showed that the same conclusion is true
when $0<p_0\ne p_1\le\infty$ and $(X_0, X_1)$
is an interpolation couple of
quasi-normed vector spaces.

For the case $p_0=p_1$, we have the following result,
which was stated for Banach spaces $X_0$ and $X_1$,
but also valid for quasi-normed vector spaces $X_0$ and $X_1$.
\begin{Proposition}[\cite{Cwiel1974,Pisier1993}]
\label{prop:interpolation pq a}
Let $(X_0,X_1)$ be an interpolation couple of
complete quasi-normed vector spaces
and
$0<\theta<1$.
If $0<q\le p\le \infty$, we have the continuous embedding
\begin{equation}\label{eq:interpolation}
 (\mathcal L^p(X_0), \mathcal L^p(X_1))_{\theta,q}
 \hookrightarrow \mathcal L^p((X_0,X_1)_{\theta,q}),
\end{equation}
and the reverse  embedding
 holds if $0<p\le q\le\infty$. In particular, for
$0<p=q\le \infty$, we have
\begin{equation}\label{eq:e:p}
(\mathcal L^p(X_0), \mathcal L^p(X_1))_{\theta,p}
  =  \mathcal L^p((X_0,X_1)_{\theta,p})
\end{equation}
with equivalent norms.
\end{Proposition}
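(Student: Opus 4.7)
\,\,
The plan is to reduce the proposition to a pointwise comparison of $K$-functionals, after which everything is governed by Minkowski's integral inequality. Write $K_X(t,\xi):=K(t,\xi;X_0,X_1)$ for $\xi\in X_0+X_1$, and $K_{\mathcal L}(t,f):=K(t,f;\mathcal L^p(X_0),\mathcal L^p(X_1))$ for $f\in\mathcal L^p(X_0)+\mathcal L^p(X_1)$.

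The central step is the two-sided estimate
\begin{equation}\label{eq:plan:K}
K_{\mathcal L}(t,f)\approx \bigl\|K_X(t,f(\cdot))\bigr\|_{L^p},\qquad t>0,
\end{equation}
with constants independent of $t$ and $f$. The easy direction $\bigl\|K_X(t,f(\cdot))\bigr\|_{L^p}\lesssim K_{\mathcal L}(t,f)$ is immediate: any decomposition $f=f_0+f_1$ with $f_i\in\mathcal L^p(X_i)$ yields pointwise $K_X(t,f(x))\le\|f_0(x)\|_{X_0}+t\|f_1(x)\|_{X_1}$, and taking $L^p$-norms (absorbing a quasi-triangle constant when $p<1$) and then the infimum over decompositions gives the bound. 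For the opposite direction, for each fixed $t$ I would construct a Bochner-level decomposition $f=f_0^t+f_1^t$ with $f_i^t\in\mathcal L^p(X_i)$ and
\[
\|f_0^t(x)\|_{X_0}+t\|f_1^t(x)\|_{X_1}\le 2\,K_X(t,f(x))\quad\text{for a.e.\ }x.
\]
Strong measurability places the range of $f$ in a separable subspace of $X_0+X_1$; combined with Proposition~\ref{prop:density}, the near-optimal decomposition can first be performed on simple approximants (where it reduces to choosing one decomposition per atom) and then transferred to $f$ by completeness of $X_0,X_1$ and the uniform pointwise bound. Taking $L^p$-norms of the two summands separately then gives $K_{\mathcal L}(t,f)\le\|f_0^t\|_{\mathcal L^p(X_0)}+t\|f_1^t\|_{\mathcal L^p(X_1)}\lesssim\bigl\|K_X(t,f(\cdot))\bigr\|_{L^p}$.

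Once (\ref{eq:plan:K}) is established, the two norms in the proposition read
\[
\|f\|_{(\mathcal L^p(X_0),\mathcal L^p(X_1))_{\theta,q}}\approx \Bigl\|\bigl\|2^{-n\theta}K_X(2^n,f(\cdot))\bigr\|_{L^p_x}\Bigr\|_{\ell^q_n},\qquad \|f\|_{\mathcal L^p((X_0,X_1)_{\theta,q})}=\Bigl\|\bigl\|2^{-n\theta}K_X(2^n,f(\cdot))\bigr\|_{\ell^q_n}\Bigr\|_{L^p_x},
\]
so the embedding (\ref{eq:interpolation}) is nothing but the inclusion $L^p_x(\ell^q_n)\hookrightarrow\ell^q_n(L^p_x)$, i.e.\ Minkowski's integral inequality applied with outer exponent $p/q\ge 1$ (valid for $q\le p$). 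The reverse embedding, under $p\le q$, is the opposite Minkowski inclusion. When $p=q$ the two mixed norms coincide and (\ref{eq:e:p}) follows.

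The main obstacle I expect is the measurable selection in the hard half of (\ref{eq:plan:K}). For Banach $X_0,X_1$ this reduces to the classical Lions-Peetre computation (cf.\ \cite{Bergh1976}), but in the present complete quasi-normed setting one must (i) exploit the separability of the range of $f$ guaranteed by strong measurability, and (ii) carefully track the quasi-triangle constants when lifting the pointwise near-optimal decomposition from simple approximants to a general $f$. The endpoints $p=\infty$ or $q=\infty$, with norms interpreted as essential suprema, pose no additional difficulty since the corresponding Minkowski-type inclusion degenerates to a trivial comparison.
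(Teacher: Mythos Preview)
The paper does not give its own proof of this proposition; it simply cites Cwikel and Pisier. Your approach is the standard one and is essentially what the paper itself carries out for the closely related mixed-norm analogue, Theorem~\ref{thm:mixed Lp}: establish the pointwise $K$-functional comparison $K_{\mathcal L}(t,f)\approx\|K_X(t,f(\cdot))\|_{L^p}$, then reduce the two embeddings to Minkowski's inequality between $L^p_x$ and $\ell^q_n$. In the proof of Theorem~\ref{thm:mixed Lp} the paper handles the hard half of the $K$-functional bound exactly as you propose---first for simple functions $\sum_i a_i\chi_{E_i}$, where one picks a near-optimal decomposition of each coefficient $a_i$ separately (see (\ref{eq:ec:e36})), and then extends by density (Proposition~\ref{prop:density}). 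So your plan is correct and aligned with the paper's methodology.

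One small point: the paper's density argument for the reverse embedding in Theorem~\ref{thm:mixed Lp} is written only for $q<\infty$, since that is where the target interpolation space is separable and Proposition~\ref{prop:density} applies. Your statement includes $q=\infty$; at that endpoint the density route is not available, but once (\ref{eq:plan:K}) is in hand the case $q=\infty$ is immediate, since $\sup_n\|g_n\|_{L^p}\le\|\sup_n g_n\|_{L^p}$ requires no Minkowski step at all. So no new idea is needed there, just a separate one-line remark.
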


For the interpolation of Lorentz spaces, we have the following formula.
%

\begin{Proposition}[{\cite[Theorem 5.3.1]{Bergh1976}}]
\label{prop:interpolation Lorentz a}
Let   $0<p_0$, $p_1$, $q_0$, $q_1$,
 $q\le \infty$,
$0<\theta<1$ and $1/p = (1-\theta)/p_0+\theta/p_1$.
Then for $p_0\ne p_1$, we have
\[
  (L^{p_0,q_0} , L^{p_1,q_1} )_{\theta,q}
  = L^{p,q}
\]
with equivalent norms.
And the formula is also true for $p_0=p_1$
provided $1/q = (1-\theta)/q_0+\theta/q_1$.
\end{Proposition}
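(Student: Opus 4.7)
The plan is to reduce the identification of $(L^{p_0,q_0},L^{p_1,q_1})_{\theta,q}$ with $L^{p,q}$ to an explicit computation of the $K$-functional, exploiting the standard description of the Lorentz quasi-norm through the decreasing rearrangement: $\|f\|_{L^{p,q}}\approx \|s^{1/p}f^*(s)\|_{L^q(ds/s)}$. This transforms everything about the couple $(L^{p_0,q_0},L^{p_1,q_1})$ into statements about weighted $L^{q_i}$ norms of the monotone function $f^*$ on $(0,\infty)$, where both the obstruction and the structure are much more transparent.

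For the main case $p_0\ne p_1$, say $p_0<p_1$, the central input would be Holmstedt's formula: with $\alpha=(1/p_0-1/p_1)^{-1}$,
\[
  K(t,f;L^{p_0,q_0},L^{p_1,q_1})
  \approx \left(\int_0^{t^{\alpha}}(s^{1/p_0}f^*(s))^{q_0}\,\frac{ds}{s}\right)^{1/q_0}
  + t\left(\int_{t^{\alpha}}^{\infty}(s^{1/p_1}f^*(s))^{q_1}\,\frac{ds}{s}\right)^{1/q_1}.
\]
I would prove the $\lesssim$ direction by choosing the explicit decomposition $f=f\chi^{}_{\{|f|>f^*(t^{\alpha})\}}+f\chi^{}_{\{|f|\le f^*(t^{\alpha})\}}$, using that the rearrangement of the first piece is essentially $f^*\chi^{}_{[0,t^{\alpha}]}$ and bounding each summand in its own Lorentz quasi-norm. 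The $\gtrsim$ direction would follow from the general lower estimate $K(t,f)\gtrsim \|g_0\|_{L^{p_0,q_0}}+t\|g_1\|_{L^{p_1,q_1}}$ applied to any canonical splitting of $f^*$ across the level $t^{\alpha}$.

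Once Holmstedt is available, I would substitute into $\|f\|_{\theta,q}^{q}=\sum_{n\in\bbZ}2^{-n\theta q}K(2^n,f)^{q}$, split the two-term sum, and pass to a continuous version $\int_0^\infty (t^{-\theta}K(t,f))^{q}\,dt/t$ using the quasi-concavity of $K$ in $t$. A change of variables $u=t^{\alpha}$ converts each term into a weighted integral of $s^{1/p}f^*(s)$; a one-sided Hardy-type inequality on monotone functions then collapses the two pieces into the single weighted integral defining $\|f\|_{L^{p,q}}$, and the scaling identity $1/p=(1-\theta)/p_0+\theta/p_1$ emerges automatically from matching the exponents carried by $\alpha$ and $\theta$.

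For the degenerate case $p_0=p_1=p$, Holmstedt's formula becomes trivial, so a separate route is required. Here I would identify $L^{p,q_i}$ (up to equivalence) with the weighted Lebesgue space of decreasing functions on $(0,\infty)$ under the measure $s^{q_i/p-1}\,ds$ via the map $f\mapsto f^*$, and reduce the interpolation to the scalar weighted formula $(L^{q_0}(w),L^{q_1}(w))_{\theta,q}=L^{q}(w)$ with $1/q=(1-\theta)/q_0+\theta/q_1$, which itself is a direct $K$-functional computation and can also be viewed as a scalar instance of Proposition~\ref{prop:interpolation pq a}. The genuine obstacle throughout is Holmstedt's formula, in particular the upper bound: producing a near-optimal decomposition of $f$ that is simultaneously sharp in both target Lorentz quasi-norms is intrinsically nonlinear in $f$, and keeping track of the rearrangement through the level-set truncation is the delicate step that forces the unusual appearance of $t^{\alpha}$.
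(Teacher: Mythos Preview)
The paper gives no proof of this proposition: it is quoted as \cite[Theorem 5.3.1]{Bergh1976} and used as a black box. Your outline is precisely the classical argument found in that reference --- pass to the decreasing rearrangement, establish Holmstedt's formula for the $K$-functional of the Lorentz couple when $p_0\ne p_1$, and then integrate against $t^{-\theta q}\,dt/t$ with a Hardy-type inequality to recover $\|s^{1/p}f^*(s)\|_{L^q(ds/s)}$. So there is nothing to compare; you have reconstructed the standard proof the paper defers to.

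Two small corrections in the $p_0=p_1=p$ case. First, the map $f\mapsto f^*$ is nonlinear and not injective, so ``identify $L^{p,q_i}$ with a weighted Lebesgue space'' is too loose as written; what actually makes the reduction work is the two-sided estimate
\[
K(t,f;L^{p,q_0},L^{p,q_1})\approx K\bigl(t,\,s^{1/p}f^*(s);\,L^{q_0}(ds/s),L^{q_1}(ds/s)\bigr),
\]
obtained from the subadditivity $(f_0+f_1)^*(s)\le f_0^*(s/2)+f_1^*(s/2)$ in one direction and a level-set splitting of $f$ in the other. Second, your weight $s^{q_i/p-1}\,ds$ depends on $i$, so the reduction ``$(L^{q_0}(w),L^{q_1}(w))_{\theta,q}=L^{q}(w)$'' with a single $w$ does not literally apply; the fix is to use the common measure $ds/s$ and the common function $g(s)=s^{1/p}f^*(s)$, after which the scalar power theorem $(L^{q_0}(d\mu),L^{q_1}(d\mu))_{\theta,q}=L^{q}(d\mu)$ with $1/q=(1-\theta)/q_0+\theta/q_1$ finishes the argument as you intend.
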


Now we consider the interpolation between mixed-norm spaces.

Let $A$ and $B$ be two quasi-normed vector spaces
which consist  of measurable functions defined on
$\bbR^m$ and $\bbR^n$, respectively.
Denote by $B(A)$ the function space which consists of all
measurable functions $f$  on $\bbR^m\times \bbR^n$ for which
$y\mapsto \|f(\cdot,y)\|_{A} $ is a function in
$B$ and
\[
\|f\|_{B(A)}:= \big\|\|f(\cdot, y)\|_A\big\|_{B}<\infty.
\]

Let $X$ be a function space consisting of some
measurable functions defined on  $ \bbR^n$.
We say that  $X$  is solid  if
for any
measurable
functions $f, g$ with  $g\in X$ and $|f(x)|\le |g(x)|$ for all $x\in \bbR^n$, we have
$f\in X$ and $\|f\|_X \le \|g\|_X$.

It is easy to see that if $X$ is solid,
then $\|f\|_X = \big\| |f|\big\|_X$ for any $f\in X$.
Moreover, mixed-norm Lebesgue spaces are solid
while the space of continuous functions is not.

%

The following proposition shows that many interpolation
 formula
 for ordinary function spaces
are also valid for mixed-norm spaces.

\begin{Proposition}\label{prop:vector a}
Let $B_0$, $B_1$ and $A$ be quasi-normed vector spaces,
all of which consist of some measurable functions defined
on $\bbR^n$ ( for $B_0$ and $B_1$ )
or on $\bbR^m$ (for $A$).
Suppose that both
$B_0$ and $B_1$ are solid.

Then for any $0<\theta<1$ and $0< p\le \infty$,
\begin{equation}\label{eq:vector}
(B_0(A), B_1(A))_{\theta,p} = (B_0, B_1 )_{\theta,p}(A)
\end{equation}
with equivalent norms.
\end{Proposition}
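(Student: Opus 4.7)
The plan is to reduce the identity (\ref{eq:vector}) to a pointwise equivalence of $K$-functionals. For $f\in B_0(A)+B_1(A)$ set $g_f(y):=\|f(\cdot,y)\|_A$; the key claim is
\[
   K(t,f;B_0(A),B_1(A)) \;\approx\; K(t,g_f;B_0,B_1), \qquad t>0,
\]
with implicit constants depending only on the quasi-triangle constant of $A$. Granted this claim, substituting into the definition (\ref{eq:theta p}) immediately yields
\[
  \|f\|_{(B_0(A),B_1(A))_{\theta,p}}
  \;\approx\; \|g_f\|_{(B_0,B_1)_{\theta,p}}
  \;=\;\|f\|_{(B_0,B_1)_{\theta,p}(A)},
\]
which is exactly (\ref{eq:vector}). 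So the whole proposition collapses to the $K$-functional claim.

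For the upper bound $K(t,f;B_0(A),B_1(A))\le K(t,g_f;B_0,B_1)$, I would start from a nonnegative decomposition $g_f=h_0+h_1$ with $h_i\in B_i$. The reduction to the nonnegative case uses solidity: given any decomposition $g_f=h_0'+h_1'$, the replacement $h_i:=g_f\,|h_i'|/(|h_0'|+|h_1'|)$ (with $0/0:=0$) satisfies $h_0+h_1=g_f$ and $h_i\le|h_i'|$ pointwise, so $\|h_i\|_{B_i}\le\|h_i'\|_{B_i}$. Then define
\[
  f_i(x,y) := f(x,y)\,\frac{h_i(y)}{g_f(y)}\,\chi^{}_{\{g_f(y)>0\}},\qquad i=0,1.
\]
A direct check gives $f_0+f_1=f$ and $\|f_i(\cdot,y)\|_A=h_i(y)$, hence $\|f_i\|_{B_i(A)}=\|h_i\|_{B_i}$; infimizing over $h_0,h_1$ yields the bound. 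For the reverse bound, given $f=f_0+f_1$ with $f_i\in B_i(A)$, set $g_i(y):=\|f_i(\cdot,y)\|_A$ and split $g_f$ proportionally via $h_i:=g_f\,g_i/(g_0+g_1)$. The quasi-triangle inequality in $A$ yields $g_f\le C_A(g_0+g_1)$, so $h_i\le C_A g_i$; solidity of $B_i$ then forces $\|h_i\|_{B_i}\le C_A\|f_i\|_{B_i(A)}$, whence $K(t,g_f;B_0,B_1)\le C_A\,K(t,f;B_0(A),B_1(A))$ after infimizing.

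I expect the only real friction to be measurability bookkeeping: one must ensure that $y\mapsto\|f(\cdot,y)\|_A$ is measurable for every $f\in B_0(A)+B_1(A)$, and that the ratios defining $f_i$ and $h_i$ remain measurable. In the applications of interest here (where $A$ is a mixed-norm Lebesgue or Lorentz space of measurable functions) this is automatic from Fubini and the very definition of $B_i(A)$, since $g_f\le C_A(g_{f_0}+g_{f_1})$ with each $g_{f_i}$ measurable. Once that is in hand, the argument is essentially algebraic, uses solidity only through the two monotonicity steps indicated above, and works uniformly for $0<\theta<1$ and $0<p\le\infty$, covering both the Banach and the quasi-Banach regimes without modification.
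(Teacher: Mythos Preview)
Your proposal is correct and follows essentially the same route as the paper: both reduce (\ref{eq:vector}) to the $K$-functional equivalence $K(t,f;B_0(A),B_1(A))\approx K(t,g_f;B_0,B_1)$, and both prove the two inequalities by the same proportional splittings --- multiplying $f$ by $h_i/g_f$ in one direction and setting $h_i=g_f\,g_i/(g_0+g_1)$ in the other, invoking solidity of $B_i$ at exactly the same two points. Your extra step of first reducing to a nonnegative decomposition $g_f=h_0+h_1$ is not strictly needed (since $B_i$ solid gives $\| |g_i|\|_{B_i}=\|g_i\|_{B_i}$ anyway), and the paper simply omits the measurability discussion you flag, but otherwise the arguments coincide.
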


\begin{proof}
It suffices to show that for any $f\in B_0(A)+B_1(A)$ and $t>0$,
\[
 K(t,f;B_0(A), B_1(A))  \approx
  K(t,y\mapsto\|f(\cdot,y)\|_A; B_0, B_1) .
\]

Since $A$ is quasi-normed, there is some constant $C$ such that
for any $a_1, a_2\in A$,
\[
    \|a_1 + a_2 \|_A \le C(\|a_1\|_A + \|a_2\|_A).
\]
Suppose that $f= f_0 + f_1$, where $f_i\in B_i(A)$, $i=0,1$.
Let
\[
  g_i(y)  = \frac{\|f_i(\cdot,y)\|_A}
   {\|f_0(\cdot,y)\|_A + \|f_1(\cdot,y)\|_A}
          \|f(\cdot,y)\|_A
\]
when $\|f(\cdot,y)\|_A>0$ and $g_i(y)=0$ when $\|f(\cdot,y)\|_A = 0$, $i=0,1$.
We have $g_0(y)+g_1(y) = \|f(\cdot,y)\|_A$ and
$0\le g_i(y) \le C \|f_i(\cdot,y)\|_A$, $i=0,1$.
Since $B_i$ is solid, we have $g_i\in B_i $ and
$\|g_i\|_{B_i} \le C \|f_i\|_{B_i(A)}$, $i=0,1$.
It follows that
\[
 C  \cdot K(t,f;B_0(A), B_1(A))  \ge  K(t,y\mapsto\|f(\cdot,y)\|_A; B_0, B_1) .
\]

On the other hand, suppose that
 $\|f(\cdot,y)\|_A = g_0(y) + g_1(y)$ for some $g_i\in B_i$, $i=0,1$.
Let
\[
  f_0(x,y) = \frac{g_0(y)}{\|f(\cdot,y)\|_A} f(x,y)
     \mbox{ and }
  f_1(x,y) = \frac{g_1(y)}{\|f(\cdot,y)\|_A} f(x,y),\quad
  (x,y)\in \bbR^m\times \bbR^n.
\]
Since both $B_0$ and $B_1$ are solid, we have
\[
  \|f_0\|_{B_0(A)} =    \|g_0\|_{B_0},
   \quad  \quad
  \|f_1\|_{B_1(A)} =    \|g_1\|_{B_1}
\]
and  $f = f_0 + f_1$.
Hence
\[
 K(t,f;B_0(A), B_1(A))  \le    K(t,y\mapsto \|f(\cdot,y)\|_A; B_0, B_1) .
\]
This completes the proof.
\end{proof}


The following result is an immediate consequence of
Propositions
 \ref{prop:interpolation Lorentz a}
and \ref{prop:vector a}.

\begin{Corollary}\label{Co:Lorentz}
Suppose that
$\vec u = (u_1,\ldots,u_m)$
  with
$0<u_i \le \infty$, $1\le i\le m$.
Let   $0<p_0$, $p_1$, $q_0$, $q_1$,
 $q\le \infty$,
$0<\theta<1$ and $1/p = (1-\theta)/p_0+\theta/p_1$.
Then for $p_0\ne p_1$, we have
\[
  (L^{p_0,q_0}(L^{\vec u}) , L^{p_1,q_1}(L^{\vec u}) )_{\theta,q}
  = L^{p,q}(L^{\vec u})
\]
with equivalent norms.
And the formula is also true for $p_0=p_1$
provided $1/q = (1-\theta)/q_0+\theta/q_1$.
\end{Corollary}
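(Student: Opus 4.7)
The plan is to combine Proposition \ref{prop:interpolation Lorentz a} and Proposition \ref{prop:vector a} directly, with $B_0 = L^{p_0,q_0}$, $B_1 = L^{p_1,q_1}$ (as function spaces on some $\bbR^k$) and $A = L^{\vec u}$ (as a function space on $\bbR^{n_1+\ldots+n_m}$). The corollary then follows from a two-line chain of equalities, so the real content lies in checking the hypotheses of Proposition \ref{prop:vector a}, namely that the two Lorentz spaces $L^{p_0,q_0}$ and $L^{p_1,q_1}$ are solid quasi-normed vector spaces of measurable functions.

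First I would record the solidity of $L^{p,q}$: if $|f(x)|\le |g(x)|$ pointwise, then the decreasing rearrangements satisfy $f^*(t)\le g^*(t)$ for every $t>0$, and hence $\|f\|_{L^{p,q}}\le \|g\|_{L^{p,q}}$ directly from the definition of the Lorentz quasi-norm. Thus $B_0$ and $B_1$ are solid, and Proposition \ref{prop:vector a} applies.

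Applying Proposition \ref{prop:vector a} with the triple $(B_0,B_1,A)$ just described yields
\[
(L^{p_0,q_0}(L^{\vec u}),\, L^{p_1,q_1}(L^{\vec u}))_{\theta,q}
= (L^{p_0,q_0},\, L^{p_1,q_1})_{\theta,q}(L^{\vec u})
\]
with equivalent quasi-norms. Then Proposition \ref{prop:interpolation Lorentz a} identifies the inner interpolation space as $L^{p,q}$ in either of the two regimes allowed by the hypothesis (namely $p_0\ne p_1$, or $p_0=p_1$ with $1/q = (1-\theta)/q_0+\theta/q_1$). Substituting gives $L^{p,q}(L^{\vec u})$, which is the desired identity.

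The potential obstacle is purely bookkeeping: one must ensure that the quasi-normed vector space $A = L^{\vec u}$ (with possibly some $u_i=\infty$) is admissible as the inner space in Proposition \ref{prop:vector a}, which requires nothing beyond being a quasi-normed vector space of measurable functions, and that both $B_0(A)$ and $B_1(A)$ do form an interpolation couple (they are both continuously embedded in the space of measurable functions on $\bbR^k\times\bbR^{n_1+\ldots+n_m}$ equipped with convergence in measure on sets of finite measure, so this is automatic). No further estimates are needed.
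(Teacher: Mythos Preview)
Your proposal is correct and follows exactly the approach the paper indicates: the corollary is stated there as an immediate consequence of Propositions~\ref{prop:interpolation Lorentz a} and~\ref{prop:vector a}, and your argument spells out precisely this combination, including the verification that Lorentz spaces are solid so that Proposition~\ref{prop:vector a} applies.
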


Next we give an  interpolation
formula for mixed-norm  Lebesgue spaces.

\begin{Theorem}
\label{thm:mixed Lp}
Suppose that
$0<\theta<1$,
$0<u,v\le \infty$ and $\vec p=(p_1,\ldots,p_m)$ with
  $0<p_i\le \infty$, $1\le i\le m$.

For any $0<q\le \min\{p_i:\, 1\le i\le m\}$,
we have
\begin{equation}\label{eq:interpolation:m}
  (L^{\vec p}(L^{u}), L^{\vec p}(L^{v}))_{\theta,q}
   \hookrightarrow
   L^{\vec p}((L^u,L^v)_{\theta,q}).
\end{equation}

Conversely,  if   $ p_i  \le q< \infty$, $ 1\le i\le m$,
then
\begin{equation}\label{eq:interpolation:m2}
(L^{\vec p}(L^{u}), L^{\vec p}(L^{v}))_{\theta,q}
   \hookleftarrow
   L^{\vec p}((L^u,L^v)_{\theta,q}).
\end{equation}

In particular,
whenever $0<q= p_1\!=\ldots=p_m\!<\infty$, we have
$
( L^{\vec p}(L^{u}), L^{\vec p}(L^{v}))_{\theta,q}
$ $=     L^{\vec p}((L^u,L^v)_{\theta,q})
$
with equivalent norms.
\end{Theorem}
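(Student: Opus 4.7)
The plan is to argue by induction on $m$, working directly from the $K$-functional rather than invoking the Bochner space formula of Proposition~\ref{prop:interpolation pq a}, because mixed-norm and Bochner spaces genuinely disagree when some exponent equals $\infty$ (cf.\ Proposition~\ref{prop:characterization vv Lebesgue} and Example~\ref{Ex:ex1}).

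For the base case $m=1$ I will treat the two embeddings separately. Given any decomposition $f=f_0+f_1$ with $f_0\in L^{p_1}(L^u)$, $f_1\in L^{p_1}(L^v)$, the pointwise bound $K(t,f(\cdot,y);L^u,L^v)\le \|f_0(\cdot,y)\|_{L^u}+t\|f_1(\cdot,y)\|_{L^v}$, followed by an $L^{p_1}_y$ quasi-norm and an infimum over decompositions, yields
\[
  \|K(t,f(\cdot,y);L^u,L^v)\|_{L^{p_1}_y}\lesssim K\bigl(t,f;L^{p_1}(L^u),L^{p_1}(L^v)\bigr).
\]
Setting $t=2^j$, multiplying by $2^{-j\theta}$, and interchanging $\ell^q_j$ with $L^{p_1}_y$ by the generalised Minkowski inequality (valid for $q\le p_1$) then gives \eqref{eq:interpolation:m}. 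For \eqref{eq:interpolation:m2} with $p_1\le q<\infty$ the strategy is to construct a jointly measurable near-optimal decomposition of $f$ by a Holmstedt-type truncation
\[
  f_0(x,y;t)=f(x,y)\chi^{}_{\{|f(x,y)|>\sigma(y,t)\}},\qquad f_1=f-f_0,
\]
where $\sigma(y,t)$ is read off from the decreasing rearrangement of $f(\cdot,y)$. Joint measurability is inherited from the measurable dependence of $\sigma(y,t)$ on the distribution function of $f(\cdot,y)$, and the resulting reverse estimate $K(t,f;L^{p_1}(L^u),L^{p_1}(L^v))\lesssim \|K(t,f(\cdot,y);L^u,L^v)\|_{L^{p_1}_y}$, combined with the reverse Minkowski inequality (valid for $p_1\le q$), closes the base case.

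The inductive step decomposes $L^{\vec p}=L^{p_m}(L^{(p_1,\ldots,p_{m-1})})$ and applies the $m=1$ result with outer exponent $p_m$ and inner spaces $X_0=L^{(p_1,\ldots,p_{m-1})}(L^u)$, $X_1=L^{(p_1,\ldots,p_{m-1})}(L^v)$ to push the interpolation bracket past the outermost $L^{p_m}$; the inductive hypothesis then pushes it past the remaining $L^{(p_1,\ldots,p_{m-1})}$. The joint condition $q\le\min_i p_i$ (respectively $p_i\le q$ for every $i$) ensures the Minkowski step at every level, and the equality case $0<q=p_1=\ldots=p_m<\infty$ follows by combining the two inclusions. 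The principal obstacle I expect is the joint measurability of the decomposition in the reverse direction: a general Cwikel--Pisier style measurable selection would only produce Bochner measurability in $y$ and could not be iterated through the mixed-norm tower, so the argument must exploit the concrete structure of $L^u$ and $L^v$ as above.
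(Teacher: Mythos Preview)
Your forward-embedding argument is essentially the paper's: bound $K(t,f(\cdot,y);L^u,L^v)$ pointwise by the sum coming from any decomposition, take the $L^{\vec p}_y$ norm, take the infimum, then swap $\ell^q$ and $L^{\vec p}$ via Minkowski. The paper does this in one shot for the full vector $y=(y_1,\ldots,y_m)$ rather than by induction, which is cleaner because the measurability of $y\mapsto K(t,f(\cdot,y);L^u,L^v)$ (Lemma~\ref{Lm:measurability}) is only established for the concrete couple $(L^u,L^v)$; your inductive step would need the analogous statement for the intermediate couples $(X_0,X_1)=(L^{(p_1,\dots,p_{m-1})}(L^u),L^{(p_1,\dots,p_{m-1})}(L^v))$, which you have not proved.

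The real gap is in the reverse direction. Your inductive step requires the embedding
\[
  L^{p_m}\bigl((X_0,X_1)_{\theta,q}\bigr)\hookrightarrow \bigl(L^{p_m}(X_0),L^{p_m}(X_1)\bigr)_{\theta,q}
\]
for the \emph{abstract} couple $(X_0,X_1)$ above, but your base-case proof via Holmstedt truncation uses the pointwise level-set structure of $L^u$ and $L^v$ and does not generalise to such $X_0,X_1$; you yourself flag this at the end. So the induction does not close. The fix is simple: drop the induction entirely and run the truncation argument directly with $y=(y_1,\ldots,y_m)$. The cut $f_0=f\chi_{\{|f|>\sigma(y,t)\}}$ is jointly measurable in $(x,y)$ for all $m$ at once, and the quasi-triangle identity $\|g+h\|_{L^{\vec p}}\approx\|g\|_{L^{\vec p}}+\|h\|_{L^{\vec p}}$ for nonnegative $g,h$ yields $K(t,f;L^{\vec p}(L^u),L^{\vec p}(L^v))\lesssim \|K(t,f(\cdot,y);L^u,L^v)\|_{L^{\vec p}_y}$ directly; then apply Minkowski with $p_i\le q$. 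The paper takes a different non-inductive route: it first checks the reverse inequality on finite sums $\sum_i a_i(x)\chi_{E_i}(y)$ by decomposing each $a_i$ near-optimally, and then invokes the density of such functions in $L^{\vec p}((L^u,L^v)_{\theta,q})$ (Proposition~\ref{prop:density}, using that $(L^u,L^v)_{\theta,q}=L^{p_\theta,q}$ is separable when $\min\{u,v\}<\infty$). This sidesteps measurable selection altogether; your truncation approach is more explicit but needs the joint-measurability verification you mention.
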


Recall that mixed-norm and Bochner spaces
contain functions with different measurability.
So
Theorem~\ref{thm:mixed Lp} is an analog,
but not a consequence, of
Proposition~\ref{prop:interpolation pq a}.

To prove Theorem~\ref{thm:mixed Lp}, we need to show
that $\|f(\cdot,y)\|_{(L^u,L^v)_{\theta,q}}$ is a measurable
function of $y$ whenever $f $ is   measurable.
Although  $\|f(\cdot,y)\|_{L^{p_{\theta},q}}$ is measurable
and $\|f(\cdot,y)\|_{(L^u,L^v)_{\theta,q}}$
$
\approx \|f(\cdot,y)\|_{L^{p_{\theta},q}}$,
where $1/p_{\theta}= (1-\theta)/u+\theta/v$,
the measurability of $\|f(\cdot,y)\|_{(L^u,L^v)_{\theta,q}}$
is not obvious.

It is well known that for a normed space,
the convergence $\|x_n-x\|\rightarrow 0$ implies
$\|x_n\|\rightarrow \|x\|$.
We show that the same is true for
the quasi-norm
$f\mapsto K(t,f;L^u,L^v)$.

\begin{Lemma}\label{Lm:quasi-norm}
Let $0<u,v\le \infty$.
Suppose that $f,f_n\in L^u+L^v$
and for some $t_0>0$, $\lim_{n\rightarrow\infty}K(t_0,f_n-f;L^u,L^v)=0$.
Then for any $t>0$, $\lim_{n\rightarrow\infty}K(t ,f_n-f;L^u,L^v)=0$
and $\lim_{n\rightarrow\infty}K(t,f_n;L^u,L^v)=K(t,f;L^u,L^v)$.
\end{Lemma}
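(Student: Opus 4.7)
The plan is to split the argument into two parts. For the first assertion, the monotonicity-scaling inequality (\ref{eq:ec:e31}) immediately gives
\[
  K(t,f_n-f;L^u,L^v)\le \max\{1,t/t_0\}\,K(t_0,f_n-f;L^u,L^v)\to 0,
\]
so $K(t,f_n-f;L^u,L^v)\to 0$ for every $t>0$. This reduces the second assertion to showing that, for each fixed $t>0$, the functional $K(t,\cdot;L^u,L^v)$ is continuous at $f$ with respect to convergence in its own quasi-norm.

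For the second part I would use approximate decompositions together with continuity of the individual $L^u$- and $L^v$-quasi-norms. Given $\varepsilon>0$, choose $f=f_0+f_1$ with $\|f_0\|_{L^u}+t\|f_1\|_{L^v}\le K(t,f;L^u,L^v)+\varepsilon$. For each $n$, choose a decomposition $f_n-f = h_0^{(n)}+h_1^{(n)}$ with $\|h_0^{(n)}\|_{L^u}+t\|h_1^{(n)}\|_{L^v}\le K(t,f_n-f;L^u,L^v)+1/n$. The crucial choice of the error $1/n$ (rather than a fixed $\varepsilon$) combines with the first part to force $\|h_0^{(n)}\|_{L^u}\to 0$ and $\|h_1^{(n)}\|_{L^v}\to 0$. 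Since $f_n=(f_0+h_0^{(n)})+(f_1+h_1^{(n)})$, the definition of $K$ gives
\[
  K(t,f_n;L^u,L^v)\le \|f_0+h_0^{(n)}\|_{L^u}+t\|f_1+h_1^{(n)}\|_{L^v}.
\]
Setting $r_0=\min(u,1)$ and $r_1=\min(v,1)$, the pointwise inequality $|a+b|^{r_i}\le |a|^{r_i}+|b|^{r_i}$ upgrades to the $r_i$-power triangle inequality in $L^{u}$ and $L^v$, from which $\bigl|\|f_0+h_0^{(n)}\|_{L^u}^{r_0}-\|f_0\|_{L^u}^{r_0}\bigr|\le \|h_0^{(n)}\|_{L^u}^{r_0}\to 0$, and analogously in $L^v$. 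Thus $\|f_0+h_0^{(n)}\|_{L^u}\to \|f_0\|_{L^u}$ and $\|f_1+h_1^{(n)}\|_{L^v}\to \|f_1\|_{L^v}$. Taking $\limsup$ in $n$ and then letting $\varepsilon\to 0$ yields $\limsup_n K(t,f_n;L^u,L^v)\le K(t,f;L^u,L^v)$. Swapping the roles of $f$ and $f_n$ (decomposing $f_n$ approximately and writing $f=f_n+(f-f_n)$) produces the matching bound $K(t,f;L^u,L^v)\le \liminf_n K(t,f_n;L^u,L^v)$, which gives the desired limit.

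The main subtlety is the case when $u<1$ or $v<1$: then $L^u$ is only quasi-Banach, $K(t,\cdot;L^u,L^v)$ is only a quasi-norm, and its equivalence with some genuine $r$-norm does not by itself yield equal limits (only limits up to an unavoidable multiplicative constant). The device of letting the error for the decomposition of $f_n-f$ tend to zero with $n$ is exactly what saves the argument: it forces each component $h_i^{(n)}$ to vanish in its quasi-norm, at which point the $r_i$-power triangle inequality delivers genuine continuity of $g\mapsto \|f_i+g\|_{L^{u_i}}$ at $g=0$, not merely continuity up to a constant.
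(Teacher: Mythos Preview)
Your proposal is correct and follows essentially the same strategy as the paper's proof: reduce to a fixed $t$ via (\ref{eq:ec:e31}), then decompose $f$ (resp.\ $f_n$) near-optimally and the difference $f_n-f$ with vanishing norms, and control the perturbation of each component quasi-norm. The only cosmetic difference is that you handle the quasi-Banach case uniformly via the $r$-power triangle inequality $\bigl|\|a+b\|_{L^u}^{r_0}-\|a\|_{L^u}^{r_0}\bigr|\le\|b\|_{L^u}^{r_0}$ with $r_0=\min(u,1)$, whereas the paper separates the cases and, for $u<1$, bounds $(\|f_{n,0}\|_{L^u}^u+\|h_{n,0}\|_{L^u}^u)^{1/u}-\|f_{n,0}\|_{L^u}$ directly via Lagrange's mean value theorem.
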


\begin{proof}
For brevity, we write $K(t,f)$ instead of $K(t,f;L^u,L^v)$.
We see from (\ref{eq:ec:e31}) and the hypothesis that
for any $t>0$,
$\lim_{n\rightarrow\infty}K(t ,f_n-f)=0$.
Whenever $u,v\ge 1$, $K(t,f)$ is sub-additive
and therefore the conclusion is obvious.

In the followings  we only consider the case
$u,v<1$. The cases $u<1\le v$ and $v<1\le u$ can be proved similarly.

Fix some $t>0$. since $K(t,f_n)\le C(K(t,f)+K(t,f_n-f))$, we have
\[
 M:=\sup_{n\ge 1} K(t,f_n)<\infty.
\]
For any $ \varepsilon>0$ and $n\ge 1$,
there exist some $f_{n,0}\in L^u$
and $f_{n,1}\in L^v $ such that $f_n = f_{n,0}+f_{n,1}$ and
\begin{equation}\label{eq:ec:e33}
  \|f_{n,0}\|_{L^u} + t\|f_{n,1}\|_{L^v} \le (1+\varepsilon)K(t,f_n).
\end{equation}
And for $n$ large enough,  there exist
some $h_{n,0}\in L^u$
and $h_{n,1}\in L^v $ such that  $f-f_n = h_{n,0}+h_{n,1}$ and
\[
  \|h_{n,0}\|_{L^u} + t\|h_{n,1}\|_{L^v} < \varepsilon.
\]
Note that
\begin{align}
\|f_{n,0} + h_{n,0}\|_{L^u} - \|f_{n,0}\|_{L^u}
&\le (\|f_{n,0}\|_{L^u}^u + \|h_{n,0}\|_{L^u}^u)^{1/u}
-
\|f_{n,0}\|_{L^u}. \label{eq:ec:e32}
\end{align}
By Lagrange's mean value theorem, there exists some $\xi$ between
$\|f_{n,0}\|_{L^u}^u + \|h_{n,0}\|_{L^u}^u$
and $\|f_{n,0}\|_{L^u}^u $ such that
\begin{align}
  (\|f_{n,0}\|_{L^u}^u + \|h_{n,0}\|_{L^u}^u)^{1/u}
-
\|f_{n,0}\|_{L^u}
&= \frac{1}{u} \xi^{1/u-1} \|h_{n,0}\|^u_{L^u} \nonumber \\
&\le \frac{1}{u} (\|f_{n,0}\|_{L^u}^u + \|h_{n,0}\|_{L^u}^u)^{1/u-1} \|h_{n,0}\|^u_{L^u}
  \nonumber \\
&\le\frac{1}{u} ((1+\varepsilon)^uM^u+\varepsilon^u)^{1/u-1} \varepsilon^u.  \nonumber
\end{align}
By (\ref{eq:ec:e32}), we get
\[
  \|f_{n,0} + h_{n,0}\|_{L^u} - \|f_{n,0}\|_{L^u}
  \le \frac{1}{u} ((1+\varepsilon)^uM^u+\varepsilon^u)^{1/u-1} \varepsilon^u.
\]
Similarly,
\[
  t(\|f_{n,1} + h_{n,1}\|_{L^v} - \|f_{n,1}\|_{L^v})
  \le \frac{t}{v} ((1+\varepsilon)^vM^v+\varepsilon^v)^{1/v-1} \varepsilon^v.
\]
Hence
\begin{align*}
K(t,f)
&\le \|f_{n,0} + h_{n,0}\|_{L^u} +t(\|f_{n,1} + h_{n,1}\|_{L^v}) \\
&\hskip-8mm\le \|f_{n,0}\|_{L^u} + t\|f_{n,1}\|_{L^v}
   + \frac{1}{u} ((1+\varepsilon)^uM^u\!+\!\varepsilon^u)^{\frac{1}{u}-1} \varepsilon^u
   + \frac{t}{v} ((1+\varepsilon)^vM^v\!+\!\varepsilon^v)^{\frac{1}{v}-1} \varepsilon^v.
\end{align*}
Letting $n\rightarrow\infty$ and $\varepsilon\rightarrow 0$ successively, we see from (\ref{eq:ec:e33}) that
\[
K(t,f)
\le
  \liminf_{n\rightarrow\infty}   K(t,f_n).
\]
Similar arguments show that $  \limsup_{n\rightarrow\infty}   K(t,f_n)\le K(t,f)$. Now we get the conclusion as desired.
\end{proof}

The following is a representation of the $K$-functional.

%

\begin{Lemma}\label{Lm:K-representation}
For any $0<u,v\le \infty$, $t>0$ and   measurable function $f$,
\begin{align*}
  K(t,f;L^u,L^v)
  &= \inf\{\|f_0\|_{L^u}+t\|f_1\|_{L^v}:\,
 f=f_0+f_1, f_0 \bar f_1=|f_0f_1| \} \\
 & = \inf\{\|c(x)f(x)\|_{L^u}+t\|(1-c(x))f(x)\|_{L^v}:\,
  0\le c(x)\le 1 \}.
\end{align*}
where $\bar f$ is the complex conjugate of $f$.
As a result, $K(t,f;L^u,L^v) = K(t,|f|;L^u,L^v)$.
\end{Lemma}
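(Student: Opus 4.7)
The plan is to denote the three quantities in the asserted chain of equalities by $K$, $K_1$ and $K_2$ respectively (so $K_2$ is the infimum over $c$-decompositions, $K_1$ the infimum over ``aligned phase'' decompositions, and $K$ the usual $K$-functional), and show $K_2 \le K \le K_1 \le K_2$.

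First, I would dispose of the easy direction $K \le K_1 \le K_2$. For $K_1 \le K_2$, observe that if $f_0 = c(x) f(x)$ and $f_1 = (1-c(x))f(x)$ with $0\le c\le 1$, then $f_0\bar f_1 = c(1-c)|f|^2 \ge 0 = |f_0f_1|$, so every $c$-decomposition is an admissible ``aligned phase'' decomposition. The inequality $K\le K_1$ is immediate since every $K_1$-decomposition is an $L^u + L^v$ decomposition.

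The core step is to prove $K_2 \le K$. Given any decomposition $f = g_0 + g_1$ with $g_0\in L^u$, $g_1\in L^v$, I will produce a pointwise function $c(x) \in [0,1]$ such that $|c(x) f(x)| \le |g_0(x)|$ and $|(1-c(x))f(x)| \le |g_1(x)|$ almost everywhere. The natural candidate is
\[
c(x) = \begin{cases} \min\bigl(1, |g_0(x)|/|f(x)|\bigr), & f(x)\ne 0,\\ 0, & f(x)=0.\end{cases}
\]
The inequality $c|f| \le |g_0|$ is built in; the inequality $(1-c)|f| \le |g_1|$ reduces to checking $c \ge 1 - |g_1|/|f|$, which in the non-trivial case $|g_0| < |f|$ is exactly the triangle inequality $|g_0| + |g_1| \ge |f|$ that holds because $f = g_0 + g_1$. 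The solidity of $L^u$ and $L^v$ then gives $\|cf\|_{L^u} \le \|g_0\|_{L^u}$ and $\|(1-c)f\|_{L^v} \le \|g_1\|_{L^v}$; taking the infimum over decompositions finishes this direction.

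I expect no serious obstacle: the only point requiring mild care is measurability of $c(x)$ (clear since it is a Borel function of the measurable function $g_0/f$) and handling the exceptional set $\{f = 0\}$, where any choice works. Finally, since every $K_2$-admissible expression $\|c f\|_{L^u} + t\|(1-c)f\|_{L^v}$ depends on $f$ only through $|f|$ (as $|cf| = c|f|$ for $c\ge 0$ real), the identity $K(t,f;L^u,L^v) = K(t,|f|;L^u,L^v)$ is an immediate corollary.
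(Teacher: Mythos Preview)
Your proof is correct and follows essentially the same approach as the paper: given an arbitrary decomposition $f=f_0+f_1$, construct a $c$-decomposition with no greater cost, and observe that $c$-decompositions are automatically aligned. The only cosmetic difference is your choice $c(x)=\min(1,|g_0|/|f|)$ versus the paper's $c(x)=|f_0|/(|f_0|+|f_1|)$; both satisfy $c|f|\le|f_0|$ and $(1-c)|f|\le|f_1|$ via the triangle inequality, so the arguments are interchangeable.
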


\begin{proof}
Suppose that $f=f_0+f_1$.
Let
\begin{align*}
 c(x) &=\begin{cases}
   \frac{|f_0(x)|}{|f_0(x)| + |f_1(x)|},
    &   \mbox{ if } f(x)\ne 0, \\
   0, & \mbox{ otherwise},
 \end{cases}\\
\tilde f_0(x) &= c(x) f(x)
\mbox{\quad and \quad }
\tilde f_1(x) = (1-c(x))f(x).
\end{align*}
Then we have $f = \tilde f_0 + \tilde f_1$,
$\tilde f_0 \bar{\tilde f}_1 = |\tilde f_0 \bar{\tilde f}_1|$
and $|\tilde f_i| \le |f_i|$, $i=0,1$.
Hence $\|\tilde f_0\|_{L^u}+t\|\tilde f_1\|_{L^v}\le
\|f_0\|_{L^u}+ t\|f_1\|_{L^v}$.
Now the conclusion follows.
\end{proof}

Next we show that the monotone convergence theorem is true
on $L^u+L^v$.

\begin{Lemma} \label{Lm:monotone}
Suppose that $0<u,v\le \infty$.
Let
$\{f_n(x):\, n\ge 1\}$ be a sequence of non-negative
measurable functions which are increasing almost everywhere.
Then for any $t>0$,
\[
  \lim_{n\rightarrow\infty} K(t,f_n;L^u,L^v)
   =
   K(t,\lim_{n\rightarrow\infty} f_n;L^u,L^v).
\]
As a consequence, the monotone convergence theorem is true
on $L^u+L^v$.
\end{Lemma}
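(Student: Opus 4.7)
The plan is to leverage Lemma~\ref{Lm:K-representation}'s convex-combination representation of $K(t,\cdot;L^u,L^v)$ together with monotone and dominated convergence in $L^u$ and $L^v$, and then invoke Lemma~\ref{Lm:quasi-norm} to obtain the full monotone convergence theorem on the quasi-Banach space $L^u+L^v$.

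From the representation $K(t,h;L^u,L^v)=\inf_{0\le c\le 1}(\|ch\|_{L^u}+t\|(1-c)h\|_{L^v})$, monotonicity is immediate: if $0\le g\le h$ a.e., then any feasible $c$ for $h$ yields $\|cg\|_{L^u}\le\|ch\|_{L^u}$ and $\|(1-c)g\|_{L^v}\le\|(1-c)h\|_{L^v}$, hence $K(t,g;L^u,L^v)\le K(t,h;L^u,L^v)$. Applied to $f_n\le f$, this shows $L:=\lim_n K(t,f_n;L^u,L^v)$ exists and satisfies $L\le K(t,f;L^u,L^v)$.

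For the reverse inequality $K(t,f)\le L$, assume $L<\infty$ (otherwise there is nothing to prove) and pick measurable $c_n:\bbR^N\to[0,1]$ via Lemma~\ref{Lm:K-representation} so that
\[
  \|c_nf_n\|_{L^u}+t\|(1-c_n)f_n\|_{L^v}\le K(t,f_n;L^u,L^v)+1/n.
\]
By Banach--Alaoglu, applied to $L^\infty(\bbR^N)$ as the dual of the separable space $L^1(\bbR^N)$, extract a subsequence with $c_n\overset{\ast}{\rightharpoonup}c$ for some measurable $c$ with $0\le c\le 1$ a.e. Pairing this with $f_n\uparrow f$ and the lower-semicontinuity of the $L^u,L^v$ quasi-norms, derive
\[
  \|cf\|_{L^u}+t\|(1-c)f\|_{L^v}\le\liminf_n\bigl(\|c_nf_n\|_{L^u}+t\|(1-c_n)f_n\|_{L^v}\bigr)\le L,
\]
and Lemma~\ref{Lm:K-representation} then forces $K(t,f;L^u,L^v)\le L$. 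The resulting equality $K(t,f_n)\to K(t,f)$, combined with Lemma~\ref{Lm:quasi-norm}, yields the stated monotone convergence theorem on $L^u+L^v$.

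The main obstacle is justifying the lower-semicontinuity estimate above across the full range $0<u,v\le\infty$, particularly in the quasi-Banach regime $0<u<1$ or $0<v<1$ where weak lower semicontinuity in the usual Banach sense breaks down. A robust workaround is to first prove the statement for truncations $\min(f_n,M)\chi_{B_R}$, where all functions lie in $L^u\cap L^v$ so that Egorov's theorem upgrades the weak-$\ast$ convergence of $c_n$ to pointwise a.e.\ convergence on large subsets, and then let $M,R\to\infty$ using the monotonicity from the second paragraph.
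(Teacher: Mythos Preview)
Your monotonicity argument for $L\le K(t,f)$ is clean, and the Banach--Alaoglu extraction of a weak-$\ast$ limit $c$ of the $c_n$ is legitimate. The gap lies in the workaround you propose for the quasi-Banach regime. Egorov's theorem takes a.e.\ convergence as a \emph{hypothesis} and upgrades it to almost-uniform convergence; it does not manufacture pointwise convergence from weak-$\ast$ convergence. Indeed, weak-$\ast$ convergence in $L^\infty$ need not imply a.e.\ convergence along any subsequence: on $[0,1]$, the functions $c_n=\chi^{}_{E_n}$ with $E_n=\bigcup_k[2k/2^n,(2k+1)/2^n]$ converge weak-$\ast$ to the constant $1/2$ but converge pointwise nowhere. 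With $f\equiv 1$ and $u<1$ the same example exhibits the failure of lower semicontinuity you flagged: $\|c_n\|_{L^u}=(1/2)^{1/u}<1/2=\|c\|_{L^u}$. So the key inequality $\|cf\|_{L^u}\le\liminf_n\|c_nf_n\|_{L^u}$ cannot be salvaged by truncation plus Egorov.

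There is also a circularity risk in the last step: once you know $K(t,\min(f,M)\chi^{}_{B_R})\le L$ for each $M,R$, you still need $K(t,f)=\sup_{M,R}K(t,\min(f,M)\chi^{}_{B_R})$, which is itself an instance of the monotone convergence you are proving. The paper avoids both difficulties by dispensing with compactness altogether: it uses explicit level-set splittings such as $f_n=f_n\chi^{}_{\{f_n>2\}}+f_n\chi^{}_{\{f_n\le 2\}}$ (when $v<\infty$), or tracks $\alpha_n=\|f_{n,1}\|_{L^\infty}$ along a convergent subsequence (when $v=\infty$), to show directly that $f\in L^u+L^v$, and then closes via dominated convergence and Lemma~\ref{Lm:quasi-norm}.
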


\begin{proof}
Denote $f =\lim_{n\rightarrow\infty}  f_n$.
Since $\{f_n:\, n\ge 1\}$ is increasing, we have
\[
 M:= \lim_{n\rightarrow\infty} K(t,f_n;L^u,L^v)
   \le
   K(t,f;L^u,L^v).
\]
So it suffices to show that $K(t,f;L^u,L^v)\le M$,
for which we only need to consider the case
$M<\infty$.

Observe that
\[
  K(t,f ;L^u,L^v) = t K(\frac{1}{t},f; L^v, L^u).
\]
Without loss of generality, we may further assume that
$u\le v$.
For brevity, we write $K(t,f )$ instead of
$K(t,f ;L^u,L^v)$.

First, we consider the case $v<\infty$.
By Lemma~\ref{Lm:K-representation}, for each $n\ge 1$, there exist some
$f_{n,0}\in L^u$
and $f_{n,1}\in L^v$ such that $f_n =f_{n,0}+f_{n,1}$,
$f_{n,0}, f_{n,1}\ge 0$ and
\begin{equation}\label{eq:ec:e23}
  \|f_{n,0}\|_{L^u} + t\|f_{n,1}\|_{L^v}
  \le 2K(t,f_n).
\end{equation}

Observe that
\begin{align*}
f_n \chi^{}_{\{ f_n > 2\}}
&= f_{n,0} \chi^{}_{\{ f_n > 2\}}
  +f_{n,1} \chi^{}_{\{  f_{n,1}  \le 1,  f_n > 2\}}
   +f_{n,1} \chi^{}_{\{ f_{n,1} > 1,  f_n > 2\}}  \\
&\le  2 f_{n,0} \chi^{}_{\{f_n> 2\}}
    +f_{n,1} \chi^{}_{\{ f_{n,1} > 1,  f_n > 2\}} .
\end{align*}
We have
\begin{align*}
\| f_n \chi^{}_{\{ f_n > 2\}}\|_{L^u}
&\lesssim \|
f_{n,0} \chi^{}_{\{ f_n > 2\}}\|_{L^u}
    +\|f_{n,1} \chi^{}_{\{ f_{n,1} > 1,  f_n > 2\}}\|_{L^u}
      \\
&\lesssim
  \|
f_{n,0}  \|_{L^u}
    +\|f_{n,1} \|_{L^v}^{v/u}\\
&\le 2M + \Big(\frac{2M}{t}\Big)^{v/u}.
\end{align*}
On the other hand,
\begin{align*}
\| f_n \chi^{}_{\{f_n\le 2\}}\|_{L^v}
&\lesssim \|f_{n,0} \chi^{}_{\{f_n\le 2\}}\|_{L^v}
   + \|f_{n,1}\chi^{}_{\{f_n\le 2\}} \|_{L^v} \\
&
  \lesssim   \|f_{n,0}  \|_{L^u}^{u/v}
   + \|f_{n,1}\|_{L^v}\\
&\le (2M)^{u/v} + \frac{2M}{t}  .
\end{align*}
Hence
\begin{align*}
& \| f_n \chi^{}_{\{f_n> 2\}}\|_{L^u}
 +t\| f_n \chi^{}_{\{f_n\le 2\}}\|_{L^v}
 \le C_{u,v,t,M},\quad n\ge 1 .
\end{align*}
Since $\{f_n:\, n\ge 1\}$ is increasing,
$ \{f \le 2\}\subset \{f_n\le 2\}$ for all $n\ge 1$. Hence
\begin{align*}
&\| f_n \chi^{}_{\{f_n> 2\}}\|_{L^u}
 +t\| f_n \chi^{}_{\{f\le 2\}}\|_{L^v}
   \le  C_{u,v,t,M},\quad n\ge 1 .
\end{align*}
Letting $n\rightarrow\infty$, we see from the monotone convergence
theorem that
\[
  \|f \chi^{}_{\{f > 2\}}\|_{L^u}
+
t\|f \chi^{}_{\{f \le 2\}}\|_{L^v}
<\infty.
\]
Consequently, $f\in L^u+L^v$.

Set $f_u = f \chi^{}_{\{f > 2\}}$
and $f_v = f \chi^{}_{\{f \le 2\}}$.
Then $f_u\in L^u$,
$f_v\in L^v$
and $f=f_u+f_v$. Hence
\begin{align*}
K(t,f-f_n )
&\le \|f_u - \frac{f_u}{f}f_n\|_{L^u}
   + t\|f_v - \frac{f_v}{f}f_n\|_{L^v}
\rightarrow 0,\quad n\rightarrow\infty,
\end{align*}
where we use Lebesgue's dominated convergence theorem and the hypothesis
that $f_n\rightarrow f$.
Now we see from Lemma~\ref{Lm:quasi-norm} that
 $\lim_{n\rightarrow\infty}K(t,f_n ) =
K(t,f )$.

Next we consider the case $0<u<v=\infty$.
For any $\varepsilon>0$ and $n\ge 1$,
since $K(t,f_n)\le M$, there exist some $f_{n,0}\in L^u$
and $f_{n,1}\in L^{\infty}$ such that
$f_n=f_{n,0}+f_{n,1}$ and
\begin{equation}\label{eq:ec:e20}
  \|f_{n,0}\|_{L^u} + t\|f_{n,1}\|_{L^{\infty}} \le M+\varepsilon.
\end{equation}
%
%
Denote $\alpha_n = \|f_{n,1}\|_{L^{\infty}}$. Observe that
\[
 0\le (f_n -\alpha_n) \chi^{}_{\{f_n>\alpha_n\}}
  =(f_{n,0}+f_{n,1} -\alpha_n) \chi^{}_{\{f_n>\alpha_n\}}
  \le f_{n,0} \chi^{}_{\{f_n>\alpha_n\}},\qquad a.e.
\]
It follows from (\ref{eq:ec:e20}) that
\begin{align}
\|  (f_n -\alpha_n) \chi^{}_{\{f_n>\alpha_n\}}\|_{L^u}
  + t\alpha_n \le M+\varepsilon,\qquad n\ge 1. \label{eq:ec:e21}
\end{align}
Hence $\{\alpha_n:\, n\ge 1\}$ is a bounded sequence.
Consequently, there is a convergent subsequence.
Without loss of generality, assume that
$\alpha_n\rightarrow \alpha$ as $n\rightarrow \infty$.

For any $k\ge 1$, there is some $n_k$ such that
\[
  |\alpha_{n_k} - \alpha | \le \frac{1}{k}.
\]
For any $k\ge k_0\ge 1$,
\begin{align*}
 \Big\| (f_{n_k} - \alpha - \frac{1}{k} )
\chi^{}_{\{f_{n_k} > \alpha+1/k_0 \}}\Big\|_{L^u}
  + t\alpha_{n_k}
&\le
\Big\| (f_{n_k} - \alpha_{n_k}  )
\chi^{}_{\{f_{n_k} \ge \alpha_{n_k}\}}\Big\|_{L^u}
  + t\alpha_{n_k}\\
&\le    M+\varepsilon ,
\end{align*}
where we use (\ref{eq:ec:e21}) in the last step.
Letting $k\rightarrow\infty$ and
$k_0\rightarrow\infty$ successively, we see from  the monotone convergence
theorem that
\[
  \Big\|  (f - \alpha)
\chi^{}_{\{f >  \alpha \}}\Big\|_{L^u}
  + t \alpha
  \le  M+\varepsilon.
\]
Hence
\begin{align*}
K(t,f)
 &\le \big\|  (f - \alpha)
\chi^{}_{\{f> \alpha \}}\big\|_{L^u}
  + t \big\|  f
\chi^{}_{\{f\le  \alpha \}}
  + \alpha\chi^{}_{\{f>\alpha \}}
\big\|_{L^{\infty}}  \\
&\le  \big\|  (f - \alpha)
\chi^{}_{\{f> \alpha \}}\big\|_{L^u} + t \alpha   \\
&\le   M+\varepsilon.
\end{align*}
Since $\varepsilon$ is arbitrary, we get $K(t,f)\le M$.

Finally, we consider the case $u=v=\infty$.
In this case,
$K(t,f) \!=\! \min\{1,t\}\|f\|_{L^{\infty}}$.
Hence $\lim_{n\rightarrow\infty} K(t,f_n) =
K(t,f)$. This completes the proof.
\end{proof}

It is well known that if $f(x,y)$ is a measurable function
and $0<p\le\infty$,
then $\|f(\cdot,y)\|_{L^p}$ is measurable as a function of
$y$. We show that the same is true whenever
the $L^p$ norm is replaced by some other quasi-norms.

\begin{Lemma}\label{Lm:measurability}
Let $f(x,y)$ be a   measurable function on
$\bbR^m\times \bbR^n$ such that
$|f(x,y)| $  is finite for almost all $(x,y)$.
Suppose that $0<u,v\le \infty$.
\begin{enumerate}
\item For any $t>0$, $K(t,f(\cdot,y);L^u,L^v)$ is measurable as a function of $y\in\bbR^n$.

\item $K(t,f(\cdot,y);L^u,L^v)$ is measurable
as a function of $(t,y)\in (0,\infty)\times \bbR^n$.

\item
For any $0<\theta<1$
 and $0<p\le \infty$,
$ \|f(\cdot,y)\|_{(L^u,L^v)_{\theta,p}}$
is  measurable  on $\bbR^n$.
\end{enumerate}
\end{Lemma}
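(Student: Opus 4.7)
The three parts are hierarchically linked, so once (i) is proven, (ii) and (iii) follow quickly. For (ii), the continuity of $t \mapsto K(t, f(\cdot, y); L^u, L^v)$ on $(0, \infty)$ given by (\ref{eq:ec:e31}) combines with measurability in $y$ from (i) via a standard Carath\'eodory argument: approximate $t$ by a dyadic step function $t_j \to t$; each $(t, y) \mapsto K(t_j(t), f(\cdot, y); L^u, L^v)$ is jointly measurable (piecewise constant in $t$, measurable in $y$ on each piece by (i)), and letting $j \to \infty$ uses continuity in $t$. For (iii), the definition (\ref{eq:theta p}) expresses $\|f(\cdot, y)\|_{(L^u, L^v)_{\theta, p}}$ as the $\ell^p$-norm of the countable sequence $\{2^{-n\theta} K(2^n, f(\cdot, y); L^u, L^v)\}_{n \in \bbZ}$, which, by (i), is either a countable sum (for $p < \infty$) or supremum (for $p = \infty$) of non-negative measurable functions of $y$.

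For (i), the plan is to establish measurability first at the simple-function level and then extend by monotone convergence. By Lemma~\ref{Lm:K-representation}, $K(t, f(\cdot, y); L^u, L^v) = K(t, |f(\cdot, y)|; L^u, L^v)$, so we may assume $f \ge 0$. Write $f = \lim_k s_k$ as an increasing pointwise limit of non-negative simple functions $s_k(x, y) = \sum_{i=1}^{r_k} a_{k,i} \chi_{E_{k,i}}(x, y)$ with pairwise disjoint $E_{k,i} \subset \bbR^m \times \bbR^n$ of finite measure. By Fubini-Tonelli, the slice measures $m_{k,i}(y) := |\{x : (x, y) \in E_{k,i}\}|$ are measurable in $y$. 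Since the $K$-functional depends only on the distribution function of $|s_k(\cdot, y)|$, we may write $K(t, s_k(\cdot, y); L^u, L^v) = F_k(m_{k,1}(y), \ldots, m_{k,r_k}(y))$ for some function $F_k : [0, \infty)^{r_k} \to [0, \infty]$. Representing $s_k(\cdot, y)$ via its decreasing rearrangement on $[0, \infty)$, one checks that as $(m_i) \to (m_i^0)$ the rearranged step functions converge in the $L^u + L^v$ quasi-norm (the perturbation is bounded and supported on a set of vanishing measure, which can be placed in whichever of $L^u$ or $L^v$ has finite index); since $K(t, \cdot; L^u, L^v)$ is quasi-norm continuous, this gives continuity of $F_k$. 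Composition with the measurable functions $m_{k,i}(\cdot)$ yields measurability of $y \mapsto K(t, s_k(\cdot, y); L^u, L^v)$, and Lemma~\ref{Lm:monotone} then passes to the limit $K(t, s_k(\cdot, y)) \uparrow K(t, f(\cdot, y))$, giving measurability of $y \mapsto K(t, f(\cdot, y); L^u, L^v)$.

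The main technical obstacle is the continuity of $F_k$ uniformly across the parameter ranges $0 < u, v \le \infty$, especially when $u$ or $v$ equals $\infty$: the $L^\infty$ norm of a bounded small-support perturbation need not vanish, but by splitting the perturbation across $L^u$ and $L^v$ within the $K$-functional infimum one still obtains convergence as long as at least one of $u, v$ is finite. The degenerate case $u = v = \infty$ does not arise in the intended applications of Theorem~\ref{thm:mixed Lp} and reduces directly to the classical measurability of $y \mapsto \|f(\cdot, y)\|_{L^\infty}$ via the identity $K(t, f; L^\infty, L^\infty) = \min(1, t)\|f\|_{L^\infty}$.
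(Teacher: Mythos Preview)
Your argument is correct and takes a genuinely different route from the paper for part (i). The paper first treats compactly supported bounded $f$ by approximating in $L^1(L^u)$ with simple functions of the separated form $\sum_i a_i(x)\chi_{E_i}(y)$ (Proposition~\ref{prop:density}), for which $K(t,f_k(\cdot,y))=\sum_i K(t,a_i)\chi_{E_i}(y)$ is transparently measurable, and passes to the limit via Lemma~\ref{Lm:quasi-norm}; the general $f$ is then handled by a second monotone truncation using Lemmas~\ref{Lm:K-representation} and~\ref{Lm:monotone}. You instead take joint simple functions $s_k$ on $\bbR^m\times\bbR^n$ increasing to $|f|$, invoke rearrangement invariance of $K(t,\cdot;L^u,L^v)$ to factor $K(t,s_k(\cdot,y))$ through a finite-dimensional function $F_k$ of the slice measures, argue continuity of $F_k$ (bounded perturbation on a set of vanishing measure, absorbed into whichever of $L^u,L^v$ has finite exponent, then Lemma~\ref{Lm:quasi-norm}), and apply Lemma~\ref{Lm:monotone} once. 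Your route avoids Proposition~\ref{prop:density} and the two-stage approximation, at the cost of importing rearrangement invariance of the $K$-functional for $(L^u,L^v)$ --- a standard fact (via measure-preserving maps between non-atomic spaces, or Holmstedt's formula) but not proved in the paper, so you should cite or state it explicitly. Your handling of (ii) by the Carath\'eodory argument and of (iii) through the $\ell^p$-definition is equivalent to the paper's.
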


\begin{proof}
We write $K(t,f(\cdot,y))$ instead of
$K(t,f(\cdot,y);L^u,L^v)$ for brevity.
It suffices to prove the first two items
since
(iii) is a consequence of (ii).

(i)\,\,
If $u=v=\infty$, then $K(t,f(\cdot,y)) =\min\{1,t\}\|f(\cdot,y)\|_{L^{\infty}}$,
which is measurable with respect to $(t,y)$.
It remains to consider the case $\min\{u,v\}<\infty$.
Without loss of generality, assume that $u<\infty$.

First, we assume that $f$ is a compactly supported
bounded function. Take some constant $A>0$ such that
$|f(x,y)|\le A$ for all $(x,y)\in \bbR^m\times\bbR^n$
and
$f(x,y)=0$ whenever $|x|+|y|>A$.
Then $f\in L^1(L^u)$.
By Proposition~\ref{prop:density},
there is a sequence
$\{f_k:\, k\ge 1\}\subset L^1(L^u )$ of the form
\[
  f_k(x,y) = \sum_{i=1}^{r} a_i(x) \chi^{}_{E_i}(y),
\]
where $a_i\in L^u $,
$E_i$ are  measurable sets in $\bbR^n$
which are pairwise disjoint and of finite measures,
$1\le i\le r$, and $r$ is a positive integer,
such that
\[
  \lim_{k\rightarrow \infty} \Big \| \|f_k(x,y) - f(x,y)\|_{L_x^u}\Big\|_{L_y^1}
  =
  \lim_{k\rightarrow \infty} \|f_k - f\|_{L^1(L^u )}
  =0.
\]
Hence there is a subsequence of $\{f_k:\, k\ge 1\}$,
say itself, such that for almost all $y\in\bbR^n$,
\[
 \lim_{k\rightarrow\infty} \|f_k(x,y) - f(x,y)\|_{L_x^u}=0.
\]
Consequently, for almost all $y\in\bbR^n$,
\begin{align*}
K(t,f(\cdot,y)-f_k(\cdot,y) )
\le \|f_k(x,y) - f(x,y)\|_{L_x^u} \rightarrow 0.
\end{align*}
It follows  from Lemma~\ref{Lm:quasi-norm} that
\[
  K(t,f(\cdot,y) )
  =\lim_{k\rightarrow\infty }  K(t,f_k(\cdot,y) ).
\]
Since for any $t>0$ and $k\ge 1$,
$K(t,f_k(\cdot,y) )
=\sum_{i=1}^r K(t,a_i )\cdot \chi^{}_{E_i}(y)$
is a measurable function on $\bbR^n$,
$K(t,f(\cdot,y)  )$
is also  measurable   on $\bbR^n$ for any $t>0$.

For the general case, set
\[
  f_k = f\cdot \chi^{}_{\{|x|,|y|\le k\}} \chi^{}_{\{|f|\le k\}}.
\]
Then the sequence $\{|f_k|:\, k\ge 1\}$ is convergent to
$|f|$ increasingly almost everywhere. By Lemmas~\ref{Lm:K-representation} and \ref{Lm:monotone},
for any $t>0$,
\[
  K(t,f(\cdot,y)) =
  K(t,|f(\cdot,y)|) = \lim_{k\rightarrow\infty} K(t,|f_k(\cdot,y)|),
  \quad a.e.\ y\in\bbR^n.
\]
Hence $K(t,f(\cdot,y) )$ is  measurable as a  function
of  $y\in \bbR^n$.

(ii)\,\,
By (\ref{eq:ec:e31}), $K(t,f(\cdot,y) ) $ is continuous with respect to
$t$. Hence for any $\alpha\in\bbR$,
\[
  \{(t,y):\, K(t,f(\cdot,y) )>\alpha\}
   = \bigcup_{r\in\mathbb Q^+}
     [r,\infty)\times \{y:\,  K(r,f(\cdot,y) )>\alpha\},
\]
where $\mathbb Q^+$ is the set of positive rational numbers.
Hence $K(t,f(\cdot,y) )$ is a measurable function
on $(0,\infty)\times \bbR^n$.
This completes the proof.
\end{proof}

We are now ready to give a proof of  Theorem~\ref{thm:mixed Lp}.

\begin{proof} [Proof of Theorem~\ref{thm:mixed Lp}]
Denote functions in $L^{\vec p}(L^u)$ by $f(x,y)$,
where $x\in\bbR^{n_0}$, $y=(y_1,\ldots,y_m)\in\bbR^{n_1+\ldots+n_m}$.

First, we  prove (\ref{eq:interpolation:m}).
Note that for any $0<s\le \infty$ and non-negative  functions $g,h\in L^s(\bbR^n)$,
\[
 \| g+h\|_{L^s} \approx \|g\|_{L^s} + \|h\|_{L^s} .
\]
For any $f  \in  (L^{\vec p}(L^{ u}),  L^{\vec p}(L^{v} ))_{\theta,q} $, we have
\begin{align*}
 \|f\|_{(L^{\vec p}(L^{ u}),  L^{\vec p}(L^{v} ))_{\theta,q}}
\hskip -40pt&\hskip 40pt
 = \left\|\{2^{-n\theta} K(2^n,f; L^{\vec p}(L^u),
   L^{\vec p}(L^v))\} \right\|_{\ell^q}\\
&= \left\|\Big\{2^{-n\theta}\inf\{ \|f_0\|_{L^{\vec p}(L^u)}
   + 2^n \|f_1\|_{L^{\vec p}(L^v)}:\, f=f_0+f_1 \} \Big\} \right\|_{\ell^q}\\
&\approx\left\|\Big\{2^{-n\theta}\inf\{ \big\|
  \|f_0(\cdot,y)\|_{L^u}
   + 2^n \|f_1(\cdot,y)\|_{L^v}\big\|_{L_y^{\vec p}}:\, f=f_0+f_1 \} \Big\} \right\|_{\ell^q}\\
&\ge \left\|\Big\{2^{-n\theta}
   \| K(2^n, f(\cdot,y); L^u,L^v)\|_{L_y^{\vec p}}
 \Big\} \right\|_{\ell^q}.
\end{align*}
Since $q\le \min\{p_i:\, 1\le i\le m\}$, it follows from
Minkowski's inequality that
\begin{align*}
  \left\|\Big\{2^{-n\theta}
   \| K(2^n, f(\cdot,y); L^u,L^v)\|_{L_y^{\vec p}}
 \Big\} \right\|_{\ell^q}
 &\ge  \left\| \big\| \big\{2^{-n\theta}
    K(2^n, f(\cdot,y); L^u,L^v)\big\} \big\|_{\ell^q}
        \right\|_{L_y^{\vec p}}\\
 &=\Big\|\|f(\cdot,y)\|_{(L^u,L^v)_{\theta,q}} \Big\|_{L^{\vec p}}.
\end{align*}
Hence
$
  \|f\|_{(L^{\vec p}(L^{ u}),  L^{\vec p}(L^{v} ))_{\theta,q}}
  \gtrsim
  \|f \|_{L^{\vec p}((L^u,L^v)_{\theta,q})}$.
This proves (\ref{eq:interpolation:m}).

Next we prove (\ref{eq:interpolation:m2}).
If $u=v=\infty$,
then $L^{\vec p}((L^u,L^v)_{\theta,q})
= L^{\vec p}(L^{\infty}) = (L^{\vec p}(L^u), L^{\vec p}(L^v))_{\theta,q}$.
Hence (\ref{eq:interpolation:m2}) is true.

Now we consider the case $\min\{u,v\}<\infty$.

Suppose that  $f(x,y) = \sum_{i=1}^k a_i(x) \chi^{}_{E_i}(y)
\in L^{\vec p}((L^u,L^v)_{\theta,q})$,
where  $a_i\in (L^u,L^v)_{\theta,q}$,
$E_i$ are pairwise disjoint measurable subsets of $\bbR^{n_1+\ldots+n_m}$
and
 $|E_i|<\infty$,
$1\le i\le k$.

For any $\varepsilon>0$ and $n\in\bbZ$,
there exist $a_{i,u}^{(n)}\in L^{u}$
and $a_{i,v}^{(n)}\in L^{v}$
such that $a_i = a_{i,u}^{(n)} + a_{i,v}^{(n)}$ and
\[
    \| a_{i,u}^{(n)}\|_{L^{u}}
  +2^n \| a_{i,v}^{(n)}\|_{L^{v}}
  \le (1+\varepsilon) K(2^n, a_i; L^{u}, L^{v}).
\]
Set
\[
 f_{n,u}(x,y) = \sum_{i=1}^k a_{i,u}^{(n)}(x) \chi^{}_{E_i}(y),
 \qquad
 f_{n,v} (x,y)= \sum_{i=1}^k a_{i,v}^{(n)} (x)\chi^{}_{E_i}(y).
\]
We have $f= f_{n,u}+f_{n,v}$, $n\in\bbZ$.
It follows  that
\begin{align}
  K(2^n,f;L^{\vec p}(L^{u}), L^{\vec p}(L^{v}))
&\le
\|f_{n,u}\|_{L^{\vec p}(L^{u})}
  + 2^n \|f_{n,v}\|_{L^{\vec p}(L^{v})} \nonumber \\
&\approx \Big \| \|f_{n,u}(\cdot,y)\|_{L^u}
   +  2^n\|f_{n,v}(\cdot,y)\|_{L^v}\Big\|_{L^{\vec p}} \nonumber \\
&=  \bigg\|\sum_{i=1}^k \Big (\|a_{i,u}^{(n)}\|_{L^{u}}
+2^n \| a_{i,v}^{(n)}\|_{L^{v}}\Big)
   \chi^{}_{E_i}(y)  \bigg\|_{L^{\vec p}} \nonumber \\
&\le (1+\varepsilon)   \bigg\|\sum_{i=1}^k  K(2^n, a_i; L^{u}, L^{v})
   \chi^{}_{E_i}(y)  \bigg\|_{L^{\vec p}}  \nonumber \\
&= (1+\varepsilon)    \| K(2^n, f(\cdot,y); L^{u}, L^{v})
    \|_{L^{\vec p}}. \label{eq:ec:e36}
\end{align}
Hence
\begin{align*}
\|f\|_{(L^{\vec p}(L^{u}),L^{\vec p}(L^{v}))_{\theta,q}}
&=
 \bigg(\sum_{n\in\bbZ} \Big( 2^{-n\theta} K(2^n,f;L^{\vec p}(L^{u}), L^{\vec p}(L^{v}))\Big)^q
 \bigg)^{1/q} \\
& \lesssim (1+\varepsilon)
\bigg(\sum_{n\in\bbZ} \Big( 2^{-n\theta} \| K(2^n, f(\cdot,y); L^{u}, L^{v})
    \|_{L^{\vec p}} \Big)^q
 \bigg)^{1/q} .
 \end{align*}
Since $p_i\le q$ for all $1\le i\le m$, we see from Minkowski's inequality that
\begin{align}
\|f\|_{(L^{\vec p}(L^{u}),L^{\vec p}(L^{v}))_{\theta,q}}
&\lesssim (1+\varepsilon)
 \bigg\|\bigg( \sum_{n\in\bbZ} \Big( 2^{-n\theta}
  K(2^n, f(\cdot,y); L^{u}, L^{v})
      \Big)^q
 \bigg)^{1/q}\bigg \|_{L^{\vec p}}\nonumber \\
&= (1+\varepsilon)   \|f\|_{L^{\vec p}((L^u,L^v)_{\theta,q})}.\nonumber
 \end{align}
Note that $(L^u,L^v)_{\theta,q} = L^{p_{\theta},q}$
is separable, where $1/p_{\theta} = (1-\theta)/u+\theta/v$.
By Proposition~\ref{prop:density},
functions of the form
$\sum_{i=1}^k a_i (x)\chi^{}_{E_i}(y)$ are dense in
$L^{\vec p}((L^u,L^v)_{\theta,q}) $. It follows from the above inequality that  (\ref{eq:interpolation:m2}) is true.
\end{proof}


\section{Proof of The Main Results}

It is well known that for any $f\in L^p$ with $1\le p<\infty$, $\lim_{|y|\rightarrow\infty} \|f+f(\cdot-y)\|_p=2^{1/p} \|f\|_p$.
For Lebesgue spaces with mixed norms, we show that the limit is path dependent.

\begin{Lemma}[{\cite[Lemma 2.3]{ChenSun2019}}]\label{Lm: translation}
Let $y = (y_1,\ldots,y_m)\in \bbR^{N_m}$. Suppose that
$y_k\ne 0$
for some $1\le k\le m$  and $y_i=0$ for all $k+1\le i\le m$.     Then for any $f\in L^{\vec p}(\bbR^{N_m})$, where $\vec p=(p_1, \ldots, p_m)$ with $1\le p_i<\infty$, $1\le i\le m$, we have
\[
  \lim_{|y_k|\rightarrow \infty} \| f(\cdot -  y) + f\|_{L^{\vec p}}
  =
  2^{1/p_k}\|f\|_{L^{\vec p}}.
\]
\end{Lemma}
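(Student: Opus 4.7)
The plan is to first establish the identity \emph{exactly} for $f$ whose $x_k$-support is compact, then pass to general $f$ by approximation.

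Write $u=(x_1,\ldots,x_{k-1})$, $v=(x_{k+1},\ldots,x_m)$, and set $g(x)=f(x-y)$. Because $y_{k+1}=\cdots=y_m=0$, we have $g(u,x_k,v)=f(u-(y_1,\ldots,y_{k-1}),x_k-y_k,v)$. Define
\[
  \Psi_h(x_k,v) := \|h(\cdot,x_k,v)\|_{L_u^{(p_1,\ldots,p_{k-1})}}.
\]
Translation invariance of this inner mixed norm in $u$ gives $\Psi_g(x_k,v)=\Psi_f(x_k-y_k,v)$, and translation invariance of $L_{x_k}^{p_k}$ then yields $\|\Psi_g(\cdot,v)\|_{L_{x_k}^{p_k}}=\|\Psi_f(\cdot,v)\|_{L_{x_k}^{p_k}}$.

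Now suppose $f$ is supported in $\{|x_k|\le R\}\times\bbR^{N_m-n_k}$. Then $\Psi_f(x_k,v)=0$ for $|x_k|>R$ and $\Psi_g(x_k,v)=0$ for $|x_k-y_k|>R$, so once $|y_k|>2R$ these two functions of $x_k$ have disjoint supports. Consequently, for a.e.\ $x_k$ at most one of $f(\cdot,x_k,v)$, $g(\cdot,x_k,v)$ is nontrivial on $u$-space, which gives
\[
  \Psi_{f+g}(x_k,v)^{p_k}=\Psi_f(x_k,v)^{p_k}+\Psi_g(x_k,v)^{p_k}\qquad\text{a.e.}
\]
Integrating in $x_k$ and using $\|\Psi_g(\cdot,v)\|_{L_{x_k}^{p_k}}=\|\Psi_f(\cdot,v)\|_{L_{x_k}^{p_k}}$ gives
\[
  \|\Psi_{f+g}(\cdot,v)\|_{L_{x_k}^{p_k}}^{p_k}=2\,\|\Psi_f(\cdot,v)\|_{L_{x_k}^{p_k}}^{p_k}.
\]
Applying the outer $L_v^{(p_{k+1},\ldots,p_m)}$-norm then yields the \emph{exact} equality $\|f+g\|_{L^{\vec p}}=2^{1/p_k}\|f\|_{L^{\vec p}}$ for every $|y_k|>2R$.

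For general $f\in L^{\vec p}$, since all $p_i<\infty$, the truncation $f_R:=f\chi^{}_{\{|x_k|\le R\}}$ converges to $f$ in $L^{\vec p}$ as $R\to\infty$ by iterated dominated convergence. Given $\varepsilon>0$, choose $R$ with $\|f-f_R\|_{L^{\vec p}}<\varepsilon$ and set $g_R(x)=f_R(x-y)$. Translation invariance of $\|\cdot\|_{L^{\vec p}}$ together with Minkowski's inequality (valid since $p_i\ge 1$) yields
\[
  \bigl|\|f+g\|_{L^{\vec p}}-\|f_R+g_R\|_{L^{\vec p}}\bigr|\le 2\varepsilon,\qquad
  \bigl|\|f\|_{L^{\vec p}}-\|f_R\|_{L^{\vec p}}\bigr|\le\varepsilon.
\]
Combining these with the equality from the previous step applied to $f_R$ for $|y_k|>2R$, and letting $\varepsilon\to 0$, produces the desired limit.

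The only real obstacle is packaging the disjoint-support identity cleanly through the iterated norm; once translation invariance of the inner block $L_u^{(p_1,\ldots,p_{k-1})}$ is used to collapse the first $k-1$ variables, the argument reduces to the classical scalar $L^{p_k}$ identity in $x_k$. The hypothesis $y_i=0$ for $i>k$ is essential because it leaves the outer norm $L_v^{(p_{k+1},\ldots,p_m)}$ undisturbed, while finiteness of every $p_i$ is needed both for the truncation to approximate $f$ in norm and for the disjoint-support identity to become a genuine equality of $p_k$-th powers.
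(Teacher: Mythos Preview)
Your proof is correct. The paper does not actually prove this lemma---it is quoted from Chen--Sun (2019), Lemma~2.3, without argument---so there is no in-paper proof to compare against. Your approach (truncate in the $x_k$-variable to reduce to $f$ with compact $x_k$-support, use disjoint supports to obtain the exact identity $\|f+g\|_{L^{\vec p}}=2^{1/p_k}\|f\|_{L^{\vec p}}$ for $|y_k|>2R$, then pass to general $f$ by density and translation invariance) is the natural adaptation of the classical scalar $L^p$ argument to the mixed-norm setting, and is essentially what one expects the cited proof to contain.
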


The following lemma gives a testing condition
for the boundedness of $I_{\lambda}$
when $p_m=1$.

\begin{Lemma}\label{Lm:end points}
Suppose that $\vec p = (p_1,\ldots,p_m)$
and $\vec q=(q_1,\ldots,q_m)$ with
$1\le p_i, q_i\le \infty$ 
satisfy (\ref{eq:homo:m}), $1\le i\le m$.
If there is some $1\le i_1\le m-1$ such that $p_{i_1+1}=\ldots=p_m=1$, then $I_{\lambda}$ is bounded from $L^{\vec p}$ to $L^{\vec q}$ if and only if
\begin{equation}\label{eq:e2}
  \Big \| \int_{\bbR^{N_{i_1}}}
     \frac{f(y_1,\ldots,y_{i_1}) dy_1\ldots dy_{i_1}}{(\sum_{i=1}^{i_1}
     |x_i-y_i| + \sum_{i={i_1+1}}^m |x_i|)^{\lambda}}
     \Big\|_{L^{\vec q}}
     \le C_{\vec p, \vec q} \|f\|_{L^{\vec {\tilde p}}},
\end{equation}
where  $\vec{\tilde p}=(p_1,\ldots,p_{i_1})$ and $N_{i_1} = n_1+\ldots+n_{i_1}$.

When the condition is satisfied, we have
$
  \|I_{\lambda}\|_{L^{\vec p}\rightarrow L^{\vec q}}
  \le C_{\vec p, \vec q}.
$
\end{Lemma}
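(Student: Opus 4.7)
I address the two implications separately, working with nonnegative $f$ by positivity of the kernel, and write $y'=(y_1,\dots,y_{i_1})$, $y''=(y_{i_1+1},\dots,y_m)$, and similarly for $x',x''$.

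For the sufficiency direction, decompose the defining integral of $I_\lambda f$ as an outer integral in $y''$ and an inner integral in $y'$:
\[
I_\lambda f(x',x'')=\int_{\bbR^{N_m-N_{i_1}}}J_{y''}(x',x'')\,dy'',\qquad J_{y''}(x',x'')=\int\frac{f(y',y'')\,dy'}{\bigl(\sum_{i\le i_1}|x_i-y_i|+\sum_{i>i_1}|x_i-y_i|\bigr)^\lambda}.
\]
Minkowski's integral inequality, valid because $q_i\ge 1$ for every $i$, brings $\|\cdot\|_{L^{\vec q}}$ inside the outer integral. Translation invariance of the mixed norm in the $x''$-block then justifies the substitution $x_i\mapsto x_i+y_i$ for $i>i_1$ inside $\|J_{y''}\|_{L^{\vec q}}$, replacing each $|x_i-y_i|$ by $|x_i|$ in the denominator. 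The resulting quantity is precisely the left-hand side of (\ref{eq:e2}) applied to $g(\cdot)=f(\cdot,y'')$, so $\|J_{y''}\|_{L^{\vec q}}\le C_{\vec p,\vec q}\|f(\cdot,y'')\|_{L^{\vec{\tilde p}}}$. Since $p_{i_1+1}=\dots=p_m=1$, the outer norms in $\|\cdot\|_{L^{\vec p}}$ are all $L^1$ and collapse by Fubini, so integrating in $y''$ yields $\|I_\lambda f\|_{L^{\vec q}}\le C_{\vec p,\vec q}\|f\|_{L^{\vec p}}$ with the same constant, which also gives the final assertion of the lemma.

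For necessity, test the bounded operator $I_\lambda$ against $F_\varepsilon(y',y''):=f(y')\phi_\varepsilon(y'')$, where $\phi$ is a fixed nonnegative smooth compactly supported bump on $\bbR^{N_m-N_{i_1}}$ with $\int\phi=1$ and $\phi_\varepsilon(y''):=\varepsilon^{-(N_m-N_{i_1})}\phi(y''/\varepsilon)$. The tensor-product structure of $F_\varepsilon$ combined with $p_{i_1+1}=\dots=p_m=1$ yields $\|F_\varepsilon\|_{L^{\vec p}}=\|f\|_{L^{\vec{\tilde p}}}$. For every $(x',x'')$ with $x_i\ne 0$ for all $i>i_1$ (a full-measure condition in $x''$), dominated convergence applied to the $y''$-integral shows that
\[
I_\lambda F_\varepsilon(x',x'')\longrightarrow\int\frac{f(y')\,dy'}{\bigl(\sum_{i\le i_1}|x_i-y_i|+\sum_{i>i_1}|x_i|\bigr)^\lambda}\qquad(\varepsilon\to 0^+).
\]
Applying Fatou's lemma iteratedly through the mixed norm $\|\cdot\|_{L^{\vec q}}$, from the innermost $L^{q_1}_{x_1}$ outward, produces (\ref{eq:e2}) with constant $\|I_\lambda\|_{L^{\vec p}\to L^{\vec q}}$.

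The main technical hurdle is justifying the iterated Fatou step together with the a.e.\ pointwise convergence of $I_\lambda F_\varepsilon$. The cleanest path is to first establish everything under the extra assumption that $f\in L^\infty$ has compact support, so that $I_\lambda F_\varepsilon$ is locally integrable and the pointwise limit is unambiguous off the coordinate hyperplanes $\{x_i=0\}$ for $i>i_1$, and then to extend to a general nonnegative $f\in L^{\vec{\tilde p}}$ via monotone approximation $f_k\uparrow f$ and the monotone convergence theorem on both sides of (\ref{eq:e2}).
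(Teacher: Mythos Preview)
Your proof is correct. For the necessity direction it is essentially the paper's argument: both test $I_\lambda$ against a product of $f(y')$ with an approximate delta in $y''$ (the paper uses normalized indicators $\delta^{-n_i}\chi_{\{|y_i|\le\delta\}}$ and the mean value theorem for integrals, you use a smooth bump $\phi_\varepsilon$ and dominated convergence), then pass to the limit via Fatou. Your extra care in first treating bounded compactly supported $f$ and then extending by monotone approximation is a clean way to secure the pointwise convergence.

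For the sufficiency direction you take a genuinely different, more direct route. The paper dualizes: from \eqref{eq:e2} it obtains the adjoint bound $L^{\vec q\,'}\to L^{\vec{\tilde p}\,'}$, extends this to $L^{\vec q\,'}\to L^{\vec p\,'}$ by translating in $(y_{i_1+1},\dots,y_m)$ and using $p'_i=\infty$ for $i>i_1$, and then dualizes back. Your argument avoids duality entirely: Minkowski in the $y''$-integral (valid since all $q_i\ge 1$), translation invariance of $\|\cdot\|_{L^{\vec q}}$ in the $x''$-block, and then the observation that the outer $L^{p_i}$-norms for $i>i_1$ are all $L^1$ so that $\int_{y''}\|f(\cdot,y'')\|_{L^{\vec{\tilde p}}}\,dy''=\|f\|_{L^{\vec p}}$. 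This is shorter and exhibits the same constant $C_{\vec p,\vec q}$ directly; the paper's duality detour buys nothing extra here beyond being a template that also drives other lemmas in the section.
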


\begin{proof}
First, we assume that $I_{\lambda}$ is bounded.
Let
\[
  f(y_1,\ldots,y_m)
  = \tilde f(y_1,\ldots,y_{i_1}) \prod_{i=i_1+1}^m
  \frac{1}{\delta^{n_i}} \chi^{}_{\{|y_i|\le \delta\}}(y_i),
\]
where $\tilde f \in L^{\vec{\tilde p}}$ is non-negative.
We have
\[
    \Big \| \int_{\bbR^{N_m}}
     \frac{f(y_1,\ldots,y_m) dy_1\ldots dy_m}{(\sum_{i=1}^m
     |x_i-y_i|  )^{\lambda}}
     \Big\|_{L^{\vec q}}
     \le  \|I_{\lambda}\|_{L^{\vec p}\rightarrow L^{\vec q}} \|f\|_{L^{\vec p}}.
\]
By the mean value theorem, there exist some
$\xi_i\in\bbR^{n_i}$ with $|\xi_i|\le \delta$ for $i_1+1\le i\le m$
such that
\[
    \Big\| \int_{\bbR^{n_1+\ldots+n_{i_1}}}
     \frac{\tilde{f}(y_1,\ldots,y_{i_1}) dy_1\ldots dy_{i_1}}{(\sum_{i=1}^{i_1}
     |x_i-y_i| + \sum_{i={i_1+1}}^m |x_i-\xi_i| )^{\lambda}}
     \Big\|_{L^{\vec q}}
     \le   \|I_{\lambda}\|_{L^{\vec p}\rightarrow L^{\vec q}} \|\tilde f\|_{L^{\vec {\tilde p}}}.
\]
Letting $\delta\rightarrow 0$, we see from Fatou's lemma that
(\ref{eq:e2}) is true.

Next we assume that (\ref{eq:e2}) is true.
Then for any $f\in L^{\vec {\tilde p}}$ and $g\in L^{\vec q\ \!\!'}$,
\[
   \bigg| \int_{\bbR^{ N_{i_1}+N_m}}
    \hskip -0.1em
     \frac{ f(y_1,\ldots,y_{i_1}) g(x_1,\ldots,x_m)  dy_1\ldots dy_{i_1} dx_1\ldots dx_m}{(\sum_{i=1}^{i_1}
     |x_i-y_i| + \sum_{i={i_1+1}}^m |x_i|)^{\lambda}}\bigg|
     \le
     C_{\vec p, \vec q}\|f\|_{L^{\vec {\tilde p}}}\|g\|_{L^{\vec q\ \!\!'}},
\]
which is equivalent to
\[
    \Big \| \int_{\bbR^{N_m}}
     \frac{  g(x_1,\ldots,x_m)  dx_1\ldots dx_m}{(\sum_{i=1}^{i_1}
     |x_i-y_i| + \sum_{i={i_1+1}}^m |x_i|)^{\lambda}}
     \Big\|_{L^{\vec {\tilde p}\ \!\!'}}
     \le
     C_{\vec p, \vec q}\|g\|_{L^{\vec q\ \!\!'}}.
\]
It follows that for any $(y_{i_1+1},\ldots,y_m)
\in\bbR^{n_{i_1+1}}\times \ldots \times \bbR^{n_m}$,
\begin{align*}
 &   \Big \| \int_{\bbR^{N_m}}
     \frac{  g(x_1,\ldots,x_m)  dx_1\ldots dx_m}{(\sum_{i=1}^m
     |x_i-y_i|  )^{\lambda}}
     \Big \|_{L_{(y_1,\ldots,y_{i_1})}^{(p'_1,\ldots,p'_{i_1}) }}  \\
 &=    \Big \| \int_{\bbR^{N_m}}
     \frac{  g(x_1,\ldots,x_{i_1},x_{i_1+1}+y_{i_1+1},
     \ldots,x_m+y_m)  dx_1\ldots dx_m}{(\sum_{i=1}^{i_1}
     |x_i-y_i| + \sum_{i={i_1+1}}^m |x_i |)^{\lambda}}
      \Big \|_{L_{(y_1,\ldots,y_{i_1})}^{(p'_1,\ldots,p'_{i_1}) }} \\
&\le
     C_{\vec p, \vec q}
     \| g(x_1,\ldots,x_{i_1},x_{i_1+1}+y_{i_1+1},
     \ldots,x_m+y_m)\|_{L_x^{\vec q\ \!\!'}}\\
&= C_{\vec p, \vec q}
      \|g\|_{L^{\vec q\ \!\!'}}     .
\end{align*}
Since $p'_i=\infty$ for $i_1+1\le i\le m$, we can rewrite the above inequality as
\begin{align*}
 &    \Big \| \int_{\bbR^{N_m}}
     \frac{  g(x_1,\ldots,x_m)  dx_1\ldots dx_m}{(\sum_{i=1}^m
     |x_i-y_i|  )^{\lambda}}
      \Big \|_{L^{\vec p\ \!\!'}}
     \le
C_{\vec p, \vec q}  \|g\|_{L^{\vec q\ \!\!'}}.
\end{align*}
Hence
\begin{align*}
 &    \Big \| \int_{\bbR^{N_m}}
     \frac{ f(y_1,\ldots,y_m)  dy_1\ldots dy_m}{(\sum_{i=1}^m
     |x_i-y_i|  )^{\lambda}}
      \Big \|_{L^{\vec q}}
\le
C_{\vec p, \vec q}  \|f\|_{L^{\vec p}}.
\end{align*}
This completes the proof.
\end{proof}

Next we consider the case  $p_i=q_i$ for some $1\le i\le m$.

\begin{Lemma}\label{Lm:structure}
Let $\vec p = (p_1,\ldots,p_m)$
and $\vec q=(q_1,\ldots,q_m)$ with
$1\le p_i, q_i\le \infty$ 
satisfy  (\ref{eq:homo:m}), where $1\le i\le m$.
Suppose that $I_{\lambda}$ is bounded from
$L^{\vec p}$ to $L^{\vec q}$.
If $p_{i_0}=q_{i_0}$ for some $1\le i_0\le   m$,
then the mapping
\[
  f\mapsto \int_{\bbR^{N_m-n_{i_0}}}\!
  \frac{f(y_1,\ldots,y_{i_0-1},y_{i_0+1},\ldots,y_m)
  dy_1\ldots dy_{i_0-1}dy_{i_0+1}\ldots dy_m}
  {(\sum_{i\ne i_0} |x_i - y_i|)^{\lambda-n_{i_0}  }}
\]
is bounded from $L^{(p_1,\ldots,p_{i_0-1},p_{i_0+1},\ldots,p_m)}$
to $L^{(q_1,\ldots,q_{i_0-1},q_{i_0+1},\ldots,q_m)}$.
\end{Lemma}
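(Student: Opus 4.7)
Since the kernel of $I_\lambda$ is non-negative, it suffices to prove the bound for $f\ge 0$; then $|Jf|\le J|f|$ extends the estimate to general complex $f$. Write $\hat y = (y_1,\ldots,\widehat{y_{i_0}},\ldots,y_m)$, $\hat x = (x_1,\ldots,\widehat{x_{i_0}},\ldots,x_m)$, $\hat{\vec p} = (p_1,\ldots,\widehat{p_{i_0}},\ldots,p_m)$, $\hat{\vec q} = (q_1,\ldots,\widehat{q_{i_0}},\ldots,q_m)$, $A(\hat x,\hat y) = \sum_{i\ne i_0}|x_i-y_i|$, and denote the reduced operator by $Jf(\hat x) = \int f(\hat y)\,A(\hat x,\hat y)^{-(\lambda-n_{i_0})}\,d\hat y$. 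The plan is to feed the hypothesis a family of product-type test functions $F_R(y) = f(\hat y)\,\chi^{}_{B(0,R)}(y_{i_0})$ and let $R\to\infty$; the crucial feature is that because $p_{i_0}=q_{i_0}$, the volume factors $|B(0,R)|^{1/p_{i_0}}$ and $|B(0,R)|^{1/q_{i_0}}$ will match.

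Because $\chi^{}_{B(0,R)}(y_{i_0})$ depends only on $y_{i_0}$, it factors out at the $i_0$-th slot of the nested norm, yielding $\|F_R\|_{L^{\vec p}} = |B(0,R)|^{1/p_{i_0}}\,\|f\|_{L^{\hat{\vec p}}}$. By Fubini,
\[
I_\lambda F_R(\hat x,x_{i_0}) = \int f(\hat y)\,K_R(A(\hat x,\hat y),x_{i_0})\,d\hat y, \qquad K_R(A,x_{i_0}) = \int_{B(0,R)}\frac{dy_{i_0}}{(A+|x_{i_0}-y_{i_0}|)^{\lambda}}.
\]
Translating $z=y_{i_0}-x_{i_0}$ and noting that $B(0,R)\supset B(x_{i_0},R/2)$ whenever $|x_{i_0}|\le R/2$, one gets the $x_{i_0}$-independent lower bound
\[
K_R(A,x_{i_0}) \;\ge\; H(A,R) := \int_{B(0,R/2)}(A+|z|)^{-\lambda}\,dz, \qquad |x_{i_0}|\le R/2.
\]

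Since $H$ is independent of $x_{i_0}$, restricting the outer $L^{q_{i_0}}_{x_{i_0}}$ norm to $B(0,R/2)$ factors cleanly:
\[
\|I_\lambda F_R\|_{L^{\vec q}} \;\ge\; |B(0,R/2)|^{1/q_{i_0}}\,\Big\|\int f(\hat y)\,H(A(\hat x,\hat y),R)\,d\hat y\Big\|_{L^{\hat{\vec q}}_{\hat x}}.
\]
Combining this with the hypothesis $\|I_\lambda F_R\|_{L^{\vec q}}\le C\|F_R\|_{L^{\vec p}}$ and using $p_{i_0}=q_{i_0}$ to cancel the $R$-dependent volumes (up to the harmless constant $2^{n_{i_0}/p_{i_0}}$) produces
\[
\Big\|\int f(\hat y)\,H(A(\hat x,\hat y),R)\,d\hat y\Big\|_{L^{\hat{\vec q}}_{\hat x}} \;\lesssim\; \|f\|_{L^{\hat{\vec p}}}
\]
uniformly in $R$. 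Now let $R\to\infty$: monotone convergence on $\mathbb R^{n_{i_0}}$ gives $H(A,R)\uparrow \int_{\mathbb R^{n_{i_0}}}(A+|z|)^{-\lambda}dz = c_0\,A^{n_{i_0}-\lambda}$ (finiteness of this integral automatically forces $\lambda>n_{i_0}$, which is what one needs for the reduced kernel to be a Riesz-type kernel), whence $\int f(\hat y)H\,d\hat y\uparrow c_0\,Jf(\hat x)$ pointwise in $\hat x$. Iterating monotone convergence through the nested slots of the mixed norm $L^{\hat{\vec q}}$ then yields $\|Jf\|_{L^{\hat{\vec q}}}\lesssim \|f\|_{L^{\hat{\vec p}}}$, which is the desired conclusion.

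The main obstacle is that the convergence $K_R\to c_0\,A^{n_{i_0}-\lambda}$ is not uniform in $\hat y$ (it is slow when $A$ is small), so one cannot simply pull the limit inside. The resolution is the cutoff $|x_{i_0}|\le R/2$, which replaces $K_R$ by the $x_{i_0}$-free quantity $H(A,R)$; monotonicity of $H(A,\cdot)$ in $R$ then lets monotone convergence take over and avoids any uniformity question. As a sanity check, the boundary case $p_{i_0}=q_{i_0}=\infty$ (with the convention $|B|^{1/\infty}=1$) goes through identically: the cutoff is absorbed by the $L^{\infty}_{x_{i_0}}$ norm without creating a scaling factor, and the test function likewise has no scaling factor, so the cancellation still closes.
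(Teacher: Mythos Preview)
Your proof is correct and follows essentially the same strategy as the paper's: insert a characteristic function in the $i_0$-th slot of a product test function, use $p_{i_0}=q_{i_0}$ to cancel the resulting volume factors, and pass to the limit via monotone convergence. The paper executes this through the bilinear pairing (testing against $g_1\in L^{\hat{\vec q}\,'}$ with a matching cutoff in $x_{i_0}$ and truncating to $|\hat x-\hat y|\le a$), whereas you work directly with the mixed norm using the product factoring $\|\chi(x_{i_0})\,G(\hat x)\|_{L^{\vec q}}=\|\chi\|_{L^{q_{i_0}}}\|G\|_{L^{\hat{\vec q}}}$; both routes are valid, and yours avoids the intermediate truncated operator $L_a$.
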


\begin{proof}
Denote
\begin{align*}
\vec{\tilde p}&=(p_1,\ldots,p_{i_0-1},  p_{i_0+1},  \ldots,p_m), \\
\vec{\tilde q}&=(q_1,\ldots,q_{i_0-1},  q_{i_0+1},  \ldots,q_m),\\
\tilde x &=(x_1,\ldots,x_{i_0-1},  x_{i_0+1},  \ldots,x_m),\\
\tilde y &=(y_1,\ldots,y_{i_0-1},  y_{i_0+1},  \ldots,y_m).
\end{align*}
Fix some $f_1\in L^{\vec{\tilde p}}$.
For any  $a>0$ and $g_1\in L^{\vec {\tilde q}\ \!\!'}$, set
$f_2 =\chi^{}_{\{|y_{i_0}|\le 2a\}}$,
$g_2 =\chi^{}_{\{|x_{i_0}|\le 2a\}}$,
$f(y_1,\ldots,y_m) =   f_1(\tilde y) f_2(y_{i_0})  $
and $g(x_1,\ldots,x_m) =   g_1(\tilde x) g_2(x_{i_0}) $.
Then $f\in L^{\vec p}$ and $g\in L^{\vec q\ \!\!'}$.
We have
\begin{align}
  \int_{\bbR^{ N_m}}
I_{\lambda}|f|(x_1,\ldots,x_m)|g(x_1,\ldots,x_m)|
dx_1 \ldots dx_m
& \le \|I_{\lambda}\|\cdot
 \|f\|_{L^{\vec p}} \|g\|_{L^{\vec q\ \!\!'}} \nonumber  \\
    & \hskip -10mm\approx a^{n_{i_0}}\|I_{\lambda}\|\cdot
\|f_1\|_{L^{\vec {\tilde p}}}\|g_1\|_{L^{\vec {\tilde q}\ \!\!'}}.
  \label{eq:e3}
\end{align}
On the other hand,
\begin{align}
&
\int_{\bbR^{ N_m}}
I_{\lambda}|f|(x_1,\ldots,x_m)|g(x_1,\ldots,x_m)|
dx_1\ldots dx_m  \nonumber  \\
 &=\int_{\bbR^{2N_m}}
   \frac{|f(y_1,\ldots,y_m)g(x_1,\ldots,x_m)|
     dx_1dy_1\ldots dx_mdy_m}
      {(\sum_{i=1}^m |x_i-y_i|)^{\lambda}} \nonumber \\
&\ge
\int_{|\tilde x-\tilde y|\le a }
  |f_1(\tilde y)g_1(\tilde x)|
     d\tilde x d\tilde y
        \int_{|x_{i_0}-y_{i_0}|\le |\tilde x- \tilde y|}
 \frac{|f_2(y_{i_0})g_2(x_{i_0})| dx_{i_0} dy_{i_0}}
     {(\sum_{i=1}^m |x_i-y_i|)^{\lambda}}
 \nonumber \\
&\gtrsim  \int_{|\tilde x - \tilde y|\le  a }
  \frac{|f_1(\tilde y)g_1(\tilde x)|
     d\tilde x d\tilde y}
     {|\tilde x-\tilde y|^{\lambda}}\cdot
     |\tilde x - \tilde y|^{n_{i_0}}
  \Big(2a- |\tilde x-\tilde y|\Big)^{n_{i_0}}
  \nonumber
   \\
&\gtrsim  a^{n_{i_0}}  \int_{|\tilde x - \tilde y|\le a }
  \frac{|f_1(\tilde y)g_1(\tilde x)|
     d\tilde x d\tilde y}
     {|\tilde x-\tilde y|^{\lambda-n_{i_0}}}.\label{eq:e4}
\end{align}
Denote
\[
  L_a f_1(\tilde x)
  = \int_{|\tilde x - \tilde y|\le a }
  \frac{|f_1(\tilde y) |
       d\tilde y}
     {|\tilde x-\tilde y|^{\lambda-n_{i_0}}}.
\]
We see from (\ref{eq:e3}) and (\ref{eq:e4}) that for any
$g_1\in L^{\vec {\tilde q}\ \!\!'}$,
$
  |\langle L_a f_1, g_1\rangle |
  \lesssim  \|f_1\|_{L^{\vec{\tilde p}}}\|g_1\|_{L^{\vec {\tilde q}\ \!\!'}}.
$
Hence
\[
    \| L_a f_1\|_{L^{\vec {\tilde q}}} \lesssim  \|f_1\|_{L^{\vec{\tilde p}}},\qquad a>0.
\]
Letting $a\rightarrow \infty$, we see from the monotone convergence theorem that
\[
  \left\| \int_{\bbR^{N_m-n_{i_0}}} \frac{|f_1(\tilde y)| d\tilde y}
  {|\tilde x-\tilde y|^{\lambda-n_{i_0}}}
   \right\|_{L^{\vec {\tilde q}}} \lesssim
   \|f_1\|_{L^{\vec {\tilde p}}}.
\]
This completes the proof.
\end{proof}

The following lemma focuses on the case $(p_i,q_i)=(1,\infty)$
for some $1\le i\le m$.

\begin{Lemma}\label{Lm:1:infty}
Suppose that $\vec p=(p_1,\ldots,p_m)$
and $\vec q = (q_1,\ldots,q_m)$
with $1\le p_i, q_i\le \infty$ for all $1\le i\le m$ and $(p_{i_0},q_{i_0})
=(1,\infty)$ for some $1\le i_0\le m$.
Then
$I_{\lambda}$ is bounded from $L^{\vec p}$ to $L^{\vec q}$
if and only if
\begin{equation}\label{eq:ea:a20}
 T_{\lambda}:\,  f\mapsto \int_{\bbR^{N_m-n_{i_0}}}\!
  \frac{f(y_1,\ldots,y_{i_0-1},y_{i_0+1},\ldots,y_m)
  dy_1\ldots dy_{i_0-1}dy_{i_0+1}\ldots dy_m}
  {(\sum_{i\ne i_0} |x_i - y_i|)^{\lambda }}
\end{equation}
is bounded from $L^{(p_1,\ldots,p_{i_0-1},p_{i_0+1},\ldots,p_m)}$
to $L^{(q_1,\ldots,q_{i_0-1},q_{i_0+1},\ldots,q_m)}$.
\end{Lemma}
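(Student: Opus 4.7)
The claim splits into two implications, both exploiting the pairing $(p_{i_0},q_{i_0})=(1,\infty)$ but through different mechanisms.

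For sufficiency (assume $T_\lambda$ is bounded), the elementary bound $\sum_{i=1}^m |x_i-y_i| \ge \sum_{i\ne i_0} |x_i-y_i|$ together with Fubini gives the pointwise estimate
\[
|I_\lambda F(\tilde x, x_{i_0})| \le \int_{\bbR^{N_m - n_{i_0}}} \frac{\tilde F(\tilde y)}{(\sum_{i\ne i_0} |x_i-y_i|)^\lambda}\, d\tilde y = T_\lambda \tilde F(\tilde x),
\]
where $\tilde F(\tilde y):= \|F(\tilde y, \cdot)\|_{L^1_{y_{i_0}}}$. The right-hand side is independent of $x_{i_0}$, so the $L^\infty_{x_{i_0}}$ norm at the $i_0$-th slot of the iterated mixed norm collapses, yielding $\|I_\lambda F\|_{L^{\vec q}} \le \|T_\lambda \tilde F\|_{L^{\vec{\tilde q}}}$. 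Applying the hypothesis on $T_\lambda$ together with Minkowski's integral inequality to estimate $\|\tilde F\|_{L^{\vec{\tilde p}}}$ by $\|F\|_{L^{\vec p}}$ (using $p_{i_0}=1$) closes the argument.

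For necessity (assume $I_\lambda$ is bounded), I would test on the tensor product $F(\tilde y, y_{i_0}) = f_1(\tilde y)\phi_\delta(y_{i_0})$, where $f_1 \ge 0$ and $\phi_\delta(y_{i_0}) = \delta^{-n_{i_0}}\chi^{}_{\{|y_{i_0}|\le \delta\}}$, so that $\|F\|_{L^{\vec p}} \approx \|f_1\|_{L^{\vec{\tilde p}}}$ uniformly in $\delta$ (here $p_{i_0}=1$ is crucial). For $|x_{i_0}|, |y_{i_0}| \le \delta$ the triangle inequality gives $|x_{i_0}-y_{i_0}| \le 2\delta$, whence
\[
I_\lambda F(\tilde x, x_{i_0}) \gtrsim \int_{\bbR^{N_m - n_{i_0}}} \frac{f_1(\tilde y)}{(\sum_{i\ne i_0} |x_i-y_i| + 2\delta)^\lambda}\, d\tilde y =: T_\lambda^{(\delta)} f_1(\tilde x),
\]
uniformly for all $|x_{i_0}|\le \delta$. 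The crucial feature of this lower bound is that it is independent of $x_{i_0}$ and valid on a set of positive $x_{i_0}$-measure, so the essential supremum at the middle $L^\infty_{x_{i_0}}$ slot preserves it, giving $\|I_\lambda F\|_{L^{\vec q}} \gtrsim \|T_\lambda^{(\delta)} f_1\|_{L^{\vec{\tilde q}}}$.

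Combining with $\|I_\lambda F\|_{L^{\vec q}} \le \|I_\lambda\|\,\|F\|_{L^{\vec p}}$ and letting $\delta\to 0^+$, the pointwise monotone convergence $T_\lambda^{(\delta)} f_1 \nearrow T_\lambda f_1$ together with the iterated monotone convergence theorem for mixed-norm Lebesgue spaces delivers $\|T_\lambda f_1\|_{L^{\vec{\tilde q}}} \lesssim \|I_\lambda\|\,\|f_1\|_{L^{\vec{\tilde p}}}$ for non-negative $f_1$, and thus for arbitrary $f_1$ via $|T_\lambda f_1|\le T_\lambda|f_1|$. The main obstacle is transporting the pointwise lower bound through a middle-slot $L^\infty_{x_{i_0}}$ norm; centring $\phi_\delta$ at the origin and absorbing the $x_{i_0}$–$y_{i_0}$ gap into a uniform $2\delta$ slack is exactly the device that makes the essential supremum preserve the reduced integral $T_\lambda^{(\delta)} f_1$.
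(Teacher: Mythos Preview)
Your proof is correct. The sufficiency argument is essentially the paper's: the paper moves the $L^{\infty}_{x_{i_0}}$ norm to the innermost slot via a permutation inequality and then applies Young's inequality with exponents $(1,\infty)$, but that is exactly your pointwise bound $|I_\lambda F(\tilde x,x_{i_0})|\le T_\lambda\tilde F(\tilde x)$ followed by Minkowski.

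The necessity argument, however, takes a genuinely different route. The paper also starts with the test function $f_1(\tilde y)\phi_\delta(y_{i_0})$, but after sending $\delta\to 0$ (via the mean value theorem and Fatou) it only reaches the intermediate estimate
\[
\Bigl\|\int_{\bbR^{N_m-n_{i_0}}} \frac{|\tilde f(\tilde y)|\,d\tilde y}{(\sum_{i\ne i_0}|x_i-y_i| + |x_{i_0}|)^\lambda}\Bigr\|_{L^{\vec q}_x} \lesssim \|\tilde f\|_{L^{\vec{\tilde p}}},
\]
which still carries the extra variable $x_{i_0}$. To remove it, the paper dualizes, applies a second $\phi_\delta$-trick in the $x_{i_0}$ variable (exploiting $q'_{i_0}=1$), and dualizes back. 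You sidestep this two-step duality: by restricting to $|x_{i_0}|\le\delta$ \emph{before} passing to the limit, the lower bound $T_\lambda^{(\delta)}f_1(\tilde x)$ is already independent of $x_{i_0}$, and since the $i_0$-th slot of $L^{\vec q}$ is $L^\infty$, the essential supremum preserves it; iterated monotone convergence then finishes directly. Your route is more elementary and self-contained; the paper's is more systematic in that it reuses the same $\phi_\delta$/duality mechanism on both sides.
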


\begin{proof}
Let $\vec{\tilde p} $, $\vec{\tilde q} $, $\tilde x$ and $\tilde y$ be defined as in the proof of Lemma~\ref{Lm:structure}.

\textit{Sufficiency}.
Assume that $T_{\lambda}$ is bounded from
$L^{\vec{\tilde p}}$
to
$L^{\vec{\tilde q}}$.

For any $f\in L^{\vec p}$, since $q_{i_0}=\infty$, we have
\begin{align}
\|I_{\lambda}f\|_{L_{(x_1,\ldots,x_m)}^{(q_1,\ldots,q_m)}}
& \le
\|I_{\lambda}f\|_{L_{(x_{i_0},x_1,\ldots,x_{i_0-1},x_{i_0+1},
\ldots,x_m)}^{(q_{i_0},q_1,\ldots,q_{i_0-1},q_{i_0+1},
\ldots,q_m)}}.
 \label{eq:ea:a15}
\end{align}
By Young's inequality, we get
\begin{align*}
\|I_{\lambda}f\|_{L_{x_{i_0}}^{q_{i_0}}}
&\lesssim
   \int_{\bbR^{N_m-n_{i_0}}}
  \frac{\|f(y_1,\ldots,y_m)\|_{L_{y_{i_0}}^{p_{i_0}}}
   dy_1\ldots dy_{i_0-1}dy_{i_0+1}\ldots dy_m}
   {(\sum_{\substack{1\le i\le m,
      i\ne i_0}} |x_i-y_i|)^{\lambda}}.
\end{align*}
Applying the boundedness of  $T_{\lambda}$  from
$L^{\vec {\tilde p}}$
to
$L^{\vec {\tilde q}}$,
we have
\begin{align*}
\Big\|\|I_{\lambda}f\|_{L_{x_{i_0}}^{q_{i_0}}}
\Big\|_{L_{\tilde x}^{\vec{\tilde q}}}
 \lesssim
\Big\| \|f(y_1,\ldots,y_m)\|_{L_{y_{i_0}}^{p_{i_0}}}
\Big\|_{L_{\tilde y}^{\vec{\tilde p}}} .
\end{align*}
It follows from (\ref{eq:ea:a15}) that
\[
  \|I_{\lambda}f\|_{L^{\vec q}}
  \lesssim
\Big\| \|f(y_1,\ldots,y_m)\|_{L_{y_{i_0}}^{p_{i_0}}}
\Big\|_{L_{\tilde y}^{\vec{\tilde p}}} .
\]
Since $p_{i_0}=1\le p_i$ for $1\le i\le i_0-1$,
applying Minkowski's inequality, we get
$\|I_{\lambda f}\|_{L^{\vec q}}\lesssim \|f\|_{L^{\vec p}}$.

\textit{Necessity}.
Assume that $I_{\lambda}$ is bounded from $L^{\vec p}$ to $L^{\vec q}$.
For any $\tilde f\in L^{\vec{\tilde p}}$ and $\delta>0$,
set
\[
  f_{\delta}(y_1,\ldots,y_m) =  \frac{1}{\delta^{n_{i_0}}}\chi^{}_{\{|y_{i_0}|\le \delta\}} (y_{i_0})\tilde f(y_1,\ldots,y_{i_0-1},y_{i_0+1},
\ldots,y_m).
\]
By the assumption,
\begin{align*}
\left\| \int_{\bbR^{N_m}}
   \frac{|f_{\delta}(y_1,\ldots,y_m) |dy_1\ldots dy_m}{(\sum_{i=1}^m |x_i-y_i|)^{\lambda}}
 \right\|_{L_{(x_1,\ldots,x_m)}^{(q_1,\ldots,q_m)}}
\lesssim \|f_{\delta}\|_{L^{\vec p}}.
\end{align*}
By the mean-value theorem, there is some
$\xi_{\delta}\in\bbR^{n_{i_0}}$ with $|\xi_{\delta}|\le \delta$ such that
\begin{align*}
\bigg\|\int_{\bbR^{N_m-n_{i_0}}} \hskip -12.5pt
   \frac{|\tilde f(y_1,\ldots,y_{i_0-1},y_{i_0+1},
\ldots,y_m)|
 dy_1\ldots dy_{i_0-1}dy_{i_0+1}\ldots dy_m}
  {(\sum_{\substack{1\le i\le m \\ i\ne i_0}} |x_i-y_i| + |x_{i_0}-\xi_{\delta}|)^{\lambda}}
 \bigg\|_{L_{x}^{\vec q}}
\lesssim \|f_{\delta}\|_{L^{\vec p}}.
\end{align*}
Letting $\delta\rightarrow 0$, we see from Fatou's lemma
that for any $\tilde f\in L^{\vec {\tilde p}}$,
\begin{align*}
\bigg\| \int_{\bbR^{N_m-n_{i_0}}} \hskip -12.5pt
   \frac{|\tilde f(y_1,\ldots,y_{i_0-1}, y_{i_0+1},\ldots,y_m)|
   dy_1\ldots dy_{i_0-1}dy_{i_0+1} \ldots dy_m}
   {(\sum_{\substack{1\le i\le m \\
        i\ne i_0}} |x_i-y_i| + |x_{i_0}|)^{\lambda}}
\bigg\|_{L_{x}^{\vec q}}
\lesssim \|\tilde f\|_{L^{\vec {\tilde p}}}.
\end{align*}
By the duality, we get
\begin{align*}
\bigg\| \int_{\bbR^{N_m}}
   \frac{  g(x_1,\ldots,x_m) dx_1 \ldots dx_m}
   {(\sum_{\substack{1\le i\le m \\
        i\ne i_0}} |x_i-y_i| + |x_{i_0}|)^{\lambda}}
 \bigg\|_{L_{\tilde y
  }^{\vec{\tilde p}\ \!\!'}}
\lesssim \|g\|_{L^{\vec q\ \!\!'}},\,\, \forall g
\in L^{\vec q\ \!\!'}.
\end{align*}
Since $q'_{i_0}=1$, by setting
\[
  g_{\delta}(x_1,\ldots,x_m) =  \frac{1}{\delta^{n_{i_0}}}\chi^{}_{\{|x_{i_0}|\le \delta\}} (x_{i_0})\tilde g(x_1,\ldots,x_{i_0-1},x_{i_0+1},
\ldots,x_m)
\]
with $\tilde g
\in L^{\vec {\tilde q}\ \!\!'}$
and letting $\delta\rightarrow 0$, we get
\begin{align*}
&\bigg\| \int_{\bbR^{N_m-n_m}}\hskip -4mm
   \frac{ \tilde g(x_1,\ldots,x_{i_0-1},x_{i_0+1},
\ldots,x_m)  dx_1 \ldots dx_{i_0-1}dx_{i_0+1}
\ldots dx_m}
   {(\sum_{\substack{1\le i\le m \\
        i\ne i_0}} |x_i-y_i|  )^{\lambda}}
 \bigg\|_{L_{\tilde y
  }^{\vec{\tilde p}\ \!\!'}}
\lesssim \|\tilde g\|_{L^{\vec {\tilde q}\ \!\!'}}.
\end{align*}
Hence $T_{\lambda}$ is bounded from  $L^{\vec {\tilde q}\ \!\!'}$
to $L^{\vec {\tilde p}\ \!\!'}$.
Applying the duality again, we get that
$T_{\lambda}$ is bounded from  $L^{\vec {\tilde p}}$
to $L^{\vec {\tilde q} }$.
\end{proof}

The following lemma is used in the proof of the necessity.
\begin{Lemma}\label{Lm:pi}
Let $\vec p$, $\vec q$ and $\lambda$
be such that
(\ref{eq:homo:m}) is true, $1\le p_i\le q_i\le \infty$ for
all $1\le i\le m$ and there is some $i_0$ such that
$1<p_{i_0}<\infty$.
Suppose that $q_m<\infty$ and there is some $1< v\le m$ such that
$p_{v-1}>1$ and $p_i=1$ for all $v\le i\le m$.
If $I_{\lambda}$ is bounded  from $L^{\vec p}$
to $L^{\vec q}$, then we have
\[
  q_m \ge p_{v-1}.
\]
\end{Lemma}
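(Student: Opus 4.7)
My approach is a test-function argument that converts the boundedness hypothesis into a quantitative $N$-scaling inequality and extracts the desired relation $q_m \ge p_{v-1}$ in the limit $N\to\infty$.

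First I would apply Lemma~\ref{Lm:end points} with $i_1 = v-1$, which is permissible since the hypothesis forces $p_{v-1+1}=\ldots=p_m=1$. This converts the boundedness of $I_\lambda$ from $L^{\vec p}$ to $L^{\vec q}$ into the equivalent testing inequality
\[
\Big\|\int_{\bbR^{N_{v-1}}}\!\frac{f(y_1,\ldots,y_{v-1})\,dy}{\bigl(\sum_{i<v}|x_i-y_i|+\sum_{i\ge v}|x_i|\bigr)^{\lambda}}\Big\|_{L^{\vec q}}\lesssim\|f\|_{L^{\vec{\tilde p}}},\qquad \vec{\tilde p}=(p_1,\ldots,p_{v-1}).
\]
I denote the left operator by $S$ and work exclusively with it.

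Next I would choose a non-negative compactly supported $f_0\in L^{\vec{\tilde p}}$ and form the $N$-fold translated test function
\[
f_{N,z}(y)=\sum_{j=1}^{N} f_0\bigl(y_1,\ldots,y_{v-2},\,y_{v-1}-jz\bigr),\qquad z\in\bbR^{n_{v-1}},\ |z|\to\infty.
\]
Iterating Lemma~\ref{Lm: translation} in the $(v-1)$-th block (all higher blocks being unaffected) yields $\|f_{N,z}\|_{L^{\vec{\tilde p}}}\to N^{1/p_{v-1}}\|f_0\|_{L^{\vec{\tilde p}}}$. Since $S$ commutes with translations in $y_{v-1}$ (which become translations of the output in $x_{v-1}$), we have $Sf_{N,z}(x)=\sum_{j=1}^{N}(Sf_0)(x_1,\ldots,x_{v-2},x_{v-1}-jz,x_v,\ldots,x_m)$.

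The heart of the proof is a lower bound on $\|Sf_{N,z}\|_{L^{\vec q}}$ that exhibits the $N^{1/q_m}$ scaling. The idea is to exploit the far-field behaviour in the $x_m$ variable: whenever $|x_m|$ dominates the other contributions to the kernel (concretely, $|x_m|\gtrsim N|z|$ with $|x_{<v-1}|,|x_v|,\ldots,|x_{m-1}|\lesssim 1$), every one of the $N$ summands contributes with kernel value $\sim|x_m|^{-\lambda}$ irrespective of $j$, giving $Sf_{N,z}(x)\gtrsim N\,c_0\,|x_m|^{-\lambda}$. One then restricts the mixed norm to a region of the form $\{|x_{<v-1}|\le1\}\times E_{v-1}\times\{|x_v|,\ldots,|x_{m-1}|\le1\}\times\{|x_m|\in[\Lambda,2\Lambda]\}$ with $\Lambda\asymp N|z|$, computes the factor contributed by each slot (the $x_{v-1}$-slot via the disjoint-support structure of the translates, the $x_m$-slot via the $|x_m|^{-\lambda}$ tail), and uses the homogeneity relation~(\ref{eq:homo:m}) to cancel all $|z|$ and dimensional factors, isolating an inequality $N^{1/q_m}\lesssim N^{1/p_{v-1}}$ valid for every $N$. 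Letting $N\to\infty$ forces $q_m\ge p_{v-1}$.

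The main obstacle is Step~4: the naïve version of this translation argument only exhibits the spacing-in-$x_{v-1}$ of the summands and therefore yields the (known) weaker condition $q_{v-1}\ge p_{v-1}$. Extracting the stronger $q_m\ge p_{v-1}$ requires choosing the test region so that the $L^{q_m}_{x_m}$ integration of the common tail $|x_m|^{-\lambda}$, not the $L^{q_{v-1}}_{x_{v-1}}$ integration over the comb of peaks, carries the $N$-scaling. This is where the hypothesis $q_m<\infty$ enters decisively (it makes the $L^{q_m}_{x_m}$ norm sensitive to the amplitude of the tail), and where the hypothesis $p_{v-1}>1$ is used (it guarantees that the homogeneity identity leaves a non-trivial exponent in $N$ after cancellation). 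If a direct construction proves elusive, an alternative is to pass to the dual inequality $\|S^*g\|_{L^{\vec{\tilde p}'}}\lesssim\|g\|_{L^{\vec q'}}$, translate $g$ by $N$ copies in $x_m$ (accepting that $S^*$ is not translation-invariant there), and analyse the resulting non-local averages using the asymptotic $S^*g(y)\sim|y|_1^{-\lambda}\|g\|_{L^1}$ for $|y|_{<v}$ large.
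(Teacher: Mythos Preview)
Your reduction via Lemma~\ref{Lm:end points} is correct, and a translation-based test is a natural first thought. But Step~4 is a genuine gap, and your own discussion already hints at why. Translating the input in the $(v-1)$-th slot produces translates of the output in the \emph{same} slot, so the output $L^{\vec q}$ norm of the comb scales as $N^{1/q_{v-1}}$; this is exactly the ``known weaker condition'' you mention. Your proposed repair---restricting the $L^{\vec q}$ norm to a far-field shell $|x_m|\in[\Lambda,2\Lambda]$ with $\Lambda\asymp N|z|$---does not recover the missing information. With $|z|$ fixed and all other $|x_i|\lesssim 1$, one gets $Sf_{N,z}\gtrsim N\cdot N^{-\lambda}$ on the shell and hence a lower bound of order $N^{\,1-\lambda+n_m/q_m}$ for the restricted $L^{\vec q}$ norm. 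Plugging in the homogeneity identity $\lambda=\sum_{i<v} n_i/p'_i+\sum_i n_i/q_i$ shows that the resulting inequality $1-\lambda+n_m/q_m\le 1/p_{v-1}$ is \emph{always} satisfied (it reduces to $1/p'_{v-1}\le \sum_{i<v} n_i/p'_i+\sum_{i\ne m} n_i/q_i$), so no constraint relating $q_m$ and $p_{v-1}$ survives. The same vacuity occurs for every power-law choice of test region I can see; the condition $q_m\ge p_{v-1}$ is scale-invariant in a way that a comb of translates of a fixed bump cannot detect. Separately, your scheme presupposes $p_{v-1}<\infty$ (both to invoke Lemma~\ref{Lm: translation} and to have $N^{1/p_{v-1}}$ grow), but the hypotheses only give $p_{v-1}>1$; this case must be handled.

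The paper's argument is quite different and worth studying. It builds a single explicit test function with a logarithmic correction,
\[
f(y)=\frac{\chi_{\{|y_{v-1}|<1/2\}}}{\bigl(\sum_{i<v}|y_i|\bigr)^{\sum_{i<v} n_i/p_i}\,(\log 1/|y_{v-1}|)^{(1+\varepsilon)/p_{v-1}}},
\]
which lies barely in $L^{\vec{\tilde p}}$. A direct lower bound on $Sf$ near the origin shows that $\|Sf\|_{L^{\vec q}}=\infty$ whenever $(1+\varepsilon)q_m/p_{v-1}<1$, forcing $q_m\ge p_{v-1}$ once $p_{v-1}<\infty$. The finiteness of $p_{v-1}$ is then established by a separate step: first one uses Lemma~\ref{Lm:structure} and the case already proved to obtain $q_m>1$, and then a dual log-corrected test function $g\in L^{\vec q\,'}$ shows that $p_{v-1}=\infty$ would make the dual estimate blow up. The logarithmic factor is exactly what lets the argument probe the borderline relation $q_m=p_{v-1}$, which power-type or translated test functions cannot do.
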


\begin{proof}
We prove the conclusion in two steps.

(S1)\,\, We show that $q_m \ge p_{v-1}$ when $p_{v-1}<\infty$.

By Lemma~\ref{Lm:end points}, for any $f\in
L^{(p_1,\ldots,p_{v-1})}$, we have
\[
  \bigg\| \int_{\bbR^{n_1+\ldots+n_{v-1}}}
     \frac{f(y_1,\ldots,y_{v-1}) dy_1\ldots dy_{v-1}}{(\sum_{i=1}^{v-1}
     |x_i-y_i| + \sum_{i=v}^m |x_i|)^{\lambda}}
     \bigg\|_{L^{\vec q}}
     \lesssim \|f\|_{L^{(p_1,\ldots,p_{v-1})}}.
\]
Let
\[
  f(y_1,\ldots,y_{v-1}) = \frac{\chi^{}_{\{|y_{v-1}|< 1/2\}}(y_{v-1})}
  {(\sum_{i=1}^{v-1} |y_i|)^{n_1/p_1+\ldots+n_{v-1}/p_{v-1}}
  (\log 1/|y_{v-1}|)^{(1+\varepsilon)/p_{v-1}}},
\]
where $\varepsilon>0$ is a constant to be determined later.
It is easy to check that $f\in L^{(p_1,\ldots,p_{v-1})}$.
When $|x_i|\le 1/2$, $1\le i\le m$,
\begin{align}
Tf(x_1,\ldots,x_m)&:=\int_{\bbR^{n_1+\ldots+n_{v-1}}}
     \frac{f(y_1,\ldots,y_{v-1}) dy_1\ldots dy_{v-1}}{(\sum_{i=1}^{v-1}
     |x_i-y_i| +  \sum_{i=v}^m |x_i|)^{\lambda}} \nonumber \\
&\ge
\int_{\substack{ |x_i|^2\le |y_i|\le |x_i|\\ 1\le i\le v-1}}
    \frac{f(y_1,\ldots,y_{v-1}) dy_1\ldots dy_{v-1}}{(\sum_{i=1}^{v-1}
     |x_i-y_i| +  \sum_{i=v}^m |x_i|)^{\lambda}} \nonumber \\
&\gtrsim \frac{|x_1|^{n_1}\cdots |x_{v-1}|^{n_{v-1}}}{(\sum_{i=1}^m|x_i|)^{\lambda+n_1/p_1+\ldots
+n_{v-1}/p_{v-1}}(\log 1/|x_{v-1}|)^{(1+\varepsilon)/p_{v-1}}}.
  \label{eq:ea5a}
\end{align}
Hence
\begin{align*}
\|Tf\|_{L^{\vec q}}
&\gtrsim
\|Tf\cdot \chi^{}_{\{ |x_m|^2< |x_i|< |x_m|<1/2,
1\le i\le m-1\}}\|_{L^{\vec q}} \\
& \gtrsim
   \Big \|\frac{\chi^{}_{\{|x_m|<1/2\}}(x_m)}
   {|x_m|^{n_m/q_m}(\log 1/|x_m|)^{(1+\varepsilon)/p_{v-1}}} \Big\|_{L_{x_m}^{q_m}}.
 \end{align*}
If $q_m<p_{v-1}$, we can choose some $\varepsilon>0$ such that $(1+\varepsilon)q_m/p_{v-1}<1$. Consequently,
$\|Tf\|_{L^{\vec q}}=\infty$, which is a contradiction.
Hence $q_m\ge  p_{v-1}$.

(S2)\,\, We show that $p_{v-1}<\infty$.

We claim  that $q_m>1$.
Recall that $1<p_{i_0}<\infty$.
Let $i_1 = \max\{i:\, 1<p_i<\infty\}$.
Then $i_0\le i_1 \le m-1$ and for any $i_1<i\le m$,
either $p_i=1$ or $p_i=\infty$.
Since $q_i\ge p_i$, we have $q_i=p_i$ when $p_i=\infty$.
Denote
$\{1,\ldots,i_1\}\cup \{i:\, i_1\le i\le m \mbox{ and } p_i<\infty\}$
by $\{j_l:\, 1\le l \le r\}$, where $j_1<\ldots<j_r$.
Then $p_{j_l} = p_l$ for all $1\le l\le i_1$.
By Lemma~\ref{Lm:structure},
 the mapping
\[
   f\mapsto \int_{\bbR^{n_{j_1}+\dots+n_{j_r}}}
     \frac{ f(y_{j_1},\ldots,y_{j_r})
        dy_{j_1}\ldots dy_{j_r}}
        {(\sum_{l=1}^r |x_{j_l}-y_{j_l}|)^{
         \sum_{l=1}^r (n_{j_l}/p'_{j_l} + n_{j_l}/q_{j_l})}
        }
\]
is bounded from $L^{(p_{j_1},\ldots,p_{j_r})}$
to $L^{(q_{j_1},\ldots,q_{j_r})}$.
Since $1<p_{j_{i_1}}<\infty$ and $p_{j_l}=1$ for all $i_1+1\le l\le r$, we see from (S1) that $q_m = q_{j_r}\ge p_{j_{i_1}} >1$. Hence $1<q_m<\infty$.

By Lemma~\ref{Lm:end points}, for any $f\in L^{(p_1,\ldots,p_{v-1})}$,
\begin{align*}
 \bigg \|\int_{\bbR^{n_1+\ldots+n_{v-1}}}
     \frac{f(y_1,\ldots,y_{v-1}) dy_1\ldots dy_{v-1}}{(\sum_{i=1}^{v-1}
     |x_i-y_i| + \sum_{i={v}}^m |x_i|)^{\lambda}}
     \bigg\|_{L^{\vec q}}
     \lesssim \|f\|_{L^{(p_1,\ldots,p_{v-1})}}.
\end{align*}
It follows from the duality that
for any $g\in L^{\vec q\ \!\!'}$,
\begin{align}
 \bigg\|\int_{\bbR^{N_m}}
     \frac{g(x_1,\ldots,x_m) dx_1\ldots dx_m}{(\sum_{i=1}^{v-1}
     |x_i-y_i| + \sum_{i={v}}^m |x_i|)^{\lambda}}
     \bigg\|_{L_{(y_1,\ldots,y_{v-1})}^{(p'_1,\ldots,p'_{v-1})}}
     \lesssim \|g\|_{L^{\vec q\ \!\!'}}. \label{eq:s:e2a}
\end{align}
Let
\[
  g(x_1,\ldots,x_m) = \frac{\chi^{}_{\{|x_m|< 1/2\}}(x_m)}
  {(\sum_{i=1}^m |x_i|)^{n_1/q'_1+\ldots+n_m/q'_m}
  (\log 1/|x_m|)^{(1+\varepsilon)/q'_m}},
\]
where $\varepsilon>0$ is a constant.
It is easy to check that  $g\in L^{\vec q\ \!\!'}$.

For any $y:=(y_1,\ldots,y_{v-1})\in E:= \{(y_1,\ldots,y_{v-1}):\, |y_{v-1}|^2\le
|y_i|\le |y_{v-1}|, 1\le i\le v-2
\mbox{ and } |y_{v-1}|<1/2
\} $, we have
\begin{align*}
I_g(y)
&:=
\int_{\bbR^{N_m}}
     \frac{g(x_1,\ldots,x_m) dx_1\ldots dx_m}{(\sum_{i=1}^{v-1}
     |x_i-y_i| + \sum_{i={v}}^m |x_i|)^{\lambda}} \\
&\ge
\int_{\substack{|y_i|^2 < |x_i|<|y_i|, 1\le i\le v-1 \\
    |y_{v-1}|^2<|x_i|<|y_{v-1}|, v\le i\le m}}
     \frac{g(x_1,\ldots,x_m) dx_1\ldots dx_m}{(\sum_{i=1}^{v-1}
     |x_i-y_i| + \sum_{i={v}}^m |x_i|)^{\lambda}} \\
&\gtrsim
   \frac{ |y_{v-1}|^{n_{v}+\ldots+n_m} \prod_{i=1}^{v-1} |y_i|^{n_i} }
   {(\sum_{i=1}^{v-1} |y_i|)^{\lambda + \sum_{i=1}^m n_i/q'_i}
     (\log 1/|y_{v-1}|)^{(1+\varepsilon)/q'_m} }.
\end{align*}
Hence for $(y_2,\ldots,y_{v-1})\in \{(y_2,\ldots,y_{v-1}):\, |y_{v-1}|^2\le
|y_i|\le |y_{v-1}|, 2\le i\le v-2$
  and $ |y_{v-1}|<1/2
\} $,
\begin{align*}
 \| I_g(y) \|_{L_{y_1}^{p'_1}}
 &\gtrsim
 \bigg\|
   \frac{ |y_{v-1}|^{n_{v}+\ldots+n_m} \prod_{i=1}^{v-1} |y_i|^{n_i}
    \chi^{}_{\{ |y_{v-1}|^2 < |y_1| < |y_{v-1}| \}}(y_1)}
   {(\sum_{i=1}^{v-1} |y_i|)^{\lambda + \sum_{i=1}^m n_i/q'_i}
     (\log 1/|y_{v-1}|)^{(1+\varepsilon)/q'_m} }
 \bigg\|_{L_{y_1}^{p'_1}} \\
 &\gtrsim
   \frac{ |y_{v-1}|^{n_{v}+\ldots+n_m+n_1+n_1/p'_1} \prod_{i=2}^{v-1} |y_i|^{n_i} }
   {(\sum_{i=2}^{v-1} |y_i|)^{\lambda + \sum_{i=1}^m n_i/q'_i}
     (\log 1/|y_{v-1}|)^{(1+\varepsilon)/q'_m} }.
\end{align*}
Computing the $L_{y_j}^{p'_j}$ norm with respect to
$y_j\in \{y_j:\,|y_{v-1}|^2 \le |y_j| \le |y_{v-1}|\}$ successively for $2\le j\le v-2$, we have
\begin{align*}
  \| I_g(y)\chi^{}_{E}(y)\|_{L_{(y_1,\ldots,y_{v-2})
    }^{(p'_1,\ldots,p'_{v-2})}}
&\gtrsim
   \frac{ |y_{v-1}|^{N_m  + n_1/p'_1+\ldots +n_{v-2}/p'_{v-2}}
     \chi^{}_{\{|y_{v-1}|\le 1/2\}}(y_{v-1})}
   {   |y_{v-1}|^{\lambda + \sum_{i=1}^m n_i/q'_i}
     (\log 1/|y_{v-1}|)^{(1+\varepsilon)/q'_m} } \\
&=
   \frac{ \chi^{}_{\{|y_{v-1}|\le 1/2\}}(y_{v-1})  }
   {   |y_{v-1}|^{ n_{v-1}/p'_{v-1}}
     (\log 1/|y_{v-1}|)^{(1+\varepsilon)/q'_m} }.
\end{align*}

If  $p_{v-1}=\infty$, then $p'_{v-1}=1<q'_m$. We can choose $\varepsilon>0$ small enough such that
$p'_{v-1} (1+\varepsilon)/q'_m<1$. Consequently,
\[
  \| I_g(y)\chi^{}_{E}(y)\|_{L_{(y_1,\ldots,y_{v-1})}^{(p'_1,\ldots,p'_{v-1})}}
  =\infty,
\]
which contradicts (\ref{eq:s:e2a}). Hence $p_{v-1}<\infty$.
\end{proof}

The Calder\'on-Zygmund theory for Bochner spaces
were well established, e.g., see
\cite[Theorem II.1.3]{Rubio1986},
\cite[Theorem 5.17]{Duoandikoetxea2001},
\cite[Theorem 5.6.1]{Grafakos2014}
 or
\cite[Theorem 2.1]{Torres2015}.
Here we present a similar result
for mixed-norm Lebesgue spaces.

\begin{Proposition}\label{prop:singular T}
Let $n_1$, $\ldots$, $n_m$ be positive integers, $N=n_1+\ldots+n_m$,
$\vec u=(u_1,\ldots,u_m)$ and
$\vec v=(v_1,\ldots,v_m)$ with $1\le u_i,v_i\le\infty$.
Suppose that $K(x,y; \bar x,\bar y)$ is a measurable function
defined on $\bbR^{N+n}\times\bbR^{N+n}\setminus
\{(x,y; \bar x,\bar y):\, |x-\bar x| + |y-\bar y|=0\}$ such that
for some $A >0$ and
$\alpha >\max \{\sum_{i=1}^m  n_i/u'_i,
\sum_{i=1}^m  n_i/v_i \}$,
\begin{equation}\label{eq:ec:K}
  |K(x,y; \bar x,\bar y)| \le \frac{A}{(|x-\bar x| + |y-\bar y|)^{\alpha}}.
\end{equation}
Let $T$ be a linear operator which is bounded from
$L^r(L^{\vec u})$ to $L^r(L^{\vec v})$ for some
$1<r<\infty$
and satisfies the followings,

\begin{enumerate}
\item If  $\|f(\cdot,\bar y)\|_{L^{\vec u}}\in L^{\infty}$ is compactly supported,
then
\[
  Tf(x,y)= \int_{\bbR^{N+n}} K(x,y; \bar x,\bar y)f(\bar x, \bar y)
  d\bar x d\bar y,\qquad y\not\in\mathrm{supp}\,   \|f(\cdot,\bar y)\|_{L^{\vec u}}.
\]

\item For any $y\ne \bar y$,
\[
  T_{y,\bar y} h(x):= \int_{\bbR^N} K(x,y; \bar x,\bar y)h(\bar x)
  d\bar x
\]
defines a bounded linear operator from $L^{\vec u}$ to $L^{\vec v}$
and there is some constant $C$ such
that
\begin{align*}
\int_{|y-\bar y|\ge 2|\bar y-\bar z|}
\| T_{y,\bar y} - T_{y,\bar z}\|_{L^{\vec u}\rightarrow L^{\vec v}}   dy
 \le  C, \qquad \forall \bar y\ne \bar z,\\
\int_{|y-\bar y|\ge 2|  y-  z|}
\| T_{y,\bar y} - T_{z,\bar y}\|_{L^{\vec u}\rightarrow L^{\vec v}}   d\bar y
 \le  C,\qquad \forall   y\ne z .
\end{align*}
\end{enumerate}
Then $T$ extends to a bounded linear operator from $L^p(L^{\vec u})$
to $L^p(L^{\vec v})$ for all $1<p<\infty$.
\end{Proposition}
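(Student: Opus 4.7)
The plan is to carry out the classical Calderón-Zygmund program in the mixed-norm setting: establish a weak-type $(1,1)$ endpoint from $L^1(L^{\vec u})$ into weak $L^1(L^{\vec v})$, interpolate with the hypothesized $L^r\to L^r$ bound via Marcinkiewicz to cover $1<p<r$, and dualize to obtain $r<p<\infty$.

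For the weak $(1,1)$ bound, given $f\in L^1(L^{\vec u})$ and $\alpha>0$, apply the scalar Calderón-Zygmund decomposition at level $\alpha$ to the non-negative function $\phi(\bar y):=\|f(\cdot,\bar y)\|_{L^{\vec u}}\in L^1(\bbR^n)$. This produces pairwise disjoint dyadic cubes $\{Q_j\}\subset\bbR^n$ with $\sum_j|Q_j|\lesssim \alpha^{-1}\|f\|_{L^1(L^{\vec u})}$ and $\phi\le\alpha$ a.e.\ outside $\bigcup_j Q_j$. Split $f=g+b$ with $b=\sum_j b_j$ and
\[
b_j(\bar x,\bar y)=\chi^{}_{Q_j}(\bar y)\Big(f(\bar x,\bar y)-\frac{1}{|Q_j|}\int_{Q_j}f(\bar x,\bar z)\,d\bar z\Big).
\]
Standard arguments (applied inside $L^{\vec u}$) give $\|g(\cdot,\bar y)\|_{L^{\vec u}}\lesssim\alpha$ a.e., hence $\|g\|_{L^r(L^{\vec u})}^r\lesssim\alpha^{r-1}\|f\|_{L^1(L^{\vec u})}$; each $b_j$ has vanishing $L^{\vec u}$-valued mean in $\bar y$ and $\sum_j\|b_j\|_{L^1(L^{\vec u})}\lesssim\|f\|_{L^1(L^{\vec u})}$. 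Chebyshev combined with the $L^r\to L^r$ boundedness handles $g$. For $b$, let $Q_j^*$ denote a fixed dilate of $Q_j$ centred at $\bar y_j$; since $E:=\bigcup_j Q_j^*$ already has measure $\lesssim\alpha^{-1}\|f\|_{L^1(L^{\vec u})}$, it suffices to bound $\int_{E^c}\|Tb_j(\cdot,y)\|_{L^{\vec v}}\,dy$. By hypothesis (i) (integral representation away from support) together with the vanishing mean,
\[
Tb_j(\cdot,y)=\int_{Q_j}(T_{y,\bar y}-T_{y,\bar y_j})[b_j(\cdot,\bar y)]\,d\bar y,\qquad y\notin Q_j^*.
\]
Taking the $L^{\vec v}$-norm, applying Minkowski's integral inequality, integrating in $y$ over $(Q_j^*)^c$, and invoking the first Hörmander-type hypothesis on $T_{y,\bar y}-T_{y,\bar z}$ (valid because $|y-\bar y_j|\ge 2|\bar y-\bar y_j|$ for $\bar y\in Q_j$, $y\notin Q_j^*$) yields $\int_{(Q_j^*)^c}\|Tb_j(\cdot,y)\|_{L^{\vec v}}\,dy\lesssim\|b_j\|_{L^1(L^{\vec u})}$. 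Summing in $j$ and applying Chebyshev delivers the weak-type estimate.

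Marcinkiewicz interpolation between this weak $(1,1)$ bound and the strong $(r,r)$ assumption then gives $T:L^p(L^{\vec u})\to L^p(L^{\vec v})$ for all $1<p<r$. To reach $r<p<\infty$, dualize: the adjoint $T^\ast$ has kernel $\overline{K(\bar x,\bar y;x,y)}$, which satisfies (\ref{eq:ec:K}) with the same decay, is bounded from $L^{r'}(L^{\vec v\,\!'})$ to $L^{r'}(L^{\vec u\,\!'})$, and the second Hörmander-type hypothesis on $T_{y,\bar y}-T_{z,\bar y}$ plays for $T^\ast$ the role that the first played for $T$. Repeating the weak-type/Marcinkiewicz step for $T^\ast$ and dualizing back produces the remaining range of $p$.

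The main technical obstacle lies in rigorously justifying the integral identity for $Tb_j(\cdot,y)$ and the ensuing Minkowski step: one must verify that the vector-valued integrand $\bar y\mapsto(T_{y,\bar y}-T_{y,\bar y_j})[b_j(\cdot,\bar y)]$ is strongly measurable and absolutely convergent in $L^{\vec v}$, and that $y\mapsto\|Tb_j(\cdot,y)\|_{L^{\vec v}}$ is measurable in $y$. This relies on the pointwise decay (\ref{eq:ec:K}), the qualitative hypothesis (ii) providing the operator-norm bounds, together with the measurability and density results of Sections 3 and 4, since $L^{\vec u}$ need not be separable and the standard Bochner-integral formalism cannot be invoked directly.
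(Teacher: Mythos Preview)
Your proposal is correct and follows essentially the same approach as the paper: Calder\'on--Zygmund decomposition of $\|f(\cdot,\bar y)\|_{L^{\vec u}}$, the same good/bad splitting, Chebyshev plus the $L^r$ bound for $g$, the H\"ormander condition for $b$, then interpolation for $1<p<r$ and duality for $r<p<\infty$. The only nuance is that where you invoke ``Marcinkiewicz,'' the paper uses its real-interpolation machinery (Proposition~\ref{prop:interpolation} together with Corollary~\ref{Co:Lorentz}) precisely because one must identify $(L^{1}(L^{\vec u}),L^{r}(L^{\vec u}))_{\theta,p}$ and $(L^{1,\infty}(L^{\vec v}),L^{r}(L^{\vec v}))_{\theta,p}$ in the mixed-norm (not Bochner) setting---a point you correctly flag in your final paragraph.
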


\textbf{Remark}.\,\,
Note that the condition (\ref{eq:ec:K}) ensures that $T_{y,\bar y} h$ is well defined pointwise on $\bbR^N$.
Moreover, $K$ meets (\ref{eq:ec:K}) whenever
\[
  |K(x,y; \bar x,\bar y)| \le \frac{A}{(|x-\bar x| + |y-\bar y|)^{N+n}}.
\]

\begin{proof}
Fix some $f\in L^1(L^{\vec u})\bigcap L^r(L^{\vec u})$ and $\lambda>0$.
Applying the Calder\'{o}n-Zygmund decomposition  of  $\|f(\cdot,y)\|_{L^{\vec u}}$
at the height $\lambda$, we get disjoint cubes $\{Q_{k}:\, k\in \mathbb{K}\}$  such that
\[
    \|f(\cdot,y)\|_{L^{\vec u}} \le \lambda,\qquad y\not\in \bigcup_{k\in\mathbb K} Q_k
\]
and
\[
   \lambda \le \frac{1}{|Q_k|} \int_{Q_k} \|f(\cdot,y)\|_{L^{\vec u}} dy \le 2^n\lambda, \qquad k\in \mathbb{K}.
\]
Define
\[
g(x,y)=\left\{ \begin{array}{ll}
  f(x,y),  &y\not\in \bigcup_{k\in\mathbb K} Q_k, \\
  |Q_{k}|^{-1}\int_{Q_{k}}f(x,z)dz,  &y\in  Q_{k},
\end{array}
\right.
\]
and
\[
b_{k}(x,y)=\left\{\begin{array}{ll}
  0, & y\not\in Q_{k} ,\\
  f(x,y)-|Q_{k}|^{-1}\int_{Q_{k}}f(x,z)dz, &y\in Q_{k}.
\end{array}\right.
\]
Let   $b(x,y) = \sum_{k\in\mathbb K} b_k(x,y)$.
Then $f=g+b$ and  for any $\lambda>0$,
\begin{equation}\label{eq:lambda}
  \{y:\,\|(Tf)(\cdot,y)\|_{L^{\vec u}} \ge \lambda\}
  \subset  \{y:\,\|(Tg)(\cdot,y)\|_{L^{\vec u}} \ge \frac{\lambda}{2}\} \cup
   \{y:\,\|(Tb)(\cdot,y)\|_{L^{\vec u}} \ge \frac{\lambda}{2}\}.
\end{equation}

Fix some $\gamma>2n^{1/2}+1$.
For each $k\in\mathbb K$,  Let $Q_k^*$
be the cube with the same center $y_k$ as  $Q_{k}$
and $\gamma$ times the length. Let $B^{\ast}=\bigcup_{k\in\mathbb K}
Q_{k}^{\ast}$,
$G^{\ast}=\mathbb{R}^{n}\setminus{B}^{\ast}$ and
$G = \bbR^n\setminus \bigcup_{k\in\mathbb K}
Q_{k}$.
Then we have
\begin{align}
\hskip 10mm&\hskip -10mm\Big|\{y:\,\|Tg(\cdot,y)\|_{L^{\vec v}}\geq \frac{\lambda}{2}|\}\Big| \nonumber \\
&\le \frac{2^r}{\lambda^r} \int_{\{y:\,\|Tg(\cdot,y)\|_{L^{\vec v}} \geq \frac{\lambda}{2}|\}} \|Tg(\cdot,y)\|_{L^{\vec v}}^r dy \nonumber \\
&= \frac{2^r\|T\|_{L^r(L^{\vec u})\rightarrow L^r(L^{\vec v})}^r}{\lambda^r}
      \int_{\bbR^{ n}} \|g(\cdot,y)\|_{L^{\vec u}}^r dy \nonumber \\
&\le \frac{C}{\lambda^r}\Big(
   \int_{G} \|g(\cdot,y)\|_{L^{\vec u}}^rdy + \sum_{k\in\mathbb{K}} \int_{Q_k}\!\!  \|g(\cdot,y)\|_{L^{\vec u}}^r dy
       \Big) \nonumber \\
&\le \frac{C}{\lambda^r}\Big(
   \int_{G}\! \lambda^{r-1} \|f(\cdot,y)\|_{L^{\vec u}} dy +(2^n\lambda)^{r-1}  \sum_{k\in\mathbb{K}} \int_{Q_k}\!\!\! \|f(\cdot,y)\|_{L^{\vec u}}dy
       \!\Big) \nonumber \\
&\lesssim     \frac{1}{\lambda} \|f\|_{L^1(L^{\vec u})}.
    \label{eq:tf1}
\end{align}
On the other hand, we have
\begin{align}
\Big|\{y:\, \|Tb(\cdot,y)\|_{L^{\vec v}}\geq \frac{\lambda}{2} \}\Big|
&\le  |B^*| + \Big|\{y\in G^*:\,\|Tb(\cdot,y)\|_{L^{\vec v}}\geq \frac{\lambda}{2} \}\Big| \nonumber \\
&\le  \gamma^n \Big| \bigcup_{k\in\mathbb K} Q_k\Big| + \frac{2}{\lambda}\int_{G^*}\|Tb(\cdot,y)\|_{L^{\vec v}} dy\nonumber \\
&\le   \frac{\gamma^n}{\lambda}\|f\|_{L^1(L^{\vec u})} +
  \frac{2}{\lambda} \sum_{k\in\mathbb K} \int_{G^*}\|Tb_k(\cdot,y)\|_{L^{\vec v}}dy. \label{eq:K:e2}
\end{align}
Note that
\begin{align*}
\int_{G^*}\|Tb_k(\cdot,y)\|_{L^{\vec v}}dy
&\le   \int_{\bbR^n\setminus Q_k^*}\|Tb_k(\cdot,y)\|_{L^{\vec v}}dy \nonumber \\
&=   \int_{\bbR^n\setminus Q_k^*} dy
  \Big\|\int_{\bbR^N\times Q_k} K(x ,y ;\bar x,\bar y) b_k(\bar x,\bar y) d\bar xd\bar y \Big\|_{L_x^{\vec v}}.
\end{align*}
Since $b_k\in L^1(L^{\vec u})\cap L^r (L^{\vec u})$, by (\ref{eq:ec:K}),
$\int_{\bbR^N\times Q_k} |K(x ,y ;\bar x,\bar y) b_k(\bar x,\bar y)| d\bar xd\bar y<\infty$. Now we see from Fubini's theorem that
\begin{eqnarray}
 \arraycolsep 0pt
 &&\int_{G^*}\|Tb_k(\cdot,y)\|_{L^{\vec v}}dy\nonumber \\
&\le&   \int_{\bbR^n\setminus Q_k^*} dy
  \Big\|\int_{\bbR^N \times Q_k}( K(x ,y ;\bar x,\bar y)-K(x ,y ;\bar x, y_k)) b_k(\bar x,\bar y) d\bar xd\bar y\Big\|_{L_x^{\vec v}} \nonumber \\
&\le &   \int_{\bbR^N\setminus Q_k^*} dy
  \int_{Q_k}\Big\|\int_{\bbR^m }( K(x ,y ;\bar x,\bar y)-K(x ,y ;\bar x, y_k)) b_k(\bar x,\bar y) d\bar x\Big\|_{L_x^{\vec v}}d\bar y \nonumber \\
&\le &   \int_{\bbR^N\setminus Q_k^*} dy
  \int_{Q_k}
  \|T_{y,\bar y} - T_{y,y_k}\|_{L^{\vec u}\rightarrow L^{\vec v}}
     \|b_k(\cdot,\bar y)\|_{L^{\vec u}}
  d\bar y \nonumber \\
 &\le&
 C  \int_{  Q_k}
    \| b_k(\cdot,\bar y)\|_{L^{\vec u}} d\bar y      \nonumber \\
 &\le&
 C  \int_{  Q_k}
    \|f(\cdot,\bar y)\|_{L^{\vec u}} d\bar y.
     \label{eq:tem2}
\end{eqnarray}
It follows from (\ref{eq:K:e2}) that
\[
  \Big|\{y:\, \|Tb(\cdot,y)\|_{L^{\vec v}}\geq \frac{\lambda}{2} \}\Big|
  \le \frac{C}{\lambda} \|f\|_{L^1(L^{\vec u})}.
\]
By (\ref{eq:lambda}) and (\ref{eq:tf1}), we get
for any $f\in L^1(L^{\vec u})\cap L^r(L^{\vec u})$,
\begin{equation}\label{eq:K:e3}
\|Tf\|_{L^{1,\infty}(L^{\vec v})} \le C \|f\|_{L^1(L^{\vec u})}.
\end{equation}

For any $f\in L^1(L^{\vec u})$ and $k\ge 1$,
let $f_k = f \cdot \chi^{}_{\{\|f(\cdot,y)\|_{L^{\vec u}} \le k\}}$.
Then $f_k \in L^1(L^{\vec u})\cap L^r(L^{\vec u})$.
Moreover, we see from the dominated convergence theorem
for $L^1(\bbR^n)$ that
\[
  \lim_{k\rightarrow\infty}\|f-f_k\|_{L^1(L^{\vec u})} = 0.
\]
It follows from (\ref{eq:K:e3}) that
$T$ extends to a linear bounded operator from
$L^1(L^{\vec u})$ to $ L^{1,\infty}(L^{\vec v})$.
Since $T$ is bounded from
$L^r(L^{\vec u})$ to $ L^r(L^{\vec v})$,
by Proposition~\ref{prop:interpolation} and
Corollary~\ref{Co:Lorentz},
$T$ is bounded from
$L^p(L^{\vec u})$ to $ L^p(L^{\vec v})$ whenever $1<p<r$.

On the other hand, we see from \cite[\S2, Theorem 2]{Benedek1961}
that for any measurable function $f$ on $\bbR^{N+n}$,
\[
  \|f\|_{L^r(L^{\vec u})}
     = \sup_{\|g\|_{L^{r'}(L^{\vec u\ \!\!'})}=1}
         \Big|\int fg \Big|.
\]
Hence $T$ is bounded from $L^r(L^{\vec u})$ to
$L^r(L^{\vec v})$ if and only if
$T^*$ is bounded from
$L^{r'}(L^{\vec v\ \!\!'})$
to $L^{r'}(L^{\vec u\ \!\!'})$.
Moreover,  if  $\|g(\cdot,y)\|_{L^{\vec v\ \!\!'}}\in L^{\infty}$
is compactly supported,
then
\[
  T^*g(\bar x, \bar y)= \int_{\bbR^{N+n}} K(x,y; \bar x,\bar y)g( x, y)
  d x dy,\qquad \bar y\not\in\mathrm{supp}\,   \|g(\cdot, y)\|_{L^{\vec u}}.
\]
The previous arguments show that
$T^*$ is bounded from
$L^{p}(L^{\vec v\ \!\!'})$
to $L^{p}(L^{\vec u\ \!\!'})$ whenever $1<p<r'$.
Hence $T$ is bounded from
$L^{p}(L^{\vec u})$
to $L^{p}(L^{\vec v})$ whenever $r<p<\infty$.
\end{proof}

We are now ready to prove Theorem~\ref{thm:mixed fractional}.
We split the proof into two subsections.
One is for the necessity, and the other is for the sufficiency.

\subsection{Proof of Theorem~\ref{thm:mixed fractional}:
The Necessity}

Suppose that $I_{\lambda} $ is bounded from $L^{\vec p}$ to $L^{\vec q}$.
We prove the conclusion in several steps.

Recall that the homogeneity condition (\ref{eq:homo:m})
and the relationship $p_i\le q_i$ for the case
 $1<p_i<\infty$   were proved in
\cite{AdamsBagby1974}. For the sake of completeness, we include a proof here.

(S1)\,   We prove that $\vec p$, $\vec q$ and $\lambda$ meet (\ref{eq:homo:m}).

Take some $f\in L^{\vec p}$.
  Let $a>0$ and $f_a = f(\cdot/a)$. It is easy to check that
\begin{align*}
I_{\lambda} f_a(x) = a^{N_m-\lambda} I_{\lambda} f\Big(\frac{x}{a}\Big).
\end{align*}
By the hypothesis, $\|I_{\lambda} f_a\|_{L^{\vec q}} \lesssim \|f_a\|_{L^{\vec p}}$. Hence
\[
  a^{N_m-\lambda} a^{n_1/q_1+\ldots+n_m/q_m} \|I_{\lambda}f\|_{L^{\vec q}}
  \lesssim a^{n_1/p_1+\ldots+n_m/p_m} \|f\|_{L^{\vec p}}.
\]
Since $a$ is arbitrary, we get
(\ref{eq:homo:m}).

(S2)\, We prove that $p_i\ge 1$ for all $1\le i\le m$.

Assume on the contrary that $p_{i_0}<1$ for some $1\le i_0\le m$.
Observe that
\begin{align*}
\lambda + \sum_{\substack{1\le i\le m\\ i\ne i_0}} \frac{n_i}{p_i}
 - \sum_{\substack{1\le i\le m\\ i\ne i_0}} n_i
&= \frac{n_{i_0}}{p'_{i_0}} + \sum_{i=1}^m \frac{n_i}{q_i}
 <   \sum_{i=1}^m \frac{n_i}{q_i}.
\end{align*}
If $\vec q\ne \vec \infty$, there is some $\alpha> \sum_{\substack{1\le i\le m\\ i\ne i_0}} {n_i}/{p_i}$ such that
\begin{equation}\label{eq:ea1}
0<\lambda + \alpha
 - \sum_{\substack{1\le i\le m, i\ne i_0}} n_i
 <   \sum_{i=1}^m \frac{n_i}{q_i}.
\end{equation}
Let
\begin{equation}\label{eq:ea2}
  f(y_1,\ldots,y_m)
    = \frac{\chi^{}_{|y_{i_0}|\le 1}(y_{i_0})}
      {(1+\sum_{\substack{1\le i\le m, i\ne i_0}} |y_i|)^{\alpha}}.
\end{equation}
Then we have $f\in L^{\vec p}$.
It follows  that for any $(x_1,\ldots,x_m)\in\bbR^{N_m}$,
\begin{align}
 I_{\lambda}f(x_1,\ldots,x_m)
&= \int_{\substack{y_i\in\bbR^{n_i}, i\ne i_0\\
   |y_{i_0}|\le 1}}
  \frac{dy_1\ldots   dy_m}
  {
  (\sum_{i=1}^m |x_i-y_i|)^{\lambda}
  (1+\sum_{\substack{1\le i\le m\\ i\ne i_0}} |y_i|)^{\alpha}}\nonumber  \\
&\gtrsim \int_{ y_i\in\bbR^{n_i}, i\ne i_0 }
  \frac{dy_1\ldots dy_{i_0-1} dy_{i_0+1}\ldots dy_m}
  {
  \big(1+|x_{i_0}|+\sum_{\substack{1\le i\le m\\ i\ne i_0}} |x_i-y_i|\big)^{\lambda}
  \big(1+\sum_{\substack{1\le i\le m\\ i\ne i_0}} |y_i|\big)^{\alpha}} \nonumber \\
&\gtrsim \int_{ y_i\in\bbR^{n_i}, i\ne i_0 }
  \frac{dy_1\ldots dy_{i_0-1} dy_{i_0+1}\ldots dy_m}
  {
  \big(1+|x_{i_0}|+\sum_{\substack{1\le i\le m\\ i\ne i_0}} (|x_i-y_i| + |y_i|)\big)^{\lambda +\alpha}} \nonumber \\
&\approx \int_{ y_i\in\bbR^{n_i}, i\ne i_0 }
  \frac{dy_1\ldots dy_{i_0-1} dy_{i_0+1}\ldots dy_m}
  {
  \big(1+|x_{i_0}|+\sum_{\substack{1\le i\le m\\ i\ne i_0}} (|x_i| + |y_i|)\big)^{\lambda +\alpha}} \label{eq:ea:a19}\\
&\approx
  \frac{1}
  {
  \big(1 +\sum_{i=1}^m |x_i|  \big)^{\lambda +\alpha-n_1-\ldots - n_{i_0-1}-n_{i_0+1}-\ldots-n_m}}.\nonumber
\end{align}
By (\ref{eq:ea1}),  $I_{\lambda} f \not\in L^{\vec q}$,
which contradicts the hypothesis.

If  $\vec q=\vec\infty$,
 then  there is some $\alpha> \sum_{ {1\le i\le m, i\ne i_0}} {n_i}/{p_i}$ such that
\[
  \lambda + \alpha
 - \sum_{ {1\le i\le m, i\ne i_0}} n_i
 < 0.
\]
Let $f$ be defined as in (\ref{eq:ea2}).
For any $(x_1,\ldots,x_m)\in\bbR^{N_m}$,
 we see from (\ref{eq:ea:a19}) that
$
  I_{\lambda} f(x_1,\ldots,x_m)
   = \infty$.
Again, we get a  contradiction.

(S3)\, We prove that
$q_i\ge 1$, $1\le i\le m$.

Assume that there is some $i_0$ such that $q_{i_0}<1$.
Set
\[
  f(y_1,\ldots,y_m) = \frac{\chi^{}_{\{  |y_i|\le 1, 1\le i\le m\}}( y_1,\ldots,y_m ) }
  {(\sum_{i\ne i_0} |y_i|)^{\alpha} },
\]
where $0<\alpha< \sum_{i\ne i_0}  n_i/p_i$.
Then $f\in L^{\vec p}$.

Suppose that $|x_i| \le 1/m$ for all $1\le i\le m$.
We have
\begin{align}
 I_{\lambda}f(x_1,\ldots,x_m)
&\ge
  \int_{\substack{|x_i-y_i|\le \sum_{j\ne i_0}|x_j|  \\
      1\le i\le m}}
  \frac{ d y_1\ldots dy_m  }
        {
        (\sum_{i\ne i_0} |y_i| )^{\alpha}
        (\sum_{i=1}^m |x_i-y_i|)^{\lambda}} \nonumber \\
&\gtrsim
  \frac{1}{(\sum_{i\ne i_0} |x_i|)^{\lambda+\alpha - N_m}}.\nonumber
 \end{align}
Note that
\begin{align*}
\lambda -N_m
   + \sum_{i\ne i_0}  \frac{n_i}{p_i}
&=  \Big(\frac{n_{i_0}}{q_{i_0}} -n_{i_0}\Big)
   + \frac{n_{i_0}}{p'_{i_0}}
   + \sum_{i\ne i_0}    \frac{n_i}{q_i}
   > \sum_{i\ne i_0}    \frac{n_i}{q_i}.
\end{align*}
There is some $0<\alpha<\sum_{i\ne i_0} n_i/p_i$
such that
$
  \lambda -N_m
   +  \alpha
   > \sum_{i\ne i_0}   {n_i}/{q_i}
$.
Consequently,
\[
\|I_{\lambda}f\|_{L^{\vec q}}
\ge
\bigg\|\frac{\chi^{}_{\{|x_i|\le  1/ m   :\, 1\le i\le m\}(x_1,\ldots,x_m)}
}{(\sum_{i\ne i_0}  |x_i|)^{\lambda-N_m+\alpha}}
\bigg \|_{L^{\vec q}}
=\infty,
\]
which contradicts the hypothesis.

(S4)\, We prove that
$q_i\ge p_i$ for all $1\le i\le m$.

First, we assume that $p_1,q_1<\infty$.
Take some $f\in L^{\vec p}$ and $z\in\bbR^{n_1}$.
Set $f_z(y_1,\ldots,y_m) = f(y_1-z,y_2,\ldots, y_m)$.
Then we have
\[
 \| I_{\lambda}f_z + I_{\lambda}f\|_{L^{\vec q}}
 \le  \|I_{\lambda}\|\cdot \|f_z + f\|_{L^{\vec p}}.
\]
By letting $|z|\rightarrow \infty$, we see from
Lemma~\ref{Lm: translation} that
\[
\lim_{|z|\rightarrow\infty} \| I_{\lambda}f_z + I_{\lambda}f\|_{L^{\vec q}}
 \le  2^{1/p_1} \|I_{\lambda}\|\cdot \|f\|_{L^{\vec p}}.
\]

On the other hand, we see from the definition of
$I_{\lambda}$ that
\begin{align*}
I_{\lambda}f_z(x_1,\ldots,x_m)
&=I_{\lambda}f(x_1-z,x_2,\ldots,x_m).
\end{align*}
Applying Lemma~\ref{Lm: translation} again, we get
\[
\lim_{|z|\rightarrow\infty}
\|I_{\lambda}f_z  +
I_{\lambda}f \|_{L^{\vec q}}
=
2^{1/q_1} \|I_{\lambda}f \|_{L^{\vec q}}.
\]
Hence
\[
  2^{1/q_1} \|I_{\lambda}f \|_{L^{\vec q}}\le 2^{1/p_1} \|I_{\lambda}\| \cdot \|f\|_{L^{\vec p}}.
\]
Therefore, $q_1\ge p_1$.

Next we assume that $p_1=\infty$.
We conclude that $q_1=\infty$ in this case.

In fact, if $1<q_1<\infty$, then $1=p'_1<q'_1<\infty$. On the other hand, we see from the duality that $I_{\lambda}$ is bounded from
$L^{\vec q\ \!\!'}$ to $L^{\vec p\ \!\!'}$. Therefore,
$p'_1\ge q'_1$, which is a contradiction.

If $q_1=1$, then $q'_1=\infty$.
For any $f(y_1,\ldots,y_m)=f_1(y_2,\ldots,y_m)\in L^{\vec p}$
and $g(x_1,\ldots,x_m)$ $=g_1(x_2,\ldots,x_m)\in L^{\vec q\ \!\!'}$, we have
\[
  \int_{\bbR^{2N_m}}
   \frac{f_1(y_2,\ldots,y_m) g_1(x_2,\ldots,x_m)}
   {(\sum_{i=1}^m|x_i-y_i|)^{\lambda}} dx_1dy_1\ldots dx_m dy_m \lesssim
   \|f\|_{L^{\vec p}} \|g\|_{L^{\vec q\ \!\!'}}.
\]
But the integration with respect to $x_1$ and $y_1$
is the infinity whenever $f,g>0$, which is a contradiction.

Similarly we can prove that $q_i\ge p_i$ for all $2\le i\le m$.

(S5)
We prove that there exist some $i$ and $j$ such that
$1<p_i<q_i $ and $p_j<q_j < \infty$.

If $p_i=q_i$ for all $1\le i\le m$, then we have $\lambda=N_m$, which is impossible since $I_{\lambda}$ is unbounded in this case.

If $\vec p =\vec 1$, then we have
$\lambda  = n_1/q_1+\ldots+n_m/q_m$.
Set $f = \chi^{}_{\{|y_i|<1:\, 1\le i\le m\}}$.
It is easy to see that $I_{\lambda}f(x) \gtrsim 1/(|x_1|+\ldots+|x_m|)^{\lambda}$ whenever
$|x_i| >1$, $1\le i\le m$. Hence
$I_{\lambda} f\not\in L^{\vec q}$, which is a contradiction. By the duality, we also have $\vec q\ne \vec\infty$.

Denote $ \{1,\ldots,m\}\setminus\{i:\, p_i=q_i, 1\le i\le m\}$ by $\{i_l:\, 1\le l\le r\}$, where $i_1<\ldots<i_r$.
By Lemma~\ref{Lm:structure}, the mapping
\[
  f\mapsto \int_{\bbR^{n_{i_1}+\ldots + n_{i_r}}}
    \frac{f(y_{i_1},\ldots,y_{i_r}) dy_{i_1}\ldots dy_{i_r}}
    {(\sum_{l=1}^r |x_{i_l}-y_{i_l}|)^{\lambda}}
\]
is bounded from $L^{(p_{i_1},\ldots,p_{i_r})}$
to  $L^{(q_{i_1},\ldots,q_{i_r})}$. Now we see from the above arguments that there exist some $i$ and $j$ such that $1<p_i<q_i$ and $p_j<q_j<\infty$.

(S6) We prove by induction on $m$ that
$(\vec p, \vec q)\in \Gamma_{\lambda,m}$.

  For $m=1$, we get the conclusion by Proposition~\ref{prop:fractional}.

Assume that the conclusion is true when $m$ is replaced by $m-1$ for some $m\ge 2$.
There are five cases.

(A1)\,\, $1<p_m<q_m<\infty$.

In this case, nothing is to be proved.

(A2)\,\, $1=p_m$ and $q_m<\infty$.

We see from (S5) that there is some $i_0$ such that $1<p_{i_0}<q_{i_0}$.
Let $i_1 = \max\{i:\, 1<p_i<q_i\}$. Then
$1\le i_1 \le m-1$
and $p_i=1$ or $p_i=q_i$ for all $i_1+1\le i\le m-1$.
Let $\Lambda_0 = \{i:\, i_1+1 \le i\le m-1\mbox{ and } p_i = q_i>1
 \}$ and $\Lambda_1 = \{1,\ldots,m\}\setminus \Lambda_0$.

For any $i\in \Lambda_0$, denote
$\{i\}\cup \Lambda_1$ by $\{j_l:\, 1\le l\le r\}$,
where $j_1<\ldots < j_r$, $r=\#\Lambda_1+1$ and $\#\Lambda_1$
stands for the cardinality of $\Lambda_1$. Applying Lemma~\ref{Lm:structure} many times, we see that
the mapping
\[
T_r:\,
  f\mapsto \int_{\bbR^{n_{j_1}+\ldots + n_{j_r}}}
    \frac{f(y_{j_1},\ldots,y_{j_r}) dy_{j_1}\ldots dy_{j_r}}
    {(\sum_{l=1}^r |x_{j_l}-y_{j_l}|)^{
     \sum_{l=1}^r n_{j_l}/p'_{j_l} + n_{j_l}/q_{j_l}
    }}
\]
is bounded from $L^{(p_{j_1},\ldots,p_{j_r})}$
to   $L^{(q_{j_1},\ldots,q_{j_r})}$.

Suppose that $i = j_{l_0}$. Then $l_0<r$
and $p_{j_l}=1$ when $l_0+1 \le l\le r$.
By Lemma~\ref{Lm:pi}, $q_{j_r}\ge p_i$. That is,
$q_m\ge p_i$ for all $i\in \Lambda_0$.

On the other hand, if we denote $ \Lambda_1$ by $\{j_l:\, 1\le l\le r\}$, where $r=\#\Lambda_1$, then $T_r$ is also bounded from $L^{(p_{j_1},\ldots,p_{j_r})}$
to   $L^{(q_{j_1},\ldots,q_{j_r})}$.
Observe that $j_l=l$ for all $1\le l\le i_1$
and $p_{j_l}=1$ for all $i_1+1\le l\le r$.
Applying Lemma~\ref{Lm:pi} again,
we get $q_m\ge p_{i_1}$.
Hence $q_m\ge p_i$ for all $i_1\le i\le m$. Therefore, $(\vec p, \vec q)$ meets (T2).

(A3)\,\, $1<p_m$ and $q_m=\infty$.

Since $I_{\lambda}$ is bounded from $L^{\vec p}$ to $L^{\vec q}$
if and only if it is bounded from $L^{\vec q\ \!\!'}$ to $L^{\vec p\ \!\!'}$,
we see from Case (A2) that
$(\vec q\ \!', \vec p\ \!')$ meets (T2). That is, $(\vec p, \vec q)$ meets (T3).

(A4)\,\, $p_m=1$ and $q_m=\infty$.

By Lemma~\ref{Lm:1:infty},
the mapping
\[
  f\mapsto \int_{\bbR^{N_m-n_m}}
    \frac{f(y_1,\ldots,y_{m-1}) dy_1\ldots dy_{m-1}}
    {(\sum_{i=1}^{m-1} |x_i-y_i|)^{\lambda}}
\]
is bounded from
$L^{(p_1,\ldots,p_{m-1})}$
to $L^{(q_1,\ldots,q_{m-1})}$.
By the inductive assumption, $(p_1$, $\ldots$, $p_{m-1}$, $q_1$, $\ldots$, $q_{m-1})\in
 \Gamma_{\lambda,m-1}$.

(A5)\,\, $p_m=q_m$.

If $p_m=1$, then we see from (A2) that $q_m>1$, which contradicts the assumption.
If $q_m=\infty$, then we see from (A3) that $p_m<\infty$, which also contradicts  the assumption.
Hence $1<p_m=q_m<\infty$.
Now we see from Lemma~\ref{Lm:structure}
that $(p_1,\ldots,p_{m-1},q_1,\ldots,q_{m-1})\in \Gamma_{\lambda-n_m,m-1}$.

\subsection{Proof of Theorem~\ref{thm:mixed fractional}:
The  Sufficiency}

Assume that $(\vec p, \vec q)\in\Gamma_{\lambda,m}$.
Denote the index set $\{i:\, 1\le i\le m, (p_i,q_i)\ne (1,\infty)\}$ by $\{j_1,\ldots,j_r\}$, where $j_1<\ldots<j_r$
and $1\le r\le m$.
It is easy to check  from the definition of $\Gamma_{\lambda,m}$ that
$(\vec p, \vec q)\in\Gamma_{\lambda,m}$ if and only if
$(p_{j_1},\ldots,p_{j_r},q_{j_1},\ldots,q_{j_r})\in\Gamma_{\lambda,r}$.
By Lemma~\ref{Lm:1:infty}, we only need to consider the case
$(p_i,q_i)\ne (1,\infty)$ for all $1\le i\le m$.
There are four cases.

(B1)\,\, $1<p_m<q_m<\infty$

For any $f\in L^{\vec p}$, since $p_1\le q_1$, we deduce from Minkowski's and Young's inequalities that
\begin{align*}
 \left\|\int_{\bbR^{N_m}}\!\!\!\!\!\! \frac{f(y_1,\ldots, y_m) dy_1\ldots dy_m}
{(\sum_{i=1}^m |x_i-y_i|)^{\lambda}} \right\|_{L_{x_1}^{q_1}}
&\le\int_{\bbR^{N_m-n_1}} \!\!\left\| \int_{\bbR^{n_1}}\!\!\frac{f(y_1,\ldots, y_m) dy_1 }
{(\sum_{i=1}^m |x_i-y_i|)^{\lambda}} \right\|_{L_{x_1}^{q_1}}\!\!dy_2\ldots dy_m \\
&\lesssim
 \int_{\bbR^{N_m-n_1}} \frac{\|f(\cdot,y_2,\ldots, y_m)\|_{L_{y_1}^{p_1}} dy_2\ldots dy_m}
{(\sum_{i=2}^{m} |x_i-y_i|)^{\lambda-n_1/p'_1-n_1/q_1}}.
\end{align*}
Similar arguments show that
\begin{align*}
\hbox to 6em{}&\hskip -6em \left\|\int_{\bbR^{N_m }}\!\! \frac{f(y_1,\ldots, y_m) dy_1\ldots dy_m}
{(\sum_{i=1}^m |x_i-y_i|)^{\lambda}} \right\|_{L_{(x_1,\ldots,x_{m-1})}^{(q_1,\ldots,q_{m-1})}}
 \lesssim
 \int_{\bbR^{n_m}}\!\! \frac{\|f(,\ldots, y_m)\|_{L_{(y_1,\ldots,y_{m-1})}^{(p_1,\ldots,p_{m-1})}}  dy_m}
{ |x_m-y_m|^{ n_m/p'_m+n_m/q_m}}.
\end{align*}
Now the conclusion follows from Proposition~\ref{prop:fractional}.

(B2)\,\, $p_m=1$ and $q_m<\infty$.

In this case, $(\vec p, \vec q)$ meets (T2).
Consequently, there is some $1\le i_1\le m-1$ such that
  $1< p_{i_1}<q_{i_1}$,
  $p_i=1$ or  $p_i=q_i $   for all
   $i_1+1\le i\le m-1$, and  $p_i\le q_m$ for all
   $i_1 \le i\le m $.

Denote $\vec{\tilde p} = (p_1,\ldots, p_{m-1})$.
By Lemma~\ref{Lm:end points}, it suffices to show that for any $f\in L^{\vec {\tilde p}}$,
\[
  \bigg\|\int_{\bbR^{N_m-n_m}}
   \frac{f(y_1,\ldots, y_{m-1}) dy_1\ldots dy_{m-1}}
{(\sum_{i=1}^{m-1} |x_i-y_i|+ |x_m|)^{\lambda}} \bigg\|_{L_{(x_1,\ldots,x_m)}^{(q_1,\ldots,q_m)}}
\lesssim \|f\|_{L^{\vec {\tilde p}}}.
\]

For any $f\in L^{\vec {\tilde p}}$, since $p_1\le q_1$, we deduce from Minkowski's and Young's inequalities that
\begin{align*}
&\bigg\|\int_{\bbR^{N_m-n_m}}\!\! \frac{f(y_1,\ldots, y_{m-1}) dy_1\ldots dy_{m-1}}
{(\sum_{i=1}^{m-1} |x_i-y_i|+ |x_m|)^{\lambda}} \bigg\|_{L_{x_1}^{q_1}} \\
&\le\int_{\bbR^{N_m-n_1-n_m}} \bigg\| \int_{\bbR^{n_1}}\frac{f(y_1,\ldots, y_{m-1}) dy_1 }
{(\sum_{i=1}^{m-1} |x_i-y_i|+ |x_m|)^{\lambda}} \bigg\|_{L_{x_1}^{q_1}}dy_2\ldots dy_{m-1} \\
&\lesssim
 \int_{\bbR^{N_m-n_1-n_m}} \frac{\|f(\cdot,y_2,\ldots, y_{m-1})\|_{L_{y_1}^{p_1}} dy_2\ldots dy_{m-1}}
{(\sum_{i=2}^{m-1} |x_i-y_i|+ |x_m|)^{\lambda-n_1/p'_1-n_1/q_1}}.
\end{align*}
Similar arguments show that
\begin{align}
\hbox to 3em{}&\hskip -3em \left\|\int_{\bbR^{N_m-n_m}}\!\! \frac{f(y_1,\ldots, y_{m-1}) dy_1\ldots dy_{m-1}}
{(\sum_{i=1}^{m-1} |x_i-y_i|+ |x_m|)^{\lambda}} \right\|_{L_{(x_1,\ldots,x_{i_1-1})}^{(q_1,\ldots,q_{i_1-1})}} \nonumber  \\
&\lesssim
 \int_{\bbR^{n_{i_1}+\ldots+n_{m-1}}}  \frac{\|f(\ldots,y_{i_1},\ldots, y_{m-1})\|_{L_{(y_1,\ldots,y_{i_1-1})}^{(p_1,\ldots,p_{i_1-1})}}  dy_{i_1}\ldots dy_{m-1}}
{( \sum_{i=i_1}^{m-1}  |x_i - y_i|+ |x_m|)^{\lambda - \sum_{i=1}^{i_1-1} (n_i/p'_i + n_i/q_i)}}. \label{eq:s:e6}
\end{align}

Define the operator $S$ by
\begin{equation}\label{eq:S:def}
  Sg(x_{i_1},\ldots,x_m) =   \int_{\bbR^{n_{i_1}+\ldots+n_{m-1}}}  \frac{ g(y_{i_1},\ldots,y_{m-1})  dy_{i_1} \ldots dy_{m-1} }
{(  \sum_{i=i_1}^{m-1} |x_i-y_i|+|x_m|)^{\alpha}},
\end{equation}
where
$
  \alpha = \lambda - \sum_{i=1}^{i_1-1}  ( {n_i}/{p'_i} + {n_i}/{q_i} )$.
Let us prove that $S$ is bounded
from $L^{(p_{i_1},\ldots,p_{m-1})}$
to $L^{(q_{i_1},\ldots, q_m)}$.

For any $s>1$, define $t(s) =  n_m/(n_{i_1}/p'_{i_1} - n_{i_1}/s' + n_m/q_m)$.
Since $t(p_{i_1}) =q_m>1$,
there exist two numbers $s_0$ and $s_1$ such that
$1<s_0 < p_{i_1}< s_1 < q_{i_1}$ and   $t(s_l)\ge 1$ for $l=0,1$.

Let $\vec u_s = (s, p_{i_1+1}, \ldots, p_{m-1})$ and
$\vec v = (q_{i_1},\ldots,q_{m-1})$.
Applying Young's inequality many times, we get
\begin{equation}\label{eq:S}
\|Sg\|_{L_{(x_{i_1},\ldots,x_{m-1})}^{(q_{i_1},\ldots,q_{m-1})}}
\lesssim  \frac{ \|g\|_{L^{\vec u_{s_l}}}}{|x_m|^{n_m/t(s_l)}}.
\end{equation}
Hence $S$ is bounded from $L^{\vec u_{s_l}}$
to $L^{t(s_l),\infty}(L^{\vec v})$, $l=0,1$.

Since $s_0<p_{i_1}<s_1$, there is some $0<\theta<1$
such that $1/p_{i_1} = (1-\theta)/s_0 + \theta/s_1$.
On the other hand, it is easy to check that
$1/q_m = (1-\theta)/t(s_0) + \theta/t(s_1)$.
By the interpolation theorem (Proposition~\ref{prop:interpolation}),
$S$ is bounded from
$(L^{\vec u_{s_0}}, L^{\vec u_{s_1}})_{\theta,q_m}$
to
$(L^{t(s_0),\infty}(L^{\vec v}), L^{t(s_1),\infty}(L^{\vec v}))_{\theta,q_m}$.

Note that $p_i\le q_m$ for all $i_1\le i\le m-1$.
Applying Theorem~\ref{thm:mixed Lp}, we get
\[
   (L^{\vec u_{s_0}}, L^{\vec u_{s_1}})_{\theta,q_m}
 \hookleftarrow L^{(p_{i_1+1},\ldots,p_{m-1})}((L^{s_0},L^{s_1})_{\theta,q_m})
 \hookleftarrow L^{(p_{i_1},\ldots,p_{m-1})}.
\]
On the other hand, we see from
Corollary~\ref{Co:Lorentz} that
\[
  (L^{t(s_0),\infty}(L^{\vec v}), L^{t(s_1),\infty}(L^{\vec v}))_{\theta,q_m}
  = L^{q_m}(L^{\vec v}).
\]
Hence $S$ is bounded from $L^{(p_{i_1},\ldots,p_{m-1})}$
to $L^{q_m}(L^{\vec v})$.
It follows from (\ref{eq:s:e6}) that
\[
    \bigg\|\int_{\bbR^{N_m-n_m}}
   \frac{f(y_1,\ldots, y_{m-1}) dy_1\ldots dy_{m-1}}
{(\sum_{i=1}^{m-1} |x_i-y_i|+ |x_m|)^{\lambda}} \bigg\|_{L_{(x_1,\ldots,x_m)}^{(q_1,\ldots,q_m)}}
\lesssim   \|f\|_{L^{\vec {\tilde p}}}.
\]
Now  we see from Lemma~\ref{Lm:end points} that
$I_{\lambda}$ is bounded from $L^{\vec p}$
to $L^{\vec q}$.

(B3).\,\, $p_m>1$ and $q_m =\infty$.

In this case, $(\vec p, \vec q)$ meets (T3), which is equivalent to
$(\vec q\ \!', \vec p\ \!')$ meeting (T2). Now the conclusion follows from Case (B2) and
the duality.

(B4) $1<p_m=q_m<\infty$.

Let $m_1 = \min\{i:\, p_l=q_l,  i\le l\le m\}$.
We see from the definition of $\Gamma_{\lambda,m}$
that  $1<m_1\le m$, $1<p_i=q_i<\infty$ for all $m_1\le i\le m$
and $(p_1,\ldots, p_{m_1-1}$, $q_1$, $\ldots$, $q_{m_1-1})\in \Gamma_{\lambda - n_m - \ldots - n_{m_1}, m_1-1}$.

Note that $1\le p_{m_1-1}<q_{m_1-1}\le \infty$
and $(p_{m_1-1},q_{m_1-1})\ne (1,\infty)$.
There are three subcases.

(B4)(a)\,\, $1< p_{m_1-1}<q_{m_1-1}< \infty$.

For any $f\in L^{\vec p}$, since $p_i\le q_i$,
using Minkowski's and Young's inequalities many times, we get
that
\begin{align*}
&\left\|\int_{\bbR^{N_m}}\!\! \frac{f(y_1,\ldots, y_m) dy_1\ldots dy_m}
{(\sum_{i=1}^m |x_i-y_i|)^{\lambda}} \right\|_{L_{(x_1,\ldots,x_{m_1-2})}^{(q_1,\ldots,q_{m_1-2})}} \\
&\lesssim
 \int_{\bbR^{n_{m_1-1}+\ldots + n_m}} \frac{\|f(\ldots,y_{m_1-1},\ldots, y_m)\|_{L_{(y_1,\ldots,y_{m_1-2})}^{(p_1,\ldots,p_{m_1-2})}} dy_{m_1-1}\ldots dy_m}
{(\sum_{i=m_1-1}^{m} |x_i-y_i|
  )^{ n_{m_1-1}/p'_{m_1-1}+n_{m_1-1}/q_{m_1-1}
   + n_{m_1} + \ldots + n_{m} }}\\
&\lesssim
 \int_{\bbR^{n_{m_1-1}}} \frac{
   M_{y_{m_1}}\ldots M_{y_{m}}
   \|f(,\ldots,y_{m_1-1},x_{m_1},\ldots, x_m)\|_{L_{(y_1,\ldots,y_{m_1-2})}^{(p_1,\ldots,p_{m_1-2})}} dy_{m_1-1} }
{|x_{m_1-1}-y_{m_1-1}|^{ n_{m_1-1}/p'_{m_1-1}+n_{m_1-1}/q_{m_1-1} }}   .
\end{align*}
By Proposition~\ref{prop:fractional}, we have
\begin{align*}
&\bigg\|\Big\|\int_{\bbR^{N_m}}\!\! \frac{f(y_1,\ldots, y_m) dy_1\ldots dy_m}
{(\sum_{i=1}^m |x_i-y_i|)^{\lambda}} \Big\|_{L_{(x_1,\ldots,x_{m_1-2})}^{(q_1,\ldots,q_{m_1-2})}}
\bigg\|_{L_{x_{m_1-1}}^{q_{m_1-1}}} \\
&\lesssim
 \Big \|
   M_{y_{m_1}}\ldots M_{y_{m}}
   \|f(,\ldots,y_{m_1-1},x_{m_1},\ldots, x_m)\|_{L_{(y_1,\ldots,y_{m_1-2})}^{(p_1,\ldots,p_{m_1-2})}}
   \Big\|_{L_{y_{m_1-1}}^{p_{m_1-1}}}.
\end{align*}
Recall that $p_l=q_l$ for all $m_1\le l\le m$. It follows from
(\ref{eq:Fefferman-Stein}) that
\[
  \|I_{\lambda} f\|_{L^{\vec q}}
  \lesssim \|f\|_{L^{\vec p}}.
\]

(B4)(b)\,\, $p_{m_1-1}>1$ and $q_{m_1-1}=\infty$.

Since $(p_1,\ldots, p_{m_1-1}$, $q_1$, $\ldots$, $q_{m_1-1})\in \Gamma_{\lambda - n_m - \ldots - n_{m_1}, m_1-1}$,
there is some $1\le v \le m_1-2 $ such that
$p_v <q_v<\infty$,
$q_i=p_i $ or $q_i=\infty$ for all $v+1 \le i\le m_1-2$, and $p_{m_1-1}\le q_i$ for all $v\le i\le m_1-1$.

It suffices to consider the case $n_{m_1-1} = n_v$.
To see this, assume that $I_{\lambda}$ is bounded when $n_{m_1-1}=n_v$.
For the case  $n_{m_1-1}<n_v$,   denote $x_v=(x_{v1}, x_{v2})$
and $y_v=(y_{v1}, y_{v2})$
with $x_{v1},y_{v1}\in\bbR^{n_v-n_{m_1-1}}$
and $x_{v2}, y_{v2} \in\bbR^{n_{m_1-1}}$.
Set
\begin{align*}
\vec s &= (p_1,\ldots,p_{v-1},p_v,p_v,p_{v+1},\ldots,p_m), \\
\vec t &= (q_1,\ldots,q_{v-1},q_v,q_v,q_{v+1},\ldots,q_m).
\end{align*}
Then $(\vec s, \vec t)\in \Gamma_{\lambda,m+1}$
and
\[
    L^{\vec p}(\bbR^{n_1}\!\times\! \ldots \!\times\! \bbR^{n_m})
  = L^{\vec s}
  (\bbR^{n_1}\!\times\! \ldots\!\times\!  \bbR^{n_{v-1}}
  \!\times\! \bbR^{n_v - n_{m_1-1}}
  \!\times\! \bbR^{n_{m_1-1}}
  \!\times\!  \bbR^{n_{v+1}}\ldots  \!\times\! \bbR^{n_m}).
\]
Note that
$s_i = t_i$ for all $m_1+1\le i\le m+1$.
We have
$(s_1,\ldots, s_{m_1}$, $t_1$, $\ldots$, $t_{m_1})\in \Gamma_{\lambda - n_m - \ldots -
n_{ m_1}, m_1}$.

On the other hand, since $s_{i+1} = p_i$
and $t_{i+1}= q_i$ for all $v\le i\le m$,
by setting $\bar v=v+1$ and $\bar m_1 = m_1+1$,
we have
$s_{\bar v} <t_{\bar v}<\infty$,
$t_i=s_i $ or $t_i=\infty$ for all $\bar v+1 \le i\le \bar m_1-2$,
and $s_{\bar m_1-1}\le t_i$ for all $\bar v  \le i\le \bar m_1-1$.
Moreover, for any $ (y_1,\ldots,y_{v-1},y_{v1},y_{v2},y_{v+1},\ldots,y_m)\in \bbR^{N_m}$, the dimensions of the $\bar v$-th and the $(\bar m_1-1)$-th
components are equal.
By the assumption, $I_{\lambda}$ is bounded from $L^{\vec s}$ to $L^{\vec t}$. That is, $I_{\lambda}$ is bounded from
$L^{\vec p}$ to $L^{\vec q}$.

And for the case $n_v<n_{m_1-1}$, by denoting $x_{m_1-1}=(x_{m_1-1,1}, x_{m_1-1,2})$
with $x_{m_1-1,1}\in\bbR^{n_{m_1-1}-n_v}$
and $x_{m_1-1,2} \in\bbR^{n_v}$, similar arguments show that $I_{\lambda}$ is bounded.

Now we suppose that $n_v = n_{m_1-1}$ and
  prove the conclusion by induction on $m-m_1$. First, we
consider the case $m-m_1=0$, i.e., $m=m_1$.

Recall that $1<p_{m-1} \le q_v <\infty$ and
$n_{m-1}=n_v$.  There exist numbers
  $\tilde p_{m-1}$, $\tilde{\tilde{p}}_{m-1}$,
  $ \tilde q_v$
and  $ \tilde{\tilde q}_v$ such that
\begin{align*}
&1< \tilde p_{m-1} \le \tilde q_v <\infty,
\qquad
 1 < \tilde{\tilde{p}}_{m-1}\le \tilde{\tilde q}_v<\infty, \\
& \tilde p_{m-1}<p_{m-1}<\tilde{\tilde{p}}_{m-1} ,
\qquad  p_v< \tilde q_v < q_v < \tilde{\tilde q}_v, \\
&  \sum_{\substack{1\le i\le m \\ i\ne m -1}} \frac{n_i}{p_i}
    + \frac{n_{m-1}}{\tilde p_{m-1}}
    =  \sum_{\substack{1\le i\le m \\ i\ne v}} \frac{n_i}{q_i}
       + \frac{n_v}{\tilde q_v} + N_m-\lambda,\\
&  \sum_{\substack{1\le i\le m \\ i\ne m -1}} \frac{n_i}{p_i}
    + \frac{n_{m-1}}{\tilde{\tilde p}_{m-1}}
    =  \sum_{\substack{1\le i\le m \\ i\ne v}} \frac{n_i}{q_i}
       + \frac{n_v}{\tilde{\tilde q}_v} + N_m-\lambda.
\end{align*}
For any  $0<\theta_0\ne \theta_1<1$, set
\[
    \frac{1}{p_{\theta_i}}
    = \frac{1-\theta_i}{\tilde {p}_{m-1}}
      +\frac{\theta_i}{\tilde{\tilde {p}}_{m-1}}
   \mbox{\quad and \quad } \frac{1}{q_{\theta_i}}
    = \frac{1-\theta_i}{\tilde {q}_v}
      +\frac{\theta_i}{\tilde{\tilde {q}}_v},\quad i=0,1.
\]
We have
\[
 \sum_{\substack{1\le j\le m \\ j\ne m -1}} \frac{n_j}{p_j}
    + \frac{n_{m-1}}{p_{\theta_i}}
    =  \sum_{\substack{1\le j\le m \\ j\ne v}} \frac{n_j}{q_j}
       + \frac{n_v}{q_{\theta_i}} + N_m-\lambda,\quad i=0,1.
\]

By (\ref{eq:Fefferman-Stein}), the partial maximal operator $M_{y_m}\! $ is bounded
on $L_{y_m}^{p_m}\!(L_{y_{m-1}}^{\tilde p_{m-1}})$
and $L_{y_m}^{p_m}(L_{y_{m-1}}^{\tilde {\tilde p}_{m-1}})$,
respectively.
By  Proposition~\ref{prop:interpolation},
$M_{y_m}$ is bounded on
$(L^{p_m}(L^{\tilde{p}_{m-1}}),
L^{p_m}(L^{\tilde{\tilde{p}}_{m-1}}))_{\theta_i,1}$
for any $0<\theta_i <1$,  $i=0,1$.

On the other hand, by
 Theorem~\ref{thm:mixed Lp}
and Proposition
\ref{prop:interpolation Lorentz a},
\[
  (L^{p_m}(L^{\tilde{p}_{m-1}}),
L^{p_m}(L^{\tilde{\tilde{p}}_{m-1}}))_{\theta_i,1}
\hookrightarrow
 L^{p_m}\big((L^{\tilde{p}_{m-1}},
 L^{\tilde{\tilde{p}}_{m-1}})_{\theta_i,1}\big)
 =
 L^{p_m}(L^{p_{\theta_i},1}).
\]
Hence for any $h\in (L^{p_m}(L^{\tilde{p}_{m-1}}),
L^{p_m}(L^{\tilde{\tilde{p}}_{m-1}}))_{\theta_i,1}$, we have
\begin{equation}\label{eq:ea:a13}
    \| M_{y_m} h(y_{m-1},x_m)\|_{L_{x_m}^{p_m}(L_{y_{m-1}}^{p_{\theta_i},1})}
  \lesssim
  \|h\|_{(L^{p_m}(L^{\tilde{p}_{m-1}}),
L^{p_m}(L^{\tilde{\tilde{p}}_{m-1}}))_{\theta_i,1}}.
\end{equation}

As in Case (B4)(a),
for
  any $f\in (L^{p_m}(L^{\tilde{p}_{m-1}}),
L^{p_m}(L^{\tilde{\tilde{p}}_{m-1}}))_{\theta_i,1}(\mathcal B)
$,
where   $\mathcal B=L^{(p_1,\ldots,p_{m-2})}$ and $i$ equals $0$ or $1$,
 we have
\begin{align}
&\bigg\|\int_{\bbR^{N_m}}\!\! \frac{f(y_1,\ldots, y_m) dy_1\ldots dy_m}
{(\sum_{j=1}^m |x_j-y_j|)^{\lambda}} \bigg\|_{L_{(x_1,\ldots,x_{m -2})}^{(q_1,\ldots,q_{v-1},q_{\theta_i},q_{v+1},\ldots,q_{m -2})}} \nonumber \\
&\lesssim
 \int_{\bbR^{n_{m-1}+ n_m}} \frac{\|f(\ldots,y_{m-1},y_m)\|_{L_{(y_1,\ldots,y_{m-2})
   }^{(p_1,\ldots,p_{m-2})}} dy_{m-1}  dy_m}
{( |x_{m-1}-y_{m-1}| + |x_m-y_m|
  )^{ n_{m-1}/p'_{\theta_i}
   +   n_{m} }}\nonumber \\
&\lesssim
 \int_{\bbR^{n_{m-1}}} \frac{M_{y_m}\|f(\ldots,y_{m-1},x_m)\|_{L_{(y_1,\ldots,y_{m-2})
   }^{(p_1,\ldots,p_{m-2})}} dy_{m-1}   }
{|x_{m-1}-y_{m-1}|^{ n_{m-1}/p'_{\theta_i}
    }}\nonumber \\
&\lesssim
 \Big\|M_{y_m}\|f(\ldots,y_{m-1},x_m)\|_{L_{(y_1,\ldots,y_{m-2})
   }^{(p_1,\ldots,p_{m-2})}} \Big\|_{L_{y_{m-1}}^{p_{\theta_i},1}}
\Big\|
{|x_{m-1}-y_{m-1}|^{- \frac{n_{m-1}}{p'_{\theta_i}}
    }} \Big\|_{L_{y_{m-1}}^{p'_{\theta_i},\infty}}
   \nonumber \\
&\approx
 \Big\|M_{y_m}\|f(\ldots,y_{m-1},x_m)\|_{L_{(y_1,\ldots,y_{m-2})
   }^{(p_1,\ldots,p_{m-2})}} \Big\|_{L_{y_{m-1}}^{p_{\theta_i},1}}
    ,\qquad \forall x_{m-1}. \label{eq:ea:a9}
\end{align}
It follows from (\ref{eq:ea:a13}) that
\begin{align*}
&\Big\|\int_{\bbR^{N_m}}\!\! \frac{f(y_1,\ldots, y_m) dy_1\ldots dy_m}
{(\sum_{j=1}^m |x_j-y_j|)^{\lambda}} \Big\|_{L^{(q_1,\ldots,q_{v-1},q_{\theta_i},q_{v+1},\ldots,q_{m-2},\infty,q_m)} } \\
&\lesssim  \Big\|M_{y_m}\|f(\ldots,y_{m-1},x_m)\|_{L_{(y_1,\ldots,y_{m-2})
   }^{(p_1,\ldots,p_{m-2})}} \Big\|_{L_{x_m}^{p_m}(L_{y_{m-1}}^{p_{\theta_i},1})}\\
&\lesssim
  \|f\|_{(L^{p_m}(L^{\tilde{p}_{m-1}}),
L^{p_m}(L^{\tilde{\tilde{p}}_{m-1}}))_{\theta_i,1}(\mathcal B)
  },\quad i=0,1.
\end{align*}
By Proposition~\ref{prop:vector a},
\[
  (L^{p_m}(L^{\tilde{p}_{m-1}}),
L^{p_m}(L^{\tilde{\tilde{p}}_{m-1}}))_{\theta_i,1}
(\mathcal B)= (L^{p_m}(L^{\tilde{p}_{m-1}}(\mathcal B)),
L^{p_m}(L^{\tilde{\tilde{p}}_{m-1}}(\mathcal B)))_{\theta_i,1}.
\]
Hence
for any $f\in (L^{p_m}(L^{\tilde{p}_{m-1}}(\mathcal B)),
L^{p_m}(L^{\tilde{\tilde{p}}_{m-1}}(\mathcal B)))_{\theta_i,1}$,
\begin{align*}
&\bigg \|\int_{\bbR^{N_m}}\!\! \frac{f(y_1,\ldots, y_m) dy_1\ldots dy_m}
{(\sum_{j=1}^m |x_j-y_j|)^{\lambda}} \bigg\|_{L^{(q_1,\ldots,q_{v-1},q_{\theta_i},q_{v+1},
      \ldots,q_{m-2},\infty,q_m)} } \\
&\lesssim
  \|f\|_{(L^{p_m}(L^{\tilde{p}_{m-1}}(\mathcal B)),
L^{p_m}(L^{\tilde{\tilde{p}}_{m-1}}(\mathcal B)))_{\theta_i,1}
  },\qquad i=0,1.
\end{align*}

Choose $\theta, \theta_0, \theta_1\in (0,1)$ such that $p_{\theta_0}< p_{m-1}<p_{\theta_1}$ and
\[
  \frac{1}{p_{m-1}} = \frac{1-\theta}{p_{\theta_0}}
   + \frac{ \theta}{p_{\theta_1}}.
\]
Denote the interpolation couple $\big (L^{p_m}(L^{\tilde{p}_{m-1}}(\mathcal B)),
L^{p_m}(L^{\tilde{\tilde{p}}_{m-1}}(\mathcal B))\big)$
by $\overline{\mathcal A}$.
Applying Proposition~\ref{prop:interpolation} again, we get that $I_{\lambda}$ is bounded from
\[
  \Big(\overline{\mathcal A}_{\theta_0,1},
\overline{\mathcal A}_{\theta_1,1}
  \Big)_{\theta,p_{m-1}}
\]
to
\[
  \Big(L^{(q_1,\ldots,q_{v-1},q_{\theta_0},q_{v+1},\ldots,q_{m-2},\infty,q_m)},
  L^{(q_1,\ldots,q_{v-1},q_{\theta_1},q_{v+1},\ldots,q_{m-2},\infty,q_m)} \Big)_{\theta,p_{m-1}}.
\]

By the reiteration theorem \cite[Theorem 3.5.3]{Bergh1976}, when $p_m=p_{m-1}$,
\begin{align*}
&  \Big(\overline{\mathcal A}_{\theta_0,1},
\overline{\mathcal A}_{\theta_1,1}
  \Big)_{\theta,p_{m-1}}
 =\overline{\mathcal A}_{(1-\theta)\theta_0 + \theta\theta_1,p_{m-1}}
    =L^{\vec p},
\end{align*}
where we use Proposition~\ref{prop:vector a} and
Theorem~\ref{thm:mixed Lp} in the last step.
On the other hand,
note that $1/q_v = (1-\theta)/q_{\theta_0} + \theta/q_{\theta_1}$
and  $q_m=p_{m-1}\le q_i$ for any $v\le i \le m-1$.
Applying Proposition~\ref{prop:vector a} and
Theorem~\ref{thm:mixed Lp}  again, we get
\begin{align*}
&\Big(L^{(q_1,\ldots,q_{v-1},q_{\theta_0},q_{v+1},
\ldots,q_{m-2},\infty,q_m)},
  L^{(q_1,\ldots,q_{v-1},q_{\theta_1},q_{v+1},
  \ldots,q_{m-2},\infty,q_m)} \Big)_{\theta,p_{m-1}} \\
&=\Big(L^{(q_{\theta_0},q_{v+1},
\ldots,q_{m-2},\infty,q_m)}(L^{(q_1,\ldots,q_{v-1})}),
  L^{( q_{\theta_1},q_{v+1},
  \ldots,q_{m-2},\infty,q_m)}(L^{(q_1,\ldots,q_{v-1})}) \Big)_{\theta,p_{m-1}} \\
&
\hookrightarrow
L^{(q_v,q_{v+1},
\ldots,q_{m-2},\infty,q_m)}(L^{(q_1,\ldots,q_{v-1})})\\
&
=
 L^{\vec q}.
\end{align*}
Hence $I_{\lambda}$ is bounded from $L^{\vec p}$ to $L^{\vec q}$
when $p_m=p_{m-1}$.

Next we study the case $p_m\ne p_{m-1}$.
For any $x_m\ne y_m$ and $h\in L^{(p_1,\ldots,p_{m-1})}$,
  define the operator $T_{x_m,y_m}$ by
\begin{equation}\label{eq:vec K}
  T_{x_m,y_m} h(x_1,\ldots,x_{m-1}) =  \int_{\bbR^{N_m-n_m}}
   \frac{h(y_1,\ldots,y_{m-1}) dy_1\ldots dy_{m-1}}
   {(\sum_{i=1}^{m-1}|x_i-y_i| + |x_m-y_m|)^{\lambda}}.
\end{equation}
Since $(p_1,\ldots,p_{m-1},q_1,\ldots,q_{m-1})\in\Gamma_{\lambda-n_m,m-1}$
and
\[
  |T_{x_m,y_m} h(x_1,\ldots,x_{m-1})| \le   \int_{\bbR^{N_m-n_m}}
   \frac{|h(y_1,\ldots,y_{m-1})| dy_1\ldots dy_{m-1}}
   {(\sum_{i=1}^{m-1}|x_i-y_i| )^{\lambda-n_m}
    |x_m-y_m|^{n_m}},
\]
we see from Case (B3) that $T_{x_m,y_m}$ is bounded from
$L^{(p_1,\ldots,p_{m-1})}$
to $L^{(q_1,\ldots,q_{m-1})}$.

On the other hand, when $|x_m-y_m|\ge 2|y_m-z_m|$, we have
\begin{align*}
 &| (T_{x_m,y_m} - T_{x_m,z_m})h(x_1,\ldots,x_{m-1})| \\
& \le  \int_{\bbR^{N_m-n_m}}
   \bigg|\frac{1}{(\sum_{i=1}^{m-1} |x_i-y_i| + |x_m-y_m|)^{\lambda}}
   -  \frac{1}{(\sum_{i=1}^{m-1} |x_i-y_i| + |x_m-z_m |)^{\lambda}}
    \bigg| \\
&\qquad \times
   |h(y_1,\ldots,y_{m-1})| dy_1\ldots dy_{m-1} \\
&=\lambda   \int_{\bbR^{N_m-n_m}}
   \frac{\Big||x_m-y_m| - |x_m-z_m|\Big|\cdot |h(y_1,\ldots,y_{m-1})| dy_1\ldots dy_{m-1}}{(\sum_{i=1}^{m-1} |x_i-y_i| + \xi)^{\lambda+1}}
    \\
& \le \lambda  \int_{\bbR^{N_m-n_m}}
   \frac{|y_m-z_m| \cdot |h(y_1,\ldots,y_{m-1})| dy_1\ldots dy_{m-1}}{(\sum_{i=1}^{m-1} |x_i-y_i| + \xi)^{\lambda+1}}
    ,
\end{align*}
where we use Lagrange's mean value theorem and $\xi$ is between
$|x_m-z_m|$ and $|x_m-y_m|$. Since $|x_m-y_m|\ge 2|y_m-z_m|$, we have $ \xi \ge |x_m-y_m|/2$. Hence
\begin{align*}
 &| (T_{x_m,y_m} - T_{x_m,z_m})h(x_1,\ldots,x_{m-1})| \\
& \le \lambda 2^{\lambda+1}  \int_{\bbR^{N_m-n_m}}
   \frac{|y_m-z_m| \cdot |h(y_1,\ldots,y_{m-1})| dy_1\ldots dy_{m-1}}{(\sum_{i=1}^{m-1} |x_i-y_i|)^{\lambda-n_m} |x_m-y_m|^{n_m+1}}.
\end{align*}
It follows that
\[
  \|T_{x_m,y_m} - T_{x_m,z_m}\|_{L^{(p_1,\ldots,p_{m-1})}\rightarrow
    L^{(q_1,\ldots,q_{m-1})}}
 \lesssim  \frac{   |y_m-z_m|}{|x_m-y_m|^{n_m+1}}.
\]
Hence there is some constant $C$ such that for any
$y_m\ne z_m$,
\begin{align*}
 \int_{|x_m-y_m|\ge 2|y_m-z_m|}
 \|T_{x_m,y_m} - T_{x_m,z_m}\|_{L^{(p_1,\ldots,p_{m-1})}\rightarrow
    L^{(q_1,\ldots,q_{m-1})}}
   dx_m \le  C.
\end{align*}
Similarly we get
\begin{align*}
 \int_{|x_m-y_m|\ge 2|x_m-z_m|}
 \|T_{x_m,y_m} - T_{z_m,y_m}\|_{L^{(p_1,\ldots,p_{m-1})}\rightarrow
    L^{(q_1,\ldots,q_{m-1})}}
   dy_m \le  C .
\end{align*}

Recall that $I_{\lambda}$ is bounded from $L^{\vec p}$ to
$L^{\vec q}$ whenever $p_m = q_m = p_{m-1}$.
By Proposition~\ref{prop:singular T},
$I_{\lambda}$ is bounded from $L^{\vec p}$ to
$L^{\vec q}$ whenever $1<p_m = q_m<\infty$.

Now assume that for some $k\ge 0$, $I_{\lambda}$ is bounded from $L^{\vec p}$
to $L^{\vec q}$ when $m-m_1 = k\ge 0$,
$(\vec p, \vec q)\in\Gamma_{\lambda,m}$
satisfying $p_i=q_i$ for all $m_1\le i\le m$, $p_{m_1-1}>1$
and $q_{m_1-1}=\infty$.

Let us prove that $I_{\lambda}$ is also bounded  when $m-m_1 = k+1$.

First, we study the case $p_m=p_{m-1}$. Set $\tilde x_{m-1}
= (x_{m-1}, x_m)$ and  $\tilde y_{m-1}
= (y_{m-1}, y_m)$. We have
\begin{align*}
I_{\lambda} f(x_1, \ldots,x_m)
 & = \int_{\bbR^{N_m}} \frac{f(y_1,\ldots,y_m) dy_1\ldots dy_m}
   {(\sum_{i=1}^m |x_i-y_i|)^{\lambda}} \\
&\approx
I_{\lambda} f(x_1, \ldots,x_{m-2}, \tilde x_{m-1}) \\
 & = \int_{\bbR^{N_m}} \frac{f(y_1,\ldots,y_{m-2}, \tilde y_{m-1}) dy_1\ldots dy_{m-2} d\tilde y_{m-1}}
   {(\sum_{i=1}^{m-2} |x_i-y_i| + |\tilde x_{m-1} - \tilde y_{m-1}|)^{\lambda}} .
\end{align*}
By the inductive assumption, we get
\begin{align*}
\|I_{\lambda}f\|_{L^{\vec q}}
& \approx \|I_{\lambda} f(x_1, \ldots,x_{m-2}, \tilde x_{m-1})\|_{L_{(x_1, \ldots,x_{m-2}, \tilde x_{m-1})}^{
  (q_1, \ldots,q_{m-2}, q_{m-1})}} \\
&\lesssim  \| f(y_1, \ldots,y_{m-2}, \tilde y_{m-1})\|_{L_{(y_1, \ldots,y_{m-2}, \tilde y_{m-1})}^{
  (p_1, \ldots,p_{m-2}, p_{m-1})}} \\
&=  \|f\|_{L^{\vec p}},\qquad \forall f\in L^{\vec p}.
\end{align*}
Hence $I_{\lambda}$ is bounded from $L^{\vec p}$ to
$L^{\vec q}$ when $p_m = p_{m-1}$.

For the case $p_m\ne p_{m-1}$, define the operator $T_{x_m,y_m}$ by (\ref{eq:vec K}).
With almost verbatim proof as the case $m-m_1=0$ except that the boundedness of
$T_{x_m,y_m}$ from $L^{(p_1,\ldots,p_{m-1})}$ to $L^{(q_1,\ldots,q_{m-1})}$ is now deduced from the inductive assumption other than Case (B3), we get that
$I_{\lambda}$ is bounded from $L^{\vec p}$ to
$L^{\vec q}$.

(B4)(c)\,\, $p_{m_1-1}=1$ and $q_{m_1-1}<\infty$.

We see from Case (B4)(b) that $I_{\lambda}$
is bounded from $L^{\vec q\ \!\!'}$ to $L^{\vec p\ \!\!'}$.
Now the conclusion follows by the duality.
This completes the proof.

\subsection{Proof of Theorem~\ref{thm:HLS}}

Since $(\vec p, \vec q)\in \Omega_m$
is equivalent to
$(\vec p, \vec q\ \!' ) \in \Gamma_{\lambda,m}$
with $\lambda = \sum_{i=1}^m (n_i/p'_i+n_i/q'_i)$,
the sufficiency follows from Theorem~\ref{thm:mixed fractional}.

For the necessity, we only need to show that
$p_i, q_i\ge 1$ for all $1\le i\le m$.

Assume   $0<p_{i_0}<1$ for some $1\le i_0\le m$.
Fix some $\tilde h \in L^{(p_1,\ldots,p_{i_0-1},p_{i_0+1},\ldots,
p_m)}$ and $g\in L^{\vec q}$ which are non-negative and not identical to $0$.
Denote $\tilde y = (y_1$, $\ldots$, $y_{i_0-1}$, $y_{i_0+1},\ldots,y_m)$.
For any $h\in L^{p_{i_0}}$, we have
$  h(y_{i_0})
  \tilde h(\tilde y) \in L^{\vec p}$.
Let
\[
  \mu (h) =
\int_{\bbR^{2N_m}}\!\!\!\! \frac{h(y_{i_0})
  \tilde h(\tilde y) g(x_1,\ldots,x_m)
     dy_1\ldots dy_m dx_1\ldots dx_m}
     {(\sum_{i=1}^m|x_i-y_i|)^{\sum_{i=1}^m (n_i/p'_i+n_i/q'_i)}} .
\]
We have
\[
  |\mu(h)| \lesssim \|\tilde h\|_{L^{(p_1,\ldots,p_{i_0-1},p_{i_0+1},\ldots,
  p_m)}} \|g\|_{L^{\vec q}}\cdot \|h\|_{L^{p_{i_0}}},
  \qquad \forall h\in L^{p_{i_0}}.
\]
That is, $\mu$ is a non-zero
bounded linear functional on $L^{p_{i_0}}$,
which contradicts the fact that $(L^{p_{i_0}})^*=\{0\}$ (see
\cite[Theorem 1.4.16]{Grafakos2014}).

Since $\vec p$ and $\vec q$ are symmetric, we also have $q_i\ge 1$
for all $1\le i\le m$.
This completes the proof.

\footnotesize


\end{document}